\documentclass{article}

\usepackage{amsthm,amsmath,amssymb}
\usepackage[numbers]{natbib}

\usepackage{hyperref}
\hypersetup{
    colorlinks,%
    citecolor=black,%
    filecolor=black,%
    linkcolor=black,%
    urlcolor=black
}
\usepackage{hypernat}
\usepackage[top=2.5cm, bottom=2.5cm, left=2.8cm,
right=2.8cm]{geometry} 
\usepackage{verbatim}
\usepackage{graphicx}
\usepackage{caption}
\usepackage{subcaption}

\linespread{1.4}

\newtheorem{theorem}{Theorem}[section]

\newtheorem{lemma}[theorem]{Lemma}
\newtheorem{corollary}[theorem]{Corollary}
\newtheorem{remark}{Remark}[section]
\newtheorem{example}[theorem]{Example}

\newcommand{\Q}{{\mathbb Q}}

\newcommand{\R}{{\mathbb R}}

\newcommand{\C}{{\mathcal{C}}}

\newcommand{\jr}{{\mathcal{R}}}

\newcommand{\Ps}{{\mathcal{P}}}

\newcommand{\samp}{{\mathcal{X}}}

\newcommand{\bbC}{\mathbb{C}}

\newcounter{rcnt}[section]

\def\qt#1{\qquad\text{#1}}

\setlength{\parskip}{2 \medskipamount}

\begin{document}

\title{Sharp Inequalities for $f$-divergences} 

\author{Adityanand Guntuboyina and Sujayam Saha and Geoffrey
  Schiebinger \\ University of California, Berkeley}

\maketitle

\begin{abstract}
\makebox{$f$-divergence}s are a general class of divergences between probability
measures which include as special cases many commonly used divergences
in probability, 
mathematical statistics and information theory such as
Kullback-Leibler divergence, chi-squared divergence, squared Hellinger
distance, total variation distance etc. In this
paper, we study the problem of
maximizing or minimizing an \makebox{$f$-divergence} between two probability
measures subject to a finite number of constraints on other
$f$-divergences. We show that these infinite-dimensional optimization
problems can all be reduced to optimization problems over small finite
dimensional spaces which are tractable. Our results lead to a
comprehensive and unified treatment of the problem of obtaining sharp
inequalities between \makebox{$f$-divergences}. We demonstrate that many of the
existing results on inequalities between $f$-divergences can be
obtained as special cases of our results and we also improve on some
existing non-sharp inequalities. 
\end{abstract}

\section{Introduction}
%
%
%
%

Suppose that the Kullback-Leibler divergence between
two probability measures is bounded from above by 2. What then is the
maximum possible value of  the Hellinger distance between them? Such
questions naturally arise in many fields including mathematical
statistics and machine learning, information theory, probability,
statistical physics etc. and the goal of this paper is to 
provide a way of answering them. From the variational viewpoint,
this problem can be posed as: maximize the Hellinger
distance subject to a constraint on the Kullback-Leibler divergence
over the space of all pairs of probability measures \textit{over all 
possible sample spaces}. We shall prove in this paper that the value of
this maximization problem remains unchanged if one restricts the
sample space to be the three-element set $\{1, 2, 3\}$. In
other words, in order to find the maximum Hellinger distance subject
to an upper bound on the Kullback-Leibler divergence, one can just
restrict attention to pairs of probability measures on $\{1, 2,
3\}$. Thus, the large infinite-dimensional optimization problem is
reduced to an optimization problem over a small finite-dimensional
space (of dimension $\leq 4$) which makes it tractable.  

In this paper, we prove such results in a very general
setting. The Kullback-Leibler divergence and the (square of the)
Hellinger distance are special instances of a general class of
divergences between probability measures called $f$-divergences (also
known as $\phi$-divergences). Let $f:(0,
\infty) \rightarrow \R$ be a convex function satisfying $f(1) = 0$. By
virtue of convexity, both the limits $f(0) := \lim_{x \downarrow 0}
f(x)$ and $f'(\infty) := \lim_{x \uparrow \infty} f(x)/x$ exist,
although they may equal $+\infty$. For two probability measures $P$
and $Q$, the $f$-divergence (see, for  example,~\cite{AliSilvey,
  Csiszar66, Csiszar67, Csiszar67fdiv}),
 $D_f(P||Q)$, is defined by      
\begin{equation*}
D_f(P||Q)  :=  \int_{q > 0} f\left( \frac{p}{q} \right) dQ +
f'(\infty)P\{q = 0\}
\end{equation*}
where $p$ and $q$ are densities of $P$ and $Q$ with respect to a
common measure $\lambda$. The definition does not depend on the choice
of the dominating measure $\lambda$. Special cases of $f$ lead to,
among others, Kullback-Leibler divergence, total variation distance,
square of the Hellinger distance and chi-squared divergence. 

We are now ready to introduce the general form of the optimization
problem we described at the beginning of the paper. Given divergences
$D_f$ and $D_{f_i}, i = 1, \dots, m$ and nonnegative real numbers
$D_1, \dots, D_m$, let  
\begin{equation*}
  A(D_1, \dots, D_m) := \sup \left\{D_f(P||Q) : D_{f_i}(P||Q)
    \leq D_i ~\forall  i\right\}
\end{equation*}
and 
\begin{equation*}
  B(D_1, \dots, D_m) := \inf \left\{D_f(P||Q) : D_{f_i}(P||Q)
    \geq D_i ~\forall  i\right\}
\end{equation*}
where the probability measures on the right hand sides above range
over all possible measurable spaces. The goal of this paper is to
provide a method for computing these quantities. We show that these
large infinite-dimensional optimization problems can all be reduced to 
optimization problems over small finite-dimensional
spaces. Specifically, in Theorem~\ref{main}, we show that in order to
compute these quantities, one can restrict attention to probability
measures on the set $\{1, \dots, m+2\}$.  

One of the main reasons for studying the quantities $A(D_1, \dots,
D_m)$  and $B(D_1, \dots, D_m)$ is that they yield sharp inequalities
for the divergence $D_f$ in terms of the divergences $D_{f_1},
\dots, D_{f_m}$. Indeed, the inequalities 
\begin{equation}\label{si}
  D_f(P||Q) \leq A(D_{f_1}(P||Q), \dots, D_{f_m}(P||Q))
\end{equation}
and 
\begin{equation}\label{sj}
  D_f(P||Q) \geq B(D_{f_1}(P||Q), \dots, D_{f_m}(P||Q))
\end{equation}
hold for every pair of probability measures $P$ and $Q$. Further, the
functions $A$ and $B$ satisfy the natural monotonicity inequalities
\begin{equation}\label{ma}
  A(D_1, \dots, D_m) \leq A(D_1', \dots, D_m')
\end{equation}
and 
\begin{equation}\label{mb}
  B(D_1, \dots, D_m) \leq B(D_1', \dots, D_m')
\end{equation}
for every $(D_1, \dots, D_m)$ and $(D_1', \dots, D_m')$ such that $D_i
\leq D_i'$ for all $i$.  

The inequalities~\eqref{si} and~\eqref{sj} are sharp in the sense that
$A$ is the \textit{smallest} function satisfying~\eqref{ma} for
which~\eqref{si} holds for all probability measures $P$ and
$Q$. Likewise, $B$ is the \textit{largest} function
satisfying~\eqref{mb} for which~\eqref{sj} 
holds for all probability measures $P$ and $Q$.

Inequalities between $f$-divergences are useful in many areas. For
example, in mathematical statistics, they are
crucial in problems of obtaining minimax bounds \cite{Yu97lecam,
  Tsybakovbook, GuntuThesis, GuntuFdiv}. In probability, such
inequalities are often used for converting limit theorems proved under
a convenient divergence into limit theorems for other divergences
\cite{BarronCLT,   Topsoe79, Harremoes}. They are also helpful for
proving results in measure concentration \cite{MartonAnnProb,
  MartonGFA,   MartonBlowUp}. Some applications in machine learning
are described in~\cite{ReidWilliamsonJMLR}. Further, inequalities
involving $f$-divergences are
fundamental to the field of information theory \cite{CoverThomas,
  CsiszarShields}.  

Because of their widespread use, many papers deal with inequalities 
between $f$-divergences (some references being~\cite{PinskerIneq,
  Csiszar66, Kullback67IEEE, kemperman69, Vajda70, GibbsSu,
  RefinePinsker, TopsoeCapDiv, Gilardoni06, GenPin,
  GuntuFdiv}). However, many of the inequalities presented in previous
treatments are not sharp. The few papers which
provide sharp inequalities~\cite{Vajda70, RefinePinsker, Gilardoni06,
  GenPin} only deal with certain special $f$-divergences 
as opposed to working in full generality. A popular such special case
is $m = 1$ and $D_{f_1}$ corresponding to the total variation
distance. In this case, sharp inequalities have been derived
in~\cite{RefinePinsker} for the case when $D_f$ is the 
Kullback-Leibler divergence and in~\cite{Gilardoni06} for the case of 
general $D_f$. The case $m > 1$ is comparatively less studied although
this has potential applications in the statistical problem of
obtaining lower bounds for the minimax risk (see Section~\ref{leca}
for details). The only paper which deals with sharp inequalities for
$m > 1$ is~\cite{GenPin} but there the authors only study the case
when $D_{f_1}, \dots, D_{f_m}$ are all primitive divergences (see
Remark~\ref{prim} below for the definition of primitive divergences).

In contrast with all previous papers in the area, we study the problem
of obtaining sharp inequalities between $f$-divergences in full
generality. In particular, our main results allow $m$ to be an
arbitrary positive integer and all
the divergences $D_f$ and $D_{f_1}, \dots, D_{f_m}$ to be arbitrary
$f$-divergences. We show that the underlying optimization problems can
all 
be reduced to low-dimensional optimization problems and we outline
methods for solving them. We also show that many of the existing
results on inequalities between $f$-divergences can be obtained as
special cases of our results and we also improve on some existing
non-sharp inequalities.  
  
The rest of this paper is structured as follows. Our main result is
stated in Theorem~\ref{main}. Its three-part proof is given
in Section~\ref{pruf}. The first part is based on a recent
representation theorem for 
$f$-divergences which implies that the optimization problems 
for computing $A(D_1, \dots, D_m)$ and $B(D_1, \dots, D_m)$ can be
thought of as maximizing or minimizing an integral functional over a
certain class of concave functions satisfying a finite number of
integral constraints. In the second part of the proof, we use
Choquet's theorem to restrict attention only to the extreme points of
the constraint set. Finally, in the third part, we characterize these
extreme points and show that they correspond to probability measures
over small finite sets.   

One possible approach to compute $A(D_1, \dots, D_m)$ and $B(D_1,
\dots, D_m)$ is via joint ranges of $f$-divergences. Specifically, for
$m \geq 1$ and divegences $D_{f_1}, \dots, D_{f_m}$, their joint
range, denoted by $\jr(f_1, \dots, f_m)$ is defined as the set of all
vectors in $\R^m$ that equal $(D_{f_1}(P||Q), \dots, D_{f_m}(P||Q))$
for some pair of probability measures $P$ and $Q$. If the joint range
$\jr(f, f_1, \dots, f_m)$ can be determined, then one can easily
calculate the values $A(D_1, \dots, D_m)$ and $B(D_1, \dots, D_m)$ for
every $D_1, \dots, D_m$. The problem of determining the joint range
$\jr(f_1, \dots, f_m)$ was solved for the case $m = 2$
in~\cite{HarremoesVajda}. We extend their result to general $m
\geq 2$ in Section~\ref{hash} by a very simple proof which was
communicated to us by an anonymous referee. Unfortunately, it turns
out that this approach based on the joint range does not quite prove
Theorem~\ref{main}. It gives a slightly weaker result. We discuss this
in Section~\ref{hash}.  

Also in Section~\ref{remex}, we collect some remarks and extensions of
our main theorem and, in particular, we show that the theorem is tight
in general. In Section~\ref{apsp}, we consider various special cases and
show that many well-known results in the literature can be obtained as
simple instances of our main theorem. In
Section~\ref{nucom}, we describe numerical methods for solving the
low-dimensional optimization problems that come out of our main
theorem. We solve an important subclass of these problems by
convex optimization and we also describe heuristic methods for the
general case. 

\section{Main Result}
For each $n \geq 1$, let $\Ps_n$ denote the space of all probability
measures defined on the finite set $\{1, \dots, n\}$. Let us define
$A_n(D_1, \dots, D_m)$ to be 
\begin{equation*}
  \sup \left\{D_f(P||Q): P, Q \in \Ps_n \text{ and } D_{f_i}(P||Q)
    \leq D_i ~\forall i \right\}
\end{equation*}
and, analogously, $B_n(D_1, \dots, D_m)$ to be 
\begin{equation*}
  \inf \left\{D_f(P||Q): P, Q \in \Ps_n \text{ and } D_{f_i}(P||Q)
    \geq D_i ~\forall i \right\}. 
\end{equation*}
Our main theorem is given below. The second part of the theorem
requires that $D_{f_1}, \dots, D_{f_m}$ are finite divergences. We say
that a divergence $D_f$ is finite if $\sup_{P, Q} D_f(P||Q) <
\infty$. The supremum here is taken over all probability measures over
all possible measurable spaces. See Remark~\ref{remfin} for a detailed
explanation of finite divergences. 
\begin{theorem}\label{main}
For every $D_1, \dots, D_m \geq 0$, we have 
  \begin{equation}\label{main.1}
    A(D_1, \dots, D_m) = A_{m+2}(D_1, \dots, D_m). 
  \end{equation}
Further if $D_{f_1}, \dots, D_{f_m}$ are all finite, then 
  \begin{equation}\label{main.2}
    B(D_1, \dots, D_m) = B_{m+2}(D_1, \dots, D_m). 
  \end{equation}
\end{theorem}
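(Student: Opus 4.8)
The plan is to follow a three-stage reduction that converts the infinite-dimensional problems defining $A$ and $B$ into a linear program over measures subject to finitely many constraints, and then to read off the dimension $m+2$ from the extreme points of the feasible set. First I would pass to a canonical, sufficiency-invariant parametrization of the pair $(P,Q)$. Writing $u = dP/d(P+Q) \in [0,1]$, the law $\mu$ of $u$ under $(P+Q)/2$ is a probability measure on $[0,1]$ whose barycenter is pinned down to $\int u\,d\mu = \tfrac12$ by the requirement that both $P$ and $Q$ be probability measures. The representation theorem alluded to in the introduction says that every $f$-divergence is then an \emph{affine} integral functional of $\mu$: one has $D_f(P\|Q) = \int g_f\,d\mu$ for an explicit lower semicontinuous convex kernel $g_f$ obtained from $f$ by a perspective transform, whose endpoint values encode $f(0)$ and $f'(\infty)$ (and may be $+\infty$). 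Consequently, computing $A(D_1,\dots,D_m)$ and $B(D_1,\dots,D_m)$ becomes maximizing or minimizing the linear functional $\mu \mapsto \int g_f\,d\mu$ over the convex set
\[
  \mathcal{C} = \Bigl\{\, \mu \ge 0 : \textstyle\int d\mu = 1,\ \int u\,d\mu = \tfrac12,\ \int g_{f_i}\,d\mu \le D_i\ (\text{resp. } \ge D_i),\ i=1,\dots,m \,\Bigr\}.
\]

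Second, I would establish that $\mathcal{C}$ is convex and weak-$*$ compact and then invoke Choquet's theorem (or, more directly, the Bauer maximum principle) to conclude that the extremal value of the affine objective is attained at an extreme point of $\mathcal{C}$: each $\mu \in \mathcal{C}$ is the barycenter of a probability measure on $\mathrm{ext}(\mathcal{C})$, so by affineness the objective at $\mu$ is an average of its values at extreme points, whence the supremum (resp. infimum) over $\mathcal{C}$ equals that over $\mathrm{ext}(\mathcal{C})$.

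Third, I would characterize these extreme points. The set $\mathcal{C}$ is cut out from the probability measures by one barycenter equality together with the $m$ divergence inequalities, of which some number $\ell \le m$ are active at a given extreme point $\mu$; removing the inactive (strict) inequalities, $\mu$ is an extreme point of the measures satisfying the resulting $2+\ell$ linear equalities (mass, barycenter, and the active constraints). Invoking the classical fact that an extreme point of a set of measures defined by $k$ linear equality constraints is supported on at most $k$ points, the count $k = 2 + \ell \le m+2$ shows that every extreme point of $\mathcal{C}$ is supported on at most $m+2$ atoms. A measure on $[0,1]$ with at most $m+2$ atoms is exactly the canonical parametrization of a pair $(P,Q)$ supported on a set of size $\le m+2$, i.e. a member of $\Ps_{m+2}$; translating back yields \eqref{main.1} and \eqref{main.2}.

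The main obstacle, and the reason the two halves of the theorem are asymmetric, lies in the functional-analytic bookkeeping of the second step. Because the kernels $g_{f_i}$ may be $+\infty$ at the endpoints (when $f_i(0)$ or $f_i'(\infty)$ is infinite), the maps $\mu \mapsto \int g_{f_i}\,d\mu$ are only lower semicontinuous. Their sublevel sets $\{\int g_{f_i}\,d\mu \le D_i\}$ are therefore closed, keeping the feasible set for $A$ compact with no extra hypothesis; but the superlevel sets $\{\int g_{f_i}\,d\mu \ge D_i\}$ needed for $B$ need not be closed, so $\mathcal{C}$ may fail to be compact and Choquet's theorem may fail to apply. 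Assuming $D_{f_1},\dots,D_{f_m}$ finite forces each $g_{f_i}$ to be bounded, hence continuous on the compact interval $[0,1]$, which restores closedness of the superlevel sets and compactness of $\mathcal{C}$; this is precisely the hypothesis under which \eqref{main.2} is asserted. I expect the remaining care to be in verifying the barycentric calculus for the possibly discontinuous objective kernel $g_f$ and in confirming that the abstract extreme points coincide with the finitely supported measures identified above.
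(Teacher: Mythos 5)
Your argument is correct, and it reaches the full strength of the theorem (reduction to $\Ps_{m+2}$, not merely $\Ps_{m+3}$), but it travels a genuinely different road from the paper's. The paper encodes a pair $(P,Q)$ by the concave ``testing curve'' $\psi_{P,Q}(s)=\int\min(p,qs)\,d\lambda$ and writes each divergence as $I_f(\psi)=\int(\min(1,s)-\psi(s))\,d\nu_f(s)$, so the optimization lives in the class $\C$ of concave functions on $[0,\infty)$; compactness is proved by a diagonal/Arzel\`a--Ascoli argument, Choquet's theorem is applied in $C[0,\infty)$ via truncated functionals $\Lambda_\epsilon$ plus monotone convergence, and the extreme points are pinned down by an explicit perturbation $(1\pm\alpha)p,(1\pm\alpha)q$ together with a counting of $k+2$ linear conditions in $k+3$ unknowns and a separate lemma converting the resulting level-set condition into finite support. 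You instead encode $(P,Q)$ by the law $\mu$ of $dP/d(P+Q)$ under $(P+Q)/2$ --- a probability measure on $[0,1]$ with barycenter $\tfrac12$ --- so that each $D_{f_i}$ is the integral of a perspective kernel $g_{f_i}$ against $\mu$; compactness is then free from weak-$*$ compactness of $\mathrm{Prob}([0,1])$ together with lower semicontinuity of $\mu\mapsto\int g_{f_i}\,d\mu$ (continuity under the finiteness hypothesis, which correctly reproduces the asymmetry between \eqref{main.1} and \eqref{main.2}), and the support bound $2+\ell\le m+2$ falls out of the classical theorem on extreme points of moment sets. What your route buys is brevity and the observation that the two normalizations of $P$ and $Q$ collapse into a single barycenter constraint, which is exactly why you beat the Carath\'eodory/joint-range count of Section~\ref{hash} by one; what it costs is reliance on two imported facts that you would need to prove or cite carefully: the moment-set extreme point theorem itself (whose proof for measures is essentially the same perturbation the paper carries out by hand in Theorem~\ref{charex}), and the step discarding inactive inequalities, which needs the small argument that if $\mu=(\mu_1+\mu_2)/2$ with $\mu_1,\mu_2$ in the equality-constrained set, then $(1-t)\mu+t\mu_j$ lies in the original feasible set for small $t>0$ because the inactive integrals $\int g_{f_i}\,d\mu_j$ are finite. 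Two cosmetic points: ``bounded hence continuous'' should read that a finite convex function on $[0,1]$ with endpoint values taken as limits is continuous; and since the objective kernel $g_f$ may be unbounded, the Choquet identity $I_f(\mu_0)=\int I_f\,d\tau$ does require the monotone-approximation step you flag (it is precisely the paper's $\Lambda_\epsilon$ device), so that caveat is not optional.
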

The conclusions of the above theorem may be better appreciated in the
following optimization form. Theorem~\ref{main} states that the
quantity $A(D_1, \dots, D_m)$ equals the optimal value of the
following finite-dimensional optimization problem: 
\begin{equation}\label{finiteA}
\begin{aligned}
& \underset{p,q \in [0,1]^{m+2}}{\textnormal{maximize}}
& & \sum_{j: q_j > 0} q_j f\left(\frac {p_j} {q_j} \right) +f'(\infty)
\sum_{j:q_j = 0} p_j \\ 
& \textnormal{subject to}
& & p_j \ge 0, \; q_j \ge 0 \textnormal{ for all } j = 1, \dots, m+2\\
& & & \sum p_j  = \sum q_j = 1\\
& & & \sum_{j: q_j > 0} q_j f_i \left(\frac {p_j} {q_j} \right) + f_i '
(\infty) \sum_{j:q_j = 0} p_j \le D_i
\end{aligned}
\end{equation}
for $i = 1, \dots, m$. Similarly, when $D_{f_1}, \dots, D_{f_m}$ are
all finite, $B(D_1, \dots, D_m)$ equals the optimal value of 
\begin{equation}\label{finiteB}
\begin{aligned}
& \underset{p,q \in [0,1]^{m+2}}{\textnormal{minimize}}
& & \sum_{j: q_j > 0} q_j f\left(\frac {p_j} {q_j} \right) +f'(\infty)
\sum_{j:q_j = 0} p_j \\ 
& \textnormal{subject to}
& & p_j\ge0, \; q_j\ge 0 \textnormal{ for all } j = 1, \dots, m+2 \\
& & & \sum p_j  = \sum q_j = 1\\
& & & \sum_{j: q_j > 0} q_j f_i \left(\frac {p_j} {q_j} \right) + f_i '
(\infty) \sum_{j:q_j = 0} p_j \ge D_i
\end{aligned}
\end{equation}
for $i = 1, \dots, m$. The proof of Theorem~\ref{main} is provided in
the next section. In Section~\ref{remex}, we argue that 
Theorem~\ref{main} is tight in general and also comment on the
assumption of finiteness of $D_{f_1}, \dots, D_{f_m}$ for the validity
of identity~\eqref{main.2}. We also describe an attempt to prove this
theorem via joint ranges but this only yields a weaker result. 

\section{Proof of the Main Result}\label{pruf}
\subsection{Testing Representation}\label{trep}
For two probability measures $P$ and $Q$, let us define the
function $\psi_{P, Q}: [0, \infty) \rightarrow [0, 1]$ by 
\begin{equation*}
  \psi_{P, Q}(s) := \int \min(p, qs) d\lambda \qt{for $s \in [0, \infty)$}
\end{equation*}
where $p$ and $q$ denote the densities of $P$ and $Q$ with respect to
a common measure $\lambda$ (which can, for example, be taken to be $P
+ Q$). This function $\psi_{P, Q}$ is nonnegative, concave,
non-decreasing and satisfies the inequality $0 \leq \psi_{P, Q}(s)
\leq \min(1, s)$ for all $s \geq 0$. In other words, $\psi \in \C$
where $\C$ denotes the class of all functions $\psi$ on $[0, \infty)$
that are nonnegative, concave, non-decreasing and satisfy the
inequality $\psi(s) \leq \min(1, s)$ for all $s \geq 0$. Moreover, it
is true (see, for example,~\cite[Corollary 5]{GenPin}) that
every function $\psi \in \C$ equals $\psi_{P, Q}$ for some pair of
probability measures $P$ and $Q$.  

For each divergence $D_f$, let us associate the measure $\nu_f$ on
$(0, \infty)$ defined by 
\begin{equation*}
  \nu_f(a, b] := \partial^r f(b) - \partial^r f(a) \qt{for $0 < a < b
    < \infty$} 
\end{equation*}
where $\partial^r$ denotes the right derivative operator (note that by
convexity $\partial^r f(x)$ exists for every $x \in (0, \infty)$). We 
also associate the functional $I_f: \C \rightarrow [0, \infty]$ by 
\begin{equation}\label{lie}
 I_f(\psi) := \int_0^{\infty} \left( \min(1, s) - \psi(s) \right)
 d\nu_f(s). 
\end{equation}
There is a precise connection between $D_f$ and $I_f$ that is given
below: 
\begin{lemma}\label{lv}
  For every pair of probability measures $P$ and $Q$, we have
  \begin{equation}\label{lv.eq}
    D_f(P||Q) = I_f(\psi_{P, Q}). 
  \end{equation}
\end{lemma}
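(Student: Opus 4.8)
The plan is to show that both sides of \eqref{lv.eq} are one and the same double integral against the product $\lambda\otimes\nu_f$, and then to read off the two sides by carrying out the two integrations in the two orders. Everything in sight is nonnegative, so Tonelli's theorem applies with no integrability hypotheses and the identity holds in $[0,\infty]$; in particular the degenerate case $D_f(P||Q)=+\infty$ is handled automatically.

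First I would record an integral representation of $f$ in terms of its second-derivative measure $\nu_f$. Starting from $\partial^r f(x)-\partial^r f(1)=\nu_f\big((1,x]\big)$ for $x>1$ and the symmetric statement for $x<1$, integrating $\partial^r f$ from $1$ to $x$ and applying Fubini gives, for every $x>0$,
\[
 f(x)=\partial^r f(1)\,(x-1)+\int_{(0,1]}(s-x)_+\,d\nu_f(s)+\int_{(1,\infty)}(x-s)_+\,d\nu_f(s).
\]
The split at $s=1$ is the natural one, since that is where the kink of $\min(1,\cdot)$ sits, and the pure linear term $\partial^r f(1)(x-1)$ will turn out to be harmless. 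Letting $x\uparrow\infty$ in the same identity also records $\nu_f\big((1,\infty)\big)=f'(\infty)-\partial^r f(1)$, which I will need on the set $\{q=0\}$.

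Next I would produce the matching pointwise description of the integrand of $I_f$. Using $\int p\,d\lambda=\int q\,d\lambda=1$ together with the elementary identities $\min(a,b)=a-(a-b)_+=b-(b-a)_+$, a short computation separating $s\le 1$ from $s>1$ yields
\[
 \min(1,s)-\psi_{P,Q}(s)=\begin{cases}\displaystyle\int (qs-p)_+\,d\lambda, & s\le 1,\\[1ex] \displaystyle\int (p-qs)_+\,d\lambda, & s>1.\end{cases}
\]
Substituting this into the definition \eqref{lie} of $I_f(\psi_{P,Q})$ and interchanging the $\lambda$- and $\nu_f$-integrations by Tonelli leaves the inner integral $\int_{(0,1]}(qs-p)_+\,d\nu_f(s)+\int_{(1,\infty)}(p-qs)_+\,d\nu_f(s)$ to be evaluated at $\lambda$-a.e.\ point.

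Finally I would evaluate this inner integral pointwise. Where $q>0$, writing $x=p/q$ and pulling out $q$ turns it into exactly $q\big[f(x)-\partial^r f(1)(x-1)\big]$ by the representation above; where $q=0$ it collapses to $p\,\nu_f\big((1,\infty)\big)=p\big(f'(\infty)-\partial^r f(1)\big)$. Integrating back over $\lambda$ then produces $\int_{q>0}qf(p/q)\,d\lambda+f'(\infty)P\{q=0\}$ plus two leftover linear contributions carrying $\partial^r f(1)$: the $\{q>0\}$ region contributes $-\partial^r f(1)\int_{q>0}(p-q)\,d\lambda=\partial^r f(1)\,P\{q=0\}$ (using $Q\{q=0\}=0$), while the $\{q=0\}$ region contributes $-\partial^r f(1)\,P\{q=0\}$, so the two cancel and what remains is precisely $D_f(P||Q)$. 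I expect the only genuine hazard to be this bookkeeping on $\{q=0\}$ and the $f'(\infty)$ term: one must carry the linear piece $\partial^r f(1)(x-1)$ along and verify its global cancellation rather than dropping it early, since that cancellation is exactly what reconciles the representation of $f$ (determined only up to an additive linear function) with the definition of $D_f$ (invariant under adding such a function).
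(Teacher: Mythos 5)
Your proof is correct, but there is no in-paper argument to compare it with: the authors do not prove Lemma~\ref{lv}, observing instead that after the change of variable $t=s/(1+s)$ it is a classical statement and citing \cite{OsterreicherVajda}, \cite{LieseVajda} and \cite[Theorem 2.3]{Liese}. What you have written is a complete self-contained derivation along what is essentially the standard route behind these representation theorems: a Taylor-type expansion of $f$ at $1$ against its second-derivative measure $\nu_f$, the identification of $\min(1,s)-\psi_{P,Q}(s)$ with the primitive divergence $D_{u_s}(P||Q)$ (this is exactly the content of Remark~\ref{prim}), and an interchange of the $\lambda$- and $\nu_f$-integrations by Tonelli, which is legitimate since $\nu_f$ is $\sigma$-finite on $(0,\infty)$ and all integrands are nonnegative. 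Two details are worth making explicit when you write this up. First, on $\{q>0,\,p=0\}$ you need the representation of $f$ at $x=0$, namely $f(0)+\partial^r f(1)=\int_{(0,1]}s\,d\nu_f(s)$ in $[0,\infty]$; this follows from your identity for $x>0$ by monotone convergence as $x\downarrow 0$ and is consistent with the convention $f(0):=\lim_{x\downarrow 0}f(x)$. Second, the final cancellation of the $\partial^r f(1)$ terms tacitly splits $\int_{q>0}\bigl(qf(p/q)-\partial^r f(1)(p-q)\bigr)\,d\lambda$ into two integrals; this is sound because the linear piece is dominated by $|\partial^r f(1)|(p+q)$ and hence absolutely integrable, while the supporting-line inequality $f(x)\ge\partial^r f(1)(x-1)$ shows that the quantity being split is nonnegative and dominates the negative part of $qf(p/q)$, so that $D_f(P||Q)$ itself is well defined in $(-\infty,\infty]$ and the identity holds even when both sides are $+\infty$. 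With those two remarks added, your argument is airtight and arguably more direct than the cited proofs, which pass through the reparametrization to $(0,1)$.
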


Lemma~\ref{lv} is not new although the form in which it is stated
above is non-standard. The more standard version simply involves
writing the integral in~\eqref{lie} over the interval $(0, 1)$ by the
change of variable $t = s/(1+s)$. In this modified form,
Lemma~\ref{lv} has been proved in~\cite{OsterreicherVajda} in the case
when $f$ is twice differentiable and in~\cite{LieseVajda} in the
general case. A short proof is available in~\cite[Theorem 2.3]{Liese}.  

\begin{remark}[Primitive $f$-divergences]\label{prim}
For each $s > 0$, let $u_s(t) := \min(1,s) - \min(t, s)$ for $t \in
(0, \infty)$. Clearly, $u_s$ is a convex function on $(0, \infty)$
such that $u_s(1) = 0$. Moreover, it is a very simple convex function
in the sense that it is piecewise linear with just two linear
parts. It is straightforward to check that the divergence
corresponding to $u_s$ is given by:  
\begin{equation*}
  D_{u_s}(P||Q) = \min(1, s) - \psi_{P, Q}(s). 
\end{equation*}
Lemma~\ref{lv} therefore asserts that any arbitrary $f$-divergence can
be written as an integral of the primitive divergences $D_{u_s}$ with
respect to the measure $\nu_f$ on $(0, \infty)$. The most well-known
of these primitive divergences is the total variation distance which
corresponds to $s = 1$. Indeed, 
\begin{equation*}
  D_{u_1}(P||Q) = 1 - \int \min(p, q) d\lambda = \frac{1}{2}\int |p-q|
  d\lambda =: V(P, Q) 
\end{equation*}
Every primitive divergence $D_{u_s}(P||Q)$ is closely related to the
smallest weighted average error (Bayes risk) in the problem of
statistical testing between the hypotheses $P$ against $Q$ based on an
observation $X$ (see, for example,~\cite[Lemma 3]{GenPin}). 
\end{remark}

\begin{remark}[Finiteness of a divergence]\label{remfin}
Lemma~\ref{lv} imples that 
\begin{equation}\label{fdi}
  \sup_{P, Q} D_f(P||Q) = \int_0^{\infty} \min(1, s) d\nu_f(s) = f(0)
  + f'(\infty).
\end{equation}
The supremum above is taken over all probability measures $P$
and $Q$ defined on \textit{all possible} measurable spaces. To
see~\eqref{fdi}, just note that, by Lemma~\ref{lv}, we have
  \begin{equation*}
    \sup_{P, Q} D_f(P||Q) = \sup_{P, Q} I_f(\psi_{P, Q}) = \sup_{\psi
      \in \C} I_f(\psi) = I_f(0). 
  \end{equation*}
Intuitively, $\psi_{P, Q}(s) = 0$ for all $s$ implies that $P$ and $Q$
are maximally separated (mutually singular) and thus the maximum value
of $I_f(\psi)$ is achieved when $\psi$ is the identically zero
function. The definition of $I_f$ gives that 
\begin{equation*}
  I_f(0) = \int_0^{\infty} \min(1, s) d\nu_f(s) 
\end{equation*}
Moreover, for the probability measures $P^* = (1, 0)$ and $Q^*
= (0, 1)$ in $\Ps_2$, the function $\psi_{P, Q}$ equals 0. Therefore, 
\begin{equation*}
I_f(0) = D_f(P^*||Q^*) = f(0) + f'(\infty), 
\end{equation*}
which proves~\eqref{fdi}. 

Recall that an $f$-divergence is \textit{finite} if $\sup_{P, Q}
D_f(P||Q) < \infty$. By~\eqref{fdi}, an $f$-divergence is finite if
and only if 
\begin{equation}\label{finchar}
  \int_0^{\infty} \min(1, s) d\nu_f(s) = f(0) + f'(\infty) < \infty. 
\end{equation}
Well known examples of finite divergences are the primitive
divergences, the square of the Hellinger distance and the capacitory
discrimination (which corresponds to the convex function~\eqref{capd}).  
\end{remark}

For each $f$ and $D \geq 0$, let us define 
\begin{equation*}
  \C_1(f, D) := \left\{\psi \in \C: I_f(\psi) \leq D \right\}
\end{equation*}
and
\begin{equation*}
  \C_2(f, D) := \left\{\psi \in \C: I_f(\psi) \geq D \right\}
\end{equation*}
As a consequence of Lemma~\ref{lv}, we obtain that 
\begin{equation}\label{s1a}
  A(D_1, \dots, D_m) = \sup \left\{I_f(\psi) : \psi \in \cap_{i=1}^m
    \C_1(f_i, D_i) \right\}
\end{equation}
and 
\begin{equation}\label{s1b}
  B(D_1, \dots, D_m) = \inf \left\{I_f(\psi) : \psi \in \cap_{i=1}^m
    \C_2(f_i, D_i) \right\}. 
\end{equation}
The following lemma on the derivatives of the function $\psi_{P, Q}$
(the left and right derivative operators are denoted by $\partial^l$
and $\partial^r$ respectively) will be useful in the sequel. 
\begin{lemma}\label{der}
For every function $\psi = \psi_{P, Q}$ in $\C$, we have
\begin{equation}\label{pl}
  \partial^l \psi(s) = Q\left\{p \geq s q \right\} \qt{for $s > 0$}
\end{equation}
and 
\begin{equation}\label{pr}
\partial^r \psi(s) = Q \left\{p > s q \right\} \qt{for $s \geq 0$}. 
\end{equation}
\end{lemma}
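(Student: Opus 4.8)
The plan is to differentiate under the integral sign, treating $\psi(s) = \int \min(p, qs)\, d\lambda$ as a $\lambda$-average of the elementary functions $s \mapsto \min(p(\omega), q(\omega)s)$ and justifying the interchange by dominated convergence. Since every $\psi \in \C$ is concave, the one-sided derivatives $\partial^l \psi(s)$ and $\partial^r \psi(s)$ exist at every $s > 0$ (and $\partial^r \psi(0)$ exists as well), so the only task is to identify their values.

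First I would handle the right derivative. For $h > 0$ write
\begin{equation*}
  \frac{\psi(s+h) - \psi(s)}{h} = \int \frac{\min(p, q(s+h)) - \min(p, qs)}{h}\, d\lambda.
\end{equation*}
Because $x \mapsto \min(p, x)$ is $1$-Lipschitz, the integrand is nonnegative and bounded above by $q$, which is $\lambda$-integrable; thus dominated convergence applies. A pointwise case analysis on the fixed values $p, q$ shows that the difference quotient tends, as $h \downarrow 0$, to $q$ on the set $\{p > sq\}$ and to $0$ on its complement: when $q > 0$ and $p > sq$ the quotient equals $q$ for all small $h$, while when $p \le sq$, or $q = 0$, it is eventually $0$. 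Passing the limit inside gives $\partial^r \psi(s) = \int_{\{p > sq\}} q\, d\lambda = Q\{p > sq\}$, which is~\eqref{pr}. The same computation at $s = 0$ yields $\partial^r \psi(0) = Q\{p > 0\}$, so the formula holds there too.

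For the left derivative I would run the identical argument on the quotient $(\psi(s) - \psi(s-h))/h$ with $h \downarrow 0$, again dominated by $q$. The one point of genuine care, and the only real subtlety in the lemma, is the boundary set $\{p = sq\}$: when approaching $s$ from below, at a point with $p = sq$ one has $\min(p, q(s-h)) = q(s-h)$ for small $h$, so the quotient equals $q$ rather than $0$. Hence the pointwise limit is $q$ on the \emph{closed} set $\{p \ge sq\}$, giving $\partial^l \psi(s) = Q\{p \ge sq\}$, which is~\eqref{pl}. Everything else is the routine bookkeeping of the four sign cases for $p - sq$ and $q$; the substantive content is simply that the open/closed distinction between $\{p > sq\}$ and $\{p \ge sq\}$ is exactly what separates the right derivative from the left.
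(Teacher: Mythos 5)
Your proposal is correct and follows essentially the same route as the paper: both express the one-sided difference quotients as integrals, bound the integrand by the integrable function $q$, and pass to the limit by dominated convergence, with the pointwise limit being $q$ on $\{p > sq\}$ for the right derivative and on $\{p \geq sq\}$ for the left. The paper writes out the left-derivative case and declares the right similar, whereas you detail both and make the open-versus-closed boundary issue explicit, but the argument is the same.
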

\begin{proof}
For every $s > 0$,
\begin{equation*}
  \partial^l \psi(s) = \lim_{\epsilon \downarrow 0} \frac{\psi(s) -
    \psi(s-\epsilon)}{\epsilon}
\end{equation*}
and 
\begin{equation*}
  \frac{\psi(s) - \psi(s-\epsilon)}{\epsilon} = \int \frac{\min(p, qs)
  - \min(p, q(s-\epsilon))}{\epsilon} d\lambda
\end{equation*}
It is easy to check that the integrand above is bounded in absolute
value by $q$ and converges as $\epsilon \downarrow 0$ to $q \left\{p
  \geq q s \right\}$. The identity~\eqref{pl} therefore follows by the
dominated convergence theorem. The proof of~\eqref{pr} is similar. 
\end{proof}

\subsection{Reduction to Extreme Points}
Let us first recall the definition of extreme points. Let $S$ be a
subset of a vector space $V$. A point $a \in S$ is called an extreme
point of $S$ if $a = (b+c)/2$ for $b, c \in S$ implies that $a = b =
c$. In other words, $a$ cannot be the mid-point of a non-trivial line
segment whose end points lie in $S$. We denote the set of all extreme
points of $S$ by $ext(S)$. 

An important result about extreme points in infinite dimensional
topological vector spaces is Choquet's theorem (see,
for example,~\cite[Chapter 3]{Phelps66Choquet}). We shall use the
following version of Choquet's theorem in this section:
\begin{theorem}[Choquet]\label{lechoc}
  Let $K$ be a metrizable, compact convex subset of a locally convex
  space $V$ and let $x_0$ be an element of $K$. Then there exists a
  Borel probability measure $\mu_0$ on $K$ which is concentrated on
  the extreme points of $K$ and which satisfies $L(x_0) = \int_K L(x)
  d\mu_0(x)$ for every continuous linear functional $L$ on $V$. 
\end{theorem}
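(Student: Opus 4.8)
The statement is the classical Choquet representation theorem, and the plan is to prove it in this metrizable setting by turning the representation of $x_0$ into a maximization problem over the set of probability measures whose barycenter is $x_0$, and then showing that a maximizer must be carried by the extreme points. Call a Borel probability measure $\lambda$ on $K$ a representing measure for a point $x$ if $L(x) = \int_K L \, d\lambda$ for every continuous linear functional $L$ on $V$, and let $M_{x_0}$ be the set of representing measures for $x_0$. This set is nonempty (it contains the point mass $\delta_{x_0}$), convex, and, identifying probability measures on the compact metric space $K$ with a subset of the unit ball of $C(K)^\ast$, it is weak-$\ast$ compact: it is the intersection of the (Banach--Alaoglu) compact set of probability measures with the weak-$\ast$ closed conditions $\int L \, d\lambda = L(x_0)$. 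Thus it suffices to exhibit some $\mu_0 \in M_{x_0}$ that is concentrated on $\mathrm{ext}(K)$.

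The engine of the proof is a single continuous, strictly convex function on $K$, and this is where metrizability enters: since $K$ is compact and metrizable, $C(K)$ is separable and the continuous affine functions separate points of $K$ (by Hahn--Banach). I would therefore pick a sequence $(a_n)$ of continuous affine functions with $\|a_n\|_\infty \le 1$ separating the points of $K$ and set $f := \sum_{n \ge 1} 2^{-n} a_n^2$. Uniform convergence makes $f$ continuous, each summand is convex, and because some $a_n$ is nonconstant on any nondegenerate segment, $f$ is strictly convex. Let $\bar f(x) := \inf\{a(x) : a \text{ continuous affine}, \ a \ge f\}$ denote its concave upper envelope, which is concave, upper semicontinuous, and satisfies $\bar f \ge f$.

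Now maximize the weak-$\ast$ continuous functional $\lambda \mapsto \int_K f \, d\lambda$ over the compact set $M_{x_0}$ to obtain a maximizer $\mu_0$. To see that $\mu_0$ lives on $\{f = \bar f\}$, I would sandwich three quantities: from $f \le \bar f$ we get $\int f \, d\mu_0 \le \int \bar f \, d\mu_0$; from Jensen's inequality applied to the concave function $\bar f$ and the barycenter condition we get $\int \bar f \, d\mu_0 \le \bar f(x_0)$; and from the duality $\bar f(x_0) = \sup_{\lambda \in M_{x_0}} \int f \, d\lambda = \int f \, d\mu_0$ the chain closes with equality throughout. Hence $\int (\bar f - f) \, d\mu_0 = 0$ with a nonnegative integrand, so $\mu_0$ is concentrated on the Borel set $\{f = \bar f\}$. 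Finally, strict convexity forces $\{f = \bar f\} \subseteq \mathrm{ext}(K)$: if $x = (y+z)/2$ with $y \neq z$, then $f(x) < \tfrac12(f(y) + f(z)) \le \tfrac12(\bar f(y) + \bar f(z)) \le \bar f(x)$, so a non-extreme point cannot satisfy $f(x) = \bar f(x)$. Since $K$ is metrizable, $\mathrm{ext}(K)$ is a $G_\delta$ and hence Borel, so $\mu_0(\mathrm{ext}(K)) = 1$, and $\mu_0$ is the desired measure.

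The main obstacle is the duality identity $\bar f(x_0) = \sup_{\lambda \in M_{x_0}} \int f \, d\lambda$. One inequality is routine: for any affine $a \ge f$ and any $\lambda \in M_{x_0}$ we have $\int f \, d\lambda \le \int a \, d\lambda = a(x_0)$, and taking the infimum over such $a$ yields $\int f \, d\lambda \le \bar f(x_0)$. The reverse inequality --- that representing measures actually realize the envelope --- carries the real content and would require a Hahn--Banach separation argument showing that $x \mapsto \sup_{\lambda \in M_x} \int f \, d\lambda$ is a concave, upper semicontinuous minorant of $\bar f$ lying above $f$, hence equal to $\bar f$. I would flag this step, together with the lemma that $\mathrm{ext}(K)$ is $G_\delta$ in the metrizable case, as the two places where the genuine work lies.
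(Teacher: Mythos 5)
The paper does not prove this statement: Theorem~\ref{lechoc} is quoted as a classical result with a citation to Phelps' \emph{Lectures on Choquet's Theorem}, so there is no in-paper argument to compare against. Your outline is the standard proof of the metrizable case (essentially the one in Phelps): build a single continuous strictly convex $f$ from a countable separating family of continuous affine functions, pass to the upper envelope $\bar f$, produce a representing measure $\mu_0$ with $\int f\,d\mu_0 = \bar f(x_0)$, and conclude from $\int(\bar f - f)\,d\mu_0 = 0$ and the inclusion $\{f = \bar f\}\subseteq \mathrm{ext}(K)$ (forced by strict convexity) that $\mu_0$ lives on the extreme points. All of the steps you do carry out are correct: $M_{x_0}$ is nonempty, convex and weak-$\ast$ compact; the Jensen step $\int \bar f\,d\mu_0 \le \bar f(x_0)$ is valid because $\bar f$ is an infimum of continuous affine functions each of which integrates to its value at the barycenter; and the strict-convexity argument excluding non-extreme points from $\{f=\bar f\}$ is right.

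The one load-bearing step you leave unproven is the duality $\bar f(x_0) = \sup_{\lambda \in M_{x_0}} \int f\,d\lambda$, and you are right that this is where the real content sits --- it is exactly the lemma whose proof occupies most of Phelps' treatment. Your flagged plan for it does close, but note it needs two further ingredients you should make explicit: (i) that $\bar f$, defined as the infimum of continuous affine majorants, is the \emph{least} upper semicontinuous concave majorant of $f$ (this is a Hahn--Banach separation of the hypograph of an u.s.c.\ concave function from a point below it), and (ii) that $g(x) := \sup_{\lambda \in M_x} \int f\,d\lambda$ is genuinely concave and u.s.c.\ (concavity from convex combinations of representing measures; upper semicontinuity from weak-$\ast$ compactness of the measures together with continuity of the barycenter map). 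An alternative that shortcuts your maximization step entirely is Phelps' route: define $a + rf \mapsto a(x_0) + r\bar f(x_0)$ on the subspace $A(K) + \mathbb{R}f$ of $C(K)$, dominate it by the sublinear functional $g \mapsto \bar g(x_0)$, extend by Hahn--Banach, and observe that the extension is positive with total mass one, hence a representing measure already satisfying $\mu(f) = \bar f(x_0)$. Either way, as written your proposal is a correct skeleton with the central lemma honestly flagged rather than proved.
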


The goal of this section is to prove the following: 
\begin{lemma}\label{redex}
  For every $D_1, \dots, D_m \geq 0$, we have 
  \begin{equation*}
    A(D_1, \dots, D_m) = \sup \left\{I_f(\psi) : \psi \in ext
      \left(\cap_{i=1}^m \C_1(f_i, D_i) \right) \right\} 
  \end{equation*}
and further, if $D_{f_1}, \dots, D_{f_m}$ are all finite, we have 
  \begin{equation*}
    B(D_1, \dots, D_m) = \inf \left\{I_f(\psi) : \psi \in ext
      \left(\cap_{i=1}^m \C_2(f_i, D_i) \right) \right\} 
  \end{equation*}
\end{lemma}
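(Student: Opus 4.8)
The plan is to realize the optimizations in~\eqref{s1a} and~\eqref{s1b} inside a topological vector space where Choquet's theorem (Theorem~\ref{lechoc}) applies, and then to push the resulting representing measure through the functional $I_f$. I would work in $V = C[0,\infty)$ with the topology of uniform convergence on compact sets, a locally convex metrizable (Fr\'echet) space. The first task is to show that $\C$ is a compact convex subset of $V$. Convexity is clear, since nonnegativity, concavity, monotonicity and the bound $\psi \le \min(1,s)$ are all preserved under convex combinations. For compactness, note that every $\psi \in \C$ satisfies $\psi(0) = 0$ and, being concave with $\psi(s) \le s$, has right derivative at $0$ bounded by $1$; concavity then forces every $\psi \in \C$ to be $1$-Lipschitz and bounded by $1$. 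Thus $\C$ is uniformly bounded and equicontinuous on each $[0,n]$, and Arzel\`a--Ascoli with a diagonal argument gives compactness, closedness being immediate.

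Next I would handle the constraint sets. Each $I_{f_i}$ is affine in $\psi$, so $\C_1(f_i, D_i)$ and $\C_2(f_i, D_i)$ are convex. Since the integrand $\min(1,s) - \psi(s)$ is nonnegative, Fatou's lemma makes $\psi \mapsto I_{f_i}(\psi)$ lower semicontinuous, so $\C_1(f_i, D_i) = \{I_{f_i} \le D_i\}$ is always closed. This is exactly where the finiteness hypothesis is used: if $D_{f_i}$ is finite then $\int \min(1,s)\,d\nu_{f_i} < \infty$ by~\eqref{finchar}, the integrand is dominated by the $\nu_{f_i}$-integrable function $\min(1,s)$, and dominated convergence upgrades $I_{f_i}$ to a \emph{continuous} functional, so that $\C_2(f_i, D_i) = \{I_{f_i} \ge D_i\}$ is closed. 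In either case the relevant intersection $K$ is a closed convex subset of the compact set $\C$, hence itself compact, convex and metrizable, as required by Theorem~\ref{lechoc}.

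The heart of the argument, and its main obstacle, is that $I_f$ itself need not be continuous (it can even equal $+\infty$), so Choquet's theorem cannot be applied to it directly. I would get around this by truncation: set $I_f^{(n)}(\psi) := \int_{1/n}^{n} (\min(1,s) - \psi(s))\,d\nu_f(s)$. Since $\nu_f$ is finite on $[1/n, n]$, the map $\psi \mapsto \int_{1/n}^n \psi\,d\nu_f$ is a continuous linear functional on $V$, so $I_f^{(n)}$ is continuous and affine. Given any $\psi_0 \in K$, Theorem~\ref{lechoc} yields a Borel probability measure $\mu_0$ on $K$ concentrated on $ext(K)$ with $L(\psi_0) = \int_K L\,d\mu_0$ for every continuous linear $L$. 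Applying this to the linear part of $I_f^{(n)}$ gives $I_f^{(n)}(\psi_0) = \int_{ext(K)} I_f^{(n)}\,d\mu_0$. Both sides have nonnegative integrands increasing in $n$, so letting $n \to \infty$ and applying the monotone convergence theorem on each side yields the key identity $I_f(\psi_0) = \int_{ext(K)} I_f(\psi)\,d\mu_0(\psi)$.

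Finally, since $\mu_0$ is a probability measure supported on $ext(K)$, this identity immediately gives $I_f(\psi_0) \le \sup_{ext(K)} I_f$ and $I_f(\psi_0) \ge \inf_{ext(K)} I_f$ for every $\psi_0 \in K$. Taking the supremum over $\psi_0 \in \cap_i \C_1(f_i, D_i)$ for the first claim, and the infimum over $\psi_0 \in \cap_i \C_2(f_i, D_i)$ for the second, and combining with the trivial inclusion $ext(K) \subseteq K$, produces the two asserted equalities through~\eqref{s1a} and~\eqref{s1b}. Beyond the routine verification of the topological hypotheses, the only delicate point is this passage from the truncated, continuous functionals $I_f^{(n)}$ to the full (possibly infinite) $I_f$, which monotone convergence handles precisely because $\min(1,s) - \psi$ is nonnegative.
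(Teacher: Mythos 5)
Your proposal is correct and follows essentially the same route as the paper: apply Choquet's theorem in $C[0,\infty)$ with the compact-convergence topology to the compact convex constraint set (compactness of $\C$ via the $1$-Lipschitz bound, closedness of the $\C_1$ sets via Fatou and of the $\C_2$ sets via dominated convergence under finiteness), and then pass from the truncated continuous affine functionals $I_f^{(n)}$ to $I_f$ by monotone convergence on both sides of the barycenter identity. The only cosmetic differences are your use of Arzel\`a--Ascoli in place of the paper's explicit diagonalization argument for Lemma~\ref{com}, and your phrasing of the Fatou step as lower semicontinuity of $I_{f_i}$.
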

\begin{proof}
  The proof is based on Theorem~\ref{lechoc}. Let $C[0, \infty)$
  denote the space of all continuous functions on $[0, \infty)$
  equipped with the topology given by the metric: 
  \begin{equation}\label{met}
    \rho(f, g) := \sum_{k \geq 1} 2^{-k} \min \left(\sup_{0 \leq x
        \leq k} |f(x) - g(x)|, 1 \right). 
  \end{equation}
 It is a fact (see, for example,~\cite[Chapter 1]{Rudin.func})
 that $C[0, \infty)$ is a locally convex vector space under this
 topology. We shall apply Choquet's theorem to $V = C[0, \infty)$ and
 $K = \cap_{i=1}^m \C_1(f_i, D_i)$ for the first identity and $K =
 \cap_{i=1}^m \C_2(f_i, D_i)$ for the second identity. It is obvious
 that $\C$ is a subset of $C[0, \infty)$. 

 Clearly both the sets $\cap_i \C_1(f_i, D_i)$ and $\cap_i \C_2(f_i, D_i)$
 are convex. Also, by Fatou's lemma, $\cap_i \C_1(f_i, D_i)$ is closed
 under pointwise convergence i.e., if $\psi_n \in \cap_i \C_1(f_i,
 D_i)$ and $\psi_n \rightarrow \psi$ pointwise, then $\psi \in \cap_i
 \C_1(f_i, D_i)$. To see this, observe that by Fatou's lemma, for each
 $i = 1, \dots, m$, 
 \begin{align*}
   I_{f_i}(\psi) &= \int_0^{\infty} \left(\min(1, s) - \psi(s) \right)
   d\nu_{f_i}(s) \\
&= \int_0^{\infty} \liminf_{n \rightarrow \infty} \left(\min(1, s) -
  \psi_n(s) \right) d\nu_{f_i}(s) \\
&\leq  \liminf_{n \rightarrow \infty} \int_0^{\infty} \left( \min(1,
  s) - \psi_n(s) \right) d\nu_{f_i}(s) \leq D_i. 
 \end{align*}
On the other hand, if each $D_{f_i}$ is a finite
 divergence, then by the dominated convergence theorem, $\cap_i
 \C_2(f_i, D_i)$ is also closed under pointwise convergence. Indeed,
 if $\psi_n \rightarrow \psi$ pointwise and $D_{f_i}$ is a finite
 divergence, then by the dominated convergence (since $0 \leq
 \min(1, s) - \psi_n(s) \leq \min(1, s)$), we have $I_{f_i}(\psi_n)
 \rightarrow I_{f_i}(\psi)$. 

In Lemma~\ref{com} below, we show that $\C$ is a compact subset of 
$C[0, \infty)$ under the topology given by the metric
$\rho$. Moreover, it is easy to see that convergence in the metric
$\rho$ implies pointwise convergence. It follows hence that $\cap_i
\C_1(f_i, D_i)$ is a compact, convex subset of $C[0, \infty)$ and if
each $D_{f_i}$ is a finite divergence, then $\cap_i \C_2(f_i, D_i)$ is
also a compact, convex subset of $C[0,\infty)$. 

For each $\epsilon > 0$, let us define the functional
$\Lambda_{\epsilon}$ on $C[0, \infty)$ by 
\begin{equation*}
  \Lambda_{\epsilon}(\psi) = \int \left(\min(1, s) - \psi(s) \right)
  \left\{\epsilon \leq s \leq 1/\epsilon \right\} d\nu_f(s)
\end{equation*}
When restricted to the interval $[\epsilon, 1/\epsilon]$, the measure
$\nu_f$ is a finite measure. Hence, $\Lambda_{\epsilon}$ is a
continuous, linear functional on $C[0, \infty)$. Thus, by
Theorem~\ref{lechoc}, we get that for every $\psi_0 \in \cap_i
\C_1(f_i, D_i)$, there exists a Borel probability measure $\tau_0$
that is concentrated on the set of extreme points, $ext(\cap_i
\C_1(f_i, D_i))$, of $\cap_i \C_1(f_i, D_i)$ such that 
\begin{equation*}
  \Lambda_{\epsilon}(\psi_0) = \int \Lambda_{\epsilon}(\psi) d\tau_0(\psi),
\end{equation*}
for every $\epsilon > 0$. Now, by the monotone convergence theorem, 
\begin{equation*}
  \Lambda_{\epsilon}(\psi) \uparrow I_f(\psi) \qt{as $\epsilon
    \downarrow 0$}
\end{equation*}
for every $\psi \in \C$. As a result, we can use the monotone
convergence theorem again to assert that
\begin{equation*}
  \int \Lambda_{\epsilon}(\psi) d\tau_0(\psi) \uparrow \int
  I_f(\psi) d\tau_0(\psi) \qt{as $\epsilon \downarrow 0$}. 
\end{equation*}
We therefore obtain
\begin{equation}\label{ifcon}
  I_f(\psi_0) = \int I_f(\psi) d\tau_0(\psi). 
\end{equation}
Since this is true for all functions $\psi_0$ in $\cap_i \C_1(f_i,
D_i)$, we obtain 
\begin{equation*}
  \sup_{\psi \in \cap_i \C_1(f_i, D_i)} I_f(\psi) =   \sup_{\psi \in
    ext \left(\cap_i \C_1(f_i, D_i)\right)} I_f(\psi) 
\end{equation*}
The proof of the first assertion of Lemma~\ref{redex} is now complete
by~\eqref{s1a}. Similarly, when each divergence $D_{f_i}$ is
finite, we can prove that 
\begin{equation*}
  \inf_{\psi \in \cap_i \C_2(f_i, D_i)} I_f(\psi) =   \inf_{\psi \in
    ext \left(\cap_i \C_2(f_i, D_i)\right)} I_f(\psi) 
\end{equation*}
and this, together with~\eqref{s1b}, completes the proof of
Lemma~\ref{redex}. 
\end{proof}

In the above proof, we used the fact that $\C$ is
compact in $C[0, \infty)$, the space of all continuous functions on 
$[0, \infty)$. We prove this fact below. 
\begin{lemma}\label{com}
  The class $\C$ is compact in $C[0, \infty)$ equipped with the
  topology given by the metric~\eqref{met}. 
\end{lemma}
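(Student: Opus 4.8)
The plan is to establish sequential compactness of $\C$; since $C[0,\infty)$ endowed with $\rho$ is a metric space, this is equivalent to the asserted compactness. Observe first that convergence in $\rho$ is precisely uniform convergence on each compact interval $[0,k]$. Thus it suffices to show that every sequence $(\psi_n)$ in $\C$ admits a subsequence converging uniformly on every $[0,k]$ to some limit lying again in $\C$.

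The heart of the argument is a uniform Lipschitz bound, which will supply the equicontinuity needed below. Every $\psi \in \C$ satisfies $\psi(0) = 0$, since $0 \le \psi(0) \le \min(1,0) = 0$. For $0 < s < t$, concavity together with $\psi(0) = 0$ forces the chord slopes to be non-increasing, so that $\frac{\psi(t) - \psi(s)}{t-s} \le \frac{\psi(s)}{s} \le \frac{\min(1,s)}{s} \le 1$, while the monotonicity of $\psi$ gives $\psi(t) - \psi(s) \ge 0$. Hence $0 \le \psi(t) - \psi(s) \le t - s$, i.e.\ every member of $\C$ is $1$-Lipschitz. Combined with the uniform bound $0 \le \psi(s) \le \min(1,s) \le 1$, this shows that $\C$ is uniformly bounded and (uniformly) equicontinuous on each interval $[0,k]$.

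With these two properties in hand, I would invoke the Arzel\`a--Ascoli theorem on each $[0,k]$ to extract from any given sequence in $\C$ a subsequence converging uniformly on $[0,k]$; a standard diagonal argument then produces a single subsequence converging uniformly on every $[0,k]$, hence in $\rho$, to some $\psi \in C[0,\infty)$. It remains to verify that the limit belongs to $\C$. This is routine: each defining property---nonnegativity, the envelope bound $\psi(s) \le \min(1,s)$, monotonicity, and concavity---is a closed condition preserved under pointwise limits, and uniform-on-compacta convergence certainly entails pointwise convergence. Therefore $\psi \in \C$, which yields sequential compactness and thus the lemma.

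I do not anticipate a genuine obstacle here. The only step demanding care is the equicontinuity estimate, but the structural constraints defining $\C$---concavity through the origin together with the envelope $\min(1,s)$---deliver the uniform $1$-Lipschitz property essentially for free, after which Arzel\`a--Ascoli and the stability of the defining conditions under limits finish the proof.
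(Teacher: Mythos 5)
Your proof is correct and follows essentially the same route as the paper: both arguments hinge on the uniform $1$-Lipschitz bound for members of $\C$ and a diagonal extraction of a subsequence converging on all of $[0,\infty)$ (the paper diagonalizes over the rationals by hand and then uses the Lipschitz property, which is exactly what your appeal to Arzel\`a--Ascoli packages). If anything, your write-up is slightly more complete, since you explicitly check that the limit function again lies in $\C$ and correctly identify $\rho$-convergence as uniform convergence on compacta rather than mere pointwise convergence.
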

\begin{proof}
  We show that $\C$ is sequentially compact. Consider a sequence
  $\{\psi_n\}$ in $\C$. For every fixed $s_0 \in [0, \infty)$, the
  sequence $\{\psi_n(s_0)\}$ is a sequence of real numbers in $[0, 1]$
  and hence has a convergent subsequence. By a standard
  diagonalization argument, we assert the existence of a subsequence
  $\{\phi_k\}$ of $\{\psi_n\}$ that converges pointwise over the
  set of all nonnegative rational numbers (denoted by $\Q_+$). 

  Let us now fix $\epsilon > 0$ and a real number $s_0 \in [0,
  \infty)$. Choose $r_1, r_2 \in \Q_+$ such that $r_1 \leq s_0 \leq
  r_2$ and such that $r_2 - r_1 < \epsilon/4$. Also, let $N \geq 1$
  be large enough so that 
  \begin{equation*}
    |\phi_k(r_i) - \phi_l(r_i)| < \epsilon/4 \qt{for $k, l \geq N$}
  \end{equation*}
  and for $i = 1, 2$. Using properties of functions in $\C$, we get
  that 
  \begin{align*}
    |\phi_k(s_0) - \phi_l(s_0)| &< |\phi_k(r_1) - \phi_l(r_2)| + |\phi_k(r_2) -
    \phi_l(r_1)| \\
&< 2 |\phi_k(r_1) - \phi_l(r_1)| + 2|r_1 - r_2| < \epsilon. 
  \end{align*}
  In the last inequality above, we have used the fact that functions
  in $\C$ are Lipschitz with constant 1 (this can be proved for
  instance using the derviatives given by Lemma~\ref{der}). It
  therefore follows that the sequence $\{\phi_k\}$ converges pointwise on
  $[0, \infty)$. The proof is now complete by the observation that 
  convergence in the metric $\rho$ is equivalent to pointwise
  convergence on $[0, \infty)$. 
\end{proof}

\subsection{Characterization of Extreme Points}
Lemma~\ref{redex} asserts that for the purposes of finding the
supremum or infimum of $I_f$ subject to constraints on $I_{f_i}$, it
is enough to focus on the extreme points of the constraint set. In the
next theorem, we provide a necessary condition for a function in the
constraint set to be an extreme point of the constraint set. 

\begin{theorem}\label{charex}
  Let $\psi$ be a function in $\cap_i \C_1(f_i,
  D_i)$ and let $k$ be the number of indices $i$ for which
  $I_{f_i}(\psi) = D_i$. Then a necessary condition for $\psi$ to be
  extreme in $\cap_i \C_1(f_i, D_i)$ is that $\psi$ equals $\psi_{P,
    Q}$ for two probability measures $P, Q \in \Ps_{k+2}$. The same
  conclusion also holds for extreme functions in $\cap_i \C_2(f_i,
  D_i)$ provided all the involved divergences $D_{f_1}, \dots,
  D_{f_m}$ are finite.  
\end{theorem}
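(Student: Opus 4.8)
The plan is to prove the contrapositive: I would show that if $\psi \in \cap_i \C_1(f_i, D_i)$ is \emph{not} of the form $\psi_{P,Q}$ with $P, Q \in \Ps_{k+2}$, then $\psi$ cannot be extreme, by exhibiting a nonzero $h$ with $\psi \pm t h \in \cap_i \C_1(f_i, D_i)$ for small $t>0$, so that $\psi = \tfrac12[(\psi+th)+(\psi-th)]$ is a nontrivial midpoint. The first step is to translate the target conclusion into a concrete geometric property of $\psi$. By Lemma~\ref{der}, the functions $\psi_{P,Q}$ with $P,Q \in \Ps_n$ are precisely the piecewise linear members of $\C$: each coordinate $j$ with $p_j,q_j>0$ contributes a kink at $s=p_j/q_j$, where the right derivative $\partial^r \psi = Q\{p > sq\}$ drops by $q_j$, while coordinates with $p_j=0$ or $q_j=0$ carry ``leftover'' $Q$- and $P$-mass invisibly. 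Accounting for mass shows that a piecewise linear $\psi \in \C$ with $r$ kinks that touches $b \in \{0,1,2\}$ of the two boundary lines $\psi(s)=s$ near $0$ (i.e.\ $\partial^r\psi(0)=1$) and $\psi(s)=1$ near $\infty$ (i.e.\ $\psi(\infty)=1$) needs exactly one extra atom for each boundary it misses, hence is representable with $r+2-b$ atoms, i.e.\ with $P,Q \in \Ps_{r+2-b}$. So it suffices to show that an extreme $\psi$ is piecewise linear with $r-b \le k$.

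Next comes the perturbation. Since $I_{f_i}$ is affine in $\psi$, we have $I_{f_i}(\psi \pm t h) = I_{f_i}(\psi) \mp t\int h\, d\nu_{f_i}$, so a symmetric perturbation preserves the $k$ \emph{active} constraints (those with $I_{f_i}(\psi)=D_i$) exactly when $\int h\, d\nu_{f_i}=0$ for those $i$; the strictly satisfied constraints survive automatically once $t$ is small. To keep $\psi \pm t h$ inside $\C$ for \emph{both} signs I would impose $h(0)=0$, force $h\equiv 0$ on any piece where $\psi$ hugs a boundary line (so the pinned constraints are not disturbed), and, crucially, require $-h''$ to be absolutely continuous with respect to $-\psi''$ with density bounded by some constant $C$. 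This last condition makes $-(\psi\pm th)''\ge 0$ for $|t|\le 1/C$, so concavity is preserved on both sides; combined with the pinning and the $s\to\infty$ bookkeeping (monotonicity forces zero slope on the unbounded piece), $\psi \pm t h$ stays in $\C$.

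The dimension count then finishes the argument. If $-\psi''$ has a continuous part or infinitely many atoms, the space of admissible $h$ is infinite dimensional, so the $k$ linear conditions $\int h\, d\nu_{f_i}=0$ admit a nonzero solution and $\psi$ is not extreme; thus an extreme $\psi$ must be piecewise linear. When $-\psi''$ consists of finitely many atoms, located at the kinks $s_1,\dots,s_r$, the admissible $h$ are piecewise linear with kinks only among the $s_j$ and pinned to $0$ on the boundary pieces, which leaves exactly $r-b$ free parameters (the heights at the interior kinks). If $r-b>k$, the $k$ orthogonality conditions again leave a nonzero admissible $h$, so extremality forces $r-b\le k$, i.e.\ $P,Q\in\Ps_{k+2}$. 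The case of $\cap_i \C_2(f_i,D_i)$ is handled identically, the reversed inequalities changing only the (immaterial) signs, provided all $D_{f_i}$ are finite.

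The hard part will be the integrability and regularity checks that make the perturbation legitimate, not the dimension count. Two points need care. First, one must verify that $\int h\, d\nu_{f_i}$ is finite even when $D_{f_i}$ is not a finite divergence (in the $\C_1$ case); here the key is that $\psi$ \emph{already} satisfies $I_{f_i}(\psi)\le D_i<\infty$, which forces $\nu_{f_i}$ to be light precisely where $h$ is allowed to be nonzero — near $\infty$ only when $\psi(\infty)<1$, and near $0$ only when $\psi$ leaves the line $\psi(s)=s$. Second, the curvature-domination construction must be made rigorous when $-\psi''$ is a general nonnegative measure, producing genuinely two-sided, boundary-respecting $h$ out of it. The finiteness hypothesis on $D_{f_1},\dots,D_{f_m}$ in the $\C_2$ statement enters exactly to keep these integrals well behaved.
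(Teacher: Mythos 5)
Your proposal is correct in outline but takes a genuinely different route from the paper's. The paper also argues by a symmetric two-sided perturbation, but it perturbs the underlying \emph{densities} rather than $\psi$ itself: writing $\psi=\psi_{P,Q}$, it sets $p_{\pm}=(1\pm\alpha)p$ and $q_{\pm}=(1\pm\alpha)q$ for a function $\alpha$ with values in $(-1,1)$ satisfying $\int\alpha p\,d\lambda=\int\alpha q\,d\lambda=0$, so that $\psi=(\psi_{+}+\psi_{-})/2$ with $\psi_{\pm}\in\C$ automatically --- none of the concavity, monotonicity and envelope bookkeeping that your additive perturbation $\psi\pm th$ requires. Extremality then forces, via Lemma~\ref{der}, the identities $\int_{p\geq sq}\alpha q\,d\lambda=\int_{p\geq sq}\alpha p\,d\lambda=0$ for all $s>0$; taking $\alpha$ to be a step function over the $k+3$ level sets determined by $k+2$ thresholds, the paper counts $k+2$ linear constraints against $k+3$ unknowns to conclude that one level set must be null, and a separate combinatorial lemma (Lemma~\ref{ss}) converts ``at most $k+2$ likelihood-ratio levels carry mass'' into $\psi=\psi_{P',Q'}$ with $P',Q'\in\Ps_{k+2}$. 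Your route replaces Lemma~\ref{ss} by the (correct) dictionary between piecewise linear members of $\C$ and pairs in $\Ps_{r+2-b}$, and replaces the level-set counting by a dimension count on curvature densities $-h''=g\,(-\psi'')$ with $|g|\leq C$. The trade-off is exactly where you locate it: in the paper, admissibility of the perturbed pair is essentially free and the only estimate needed is $\int|\alpha r_i|\,d\lambda\leq D_i<\infty$, whereas you must establish the two-sided dominations $|h|\leq C\left(\min(1,s)-\psi(s)\right)$ and $|h'|\leq C\,\partial^r\psi$, which simultaneously deliver membership in $\C$, the envelope constraint, and the integrability $\int|h|\,d\nu_{f_i}\leq C\,I_{f_i}(\psi)$ that handles non-finite $D_{f_i}$ in the $\C_1$ case and explains the finiteness hypothesis for $\C_2$. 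One small imprecision to repair: when a boundary is attained only in the limit (e.g.\ $\partial^r\psi(0)=1$ but $\psi(s)<s$ for all $s>0$, or $\psi(\infty)=1$ with $\psi<1$ everywhere) there is no ``piece'' on which to pin $h\equiv 0$; the correct replacement is the single linear condition $h'(0^+)=0$, respectively $\lim_{s\to\infty}h(s)=0$, which preserves your count of $r-b$ free parameters.
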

\begin{remark}\label{cal}
When $m = k = 0$, the sets $\cap_i \C_1(f_i, D_i)$ and
$\cap_i\C_2(f_i, D_i)$ can both be taken to be equal to $\C$. As will
be clear from the 
proof, the above theorem will also be true in this case where it
states that a necessary condition for a function $\psi$  to be extreme
in $\C$ is that $\psi$ equals $\psi_{P, Q}$ for two probability
measures $P, Q \in \Ps_2$. 
\end{remark}

The proof of Theorem \ref{charex} relies on the following lemma whose proof is provided after the proof of Theorem~\ref{charex}.
\begin{lemma}\label{ss}
  Let $P$ and $Q$ be two probability measures on a space $\samp$
  having densities $p$ and $q$ with respect to $\lambda$. Let $l \geq
  1$ be fixed. Suppose that for every decreasing sequence $s_1 >
  \dots > s_l$ of positive real numbers, the following condition
  holds: 
  \begin{equation*}
    \min_{1 \leq j \leq l+1} \left(P(B_j) + Q(B_j) \right) = 0
  \end{equation*}
where $B_1 = \{p \geq q s_1 \}, B_i = \{q s_i \leq p < q s_{i-1} \}$
for $i = 2, \dots, l$ and $B_{l+1} = \{p < q s_{l} \}$. Then
$\psi_{P, Q}$ can be written as $\psi_{P', Q'}$ for two probability
measures $P', Q' \in \Ps_l$. 
\end{lemma}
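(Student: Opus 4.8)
The plan is to reduce everything to the one-dimensional law of the likelihood ratio $T := p/q$, and to read off both $\psi_{P,Q}$ and the hypothesis from that law. First I would record the representation $\psi_{P,Q}(s) = \int_{[0,\infty)} \min(t,s)\, d\mu_Q(t)$, where $\mu_Q$ is the distribution of $T$ under $Q$ (a probability measure on $[0,\infty)$, since $Q\{q=0\}=0$). This follows directly from the definition of $\psi_{P,Q}$ by splitting $\int \min(p,qs)\,d\lambda$ over $\{q>0\}$ and $\{q=0\}$; alternatively it is immediate from Lemma~\ref{der}, which gives $\partial^r \psi_{P,Q}(s) = Q\{p > sq\} = \mu_Q\bigl((s,\infty)\bigr)$, so that $\psi_{P,Q}$ is the concave function whose right derivative is the survival function of $\mu_Q$ and is therefore completely determined by $\mu_Q$. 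The upshot is that to realize $\psi_{P,Q}$ by a pair in $\Ps_l$ it suffices to realize the measure $\mu_Q$ by a finitely supported likelihood-ratio law.

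Next I would translate the hypothesis into a statement about the law $\rho$ of $T$ on the compactified half-line $[0,\infty]$ under $P+Q$, where the atom at $\infty$ carries mass $a_\infty := P\{q=0\}$ and the value $0$ collects $\{p=0\}$. The sets $B_1,\dots,B_{l+1}$ are exactly the $T$-preimages of the $l+1$ consecutive intervals $[s_1,\infty],\ [s_2,s_1),\ \dots,\ [s_l,s_{l-1}),\ [0,s_l)$ that the cut points $s_1>\dots>s_l$ carve out of $[0,\infty]$. Hence the hypothesis says precisely that for every choice of $l$ interior cut points, one of these $l+1$ intervals is $\rho$-null. I claim this forces $\rho$ to be supported on at most $l$ points: if there were $l+1$ distinct support points $y_1<\dots<y_{l+1}$, I would insert a cut $c_j\in(y_j,y_{j+1})$ for each $j$ (each lands in $(0,\infty)$ because $y_2>0$ and $y_l<\infty$), relabel them as $s_1>\dots>s_l$, and observe that each of the $l+1$ resulting intervals is a relatively open neighborhood of one of the $y_j$ and so carries positive $\rho$-mass, contradicting the hypothesis.

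With the support of $\rho$ reduced to at most $l$ points, I would conclude that $\mu_Q$, being dominated by $\rho$, is finitely supported with atoms among the finite support points of $\rho$; moreover, if $a_\infty>0$ then $\infty$ is itself a support point of $\rho$, so in that case $\mu_Q$ has at most $l-1$ atoms. Finally I would construct the representing pair: for each finite atom $x_j$ of $\mu_Q$, of mass $b_j$, assign a coordinate with $q'_j=b_j$ and $p'_j=x_j b_j$, add one coordinate with $q'=0,\ p'=a_\infty$ when $a_\infty>0$, and pad the rest with zeros, so that $P',Q'\in\Ps_l$. A short check then shows $\sum_j q'_j = \mu_Q([0,\infty)) = 1$ and $\sum_j p'_j=\int t\,d\mu_Q(t)+a_\infty=P\{q>0\}+P\{q=0\}=1$, and that the likelihood-ratio law of this discrete pair under $Q'$ is again $\mu_Q$ (at the $j$th coordinate the ratio is $p'_j/q'_j=x_j$ with $Q'$-mass $b_j$). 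By the representation of the first paragraph, $\psi_{P',Q'}=\psi_{P,Q}$.

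I expect the delicate step to be the second paragraph, namely the bookkeeping at the two singular values $T=0$ and $T=\infty$. One must verify that the half-open interval structure, together with the constraint that cut points be finite and strictly positive, does not prevent isolating a support point sitting at $0$ or at $\infty$, and that reserving one coordinate for the singular mass $a_\infty$ (when it is present) is exactly offset by the fact that $\infty$ then consumes one of the at-most-$l$ support points, keeping the total number of coordinates at most $l$.
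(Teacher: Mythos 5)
Your proposal is correct and follows essentially the same route as the paper: both arguments reduce the hypothesis to the statement that the likelihood ratio $p/q$ takes at most $l$ essentially distinct values (you phrase this as the pushforward of $p/q$ under $P+Q$ having at most $l$ support points; the paper equivalently shows the survival function $s \mapsto \eta\{p \geq qs\}$ has at most $l-1$ values in $(0,1)$ and then isolates the level sets $\{p = qs_i^*\}$), and both then build $P', Q'$ by assigning to each atom of that law its $P$- and $Q$-masses, with one coordinate reserved for $P\{q=0\}$. Your packaging via the support of the pushforward measure cleanly absorbs the boundary bookkeeping at $0$ and $\infty$ that the paper handles by hand through the $s_1^* = \infty$ and $s_{k+1}^* = 0$ cases, but the underlying argument is the same.
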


\begin{proof}[Proof of Theorem~\ref{charex}]
Let $\psi$ be a function in $ext(\cap_i \C_1(f_i, D_i))$. Since $\psi
\in \C$, we can write $\psi(s) = \psi_{P,Q}(s) =  \int \min(p, sq)
d\lambda$ for some probability measures $P$ and $Q$ on a measurable
space $\samp$ having densities $p$ and $q$ with respect to a common
sigma finite measure $\lambda$. Without loss of generality, we assume
that  
\begin{equation}\label{tin}
  I_{f_i}(\psi) = D_{f_i}(P||Q) = D_i \qt{for $i = 1, \dots, k$}
\end{equation}
and
\begin{equation}\label{ntin}
  I_{f_i}(\psi) = D_{f_i}(P||Q) < D_i \qt{for $i = k+1, \dots, m$}. 
\end{equation}

Let $\alpha: \samp \rightarrow (-1, 1)$ be a function satisfying
\begin{equation}\label{intzeropq}
\int \alpha p d\lambda  = \int \alpha q d\lambda = 0. 
\end{equation}
Note that $(1 + \alpha)p, (1-\alpha)p, (1+\alpha)q$ and $(1 -
\alpha)q$ are all probability densities with respect to $\lambda$.
Let $P^+, P^-, Q^+,Q^-$ be probability measures having densities
$p_+:=(1+\alpha)p$, $p_-:=(1-\alpha)p$, $q_+:=(1+\alpha)q$,
$q_-:=(1-\alpha)q$ respectively with respect to $\lambda$. Also, let 
\begin{equation*}
  \psi_+(s) := \psi_{P^+, Q^+}(s) = \int (1+\alpha) \min(p, sq) d\lambda
\end{equation*}
and 
\begin{equation*}
  \psi_-(s) := \psi_{P^-, Q^-}(s) = \int (1-\alpha) \min(p, sq) d\lambda
\end{equation*}
so that $\psi = (\psi_+ + \psi_-)/2$. For every $i = 1, \dots, m$, we
observe that
\begin{align*}
  I_{f_i}(\psi_+) &= D_{f_i}(P_+||Q_+) \\
&= \int q_+ f_i \left(\frac{p_+}{q_+} \right) d\lambda + f_i'(\infty) P_+
\left\{q_+ = 0 \right\}. 
\end{align*}
Writing $(1+\alpha)p$ and $(1+\alpha)q$ for $p_+$ and $q_+$
respectively and noting that $1 + \alpha > 0$ because $\alpha$ takes
values in $(-1, 1)$, we obtain
\begin{equation}\label{plus}
  I_{f_i}(\psi_+) = I_{f_i}(\psi) + \int \alpha r_i d\lambda
\end{equation}
where
\begin{equation*}
  r_i := q f_i \left(\frac{p}{q} \right) + f_i'(\infty) p \left\{ q =
    0 \right\}. 
\end{equation*}
It follows similarly that 
\begin{equation}\label{minus}
  I_{f_i}(\psi_-) = I_{f_i}(\psi) - \int \alpha r_i d\lambda
\end{equation}
We observe that $\int r_i d\lambda \leq D_i$ for each $i = 1, \dots,
m$ which implies that 
\begin{equation}\label{j1}
\int |\alpha r_i| d\lambda < \infty
\end{equation}
for every function $\alpha$ that takes values in $(-1, 1)$ and $i = 1,
\dots, m$. 

From~\eqref{tin},~\eqref{plus} and~\eqref{minus}, it follows that the two
inequalities: 
\begin{equation}\label{intalphar}
  I_{f_i}(\psi_+) \leq D_i ~ \text{ and } ~ I_{f_i}(\psi_-) \leq D_i
\end{equation}
will be satisfied for $i = 1, \dots, k$ if and only if 
\begin{equation}\label{cond1}
  \int \alpha r_i d\lambda = 0 \qt{for $i = 1, \dots, k$}. 
\end{equation}
Moreover, from~\eqref{ntin},~\eqref{plus} and~\eqref{minus}, it
follows that if $\sup_{x \in \samp}|\alpha(x)|$ is sufficiently small,
then~\eqref{intalphar} will be satisfied also for $i = k+1, \dots,
m$. Let us say that $\alpha$ is a good function if it
satisfies~\eqref{intzeropq} and~\eqref{cond1} and if 
$\sup_{x} |\alpha(x)|$ is sufficiently small. We have thus proved that
if $\alpha$ is a good function, then both $\psi_+$ and $\psi_-$ belong
to $\cap_i \C_1(f_i, D_i)$. Because $\psi$ is extreme and $\psi =
(\psi_+ + \psi_-)/2$, we assert that $\psi = \psi_+ = \psi_-$ for
every good function $\alpha$. As a result, $\partial^l \psi(s)
= \partial^l \psi_+(s)$ for every $s > 0$ and $\partial^r \psi(s)
= \partial^r \psi_+(s)$ for every $s \geq 0$. Because of
Lemma~\ref{der} and the relations $p_+ = (1+\alpha)p$ 
and $q_+ = (1+\alpha)q$, we get that
\begin{equation}\label{g1}
  \int_{p \geq sq} \alpha q d\lambda = 0 ~~ \text{ and } ~~   \int_{p
    > sq} \alpha q d\lambda = 0 
\end{equation}
for every $s > 0$. On the other hand, the equality
$s\psi(1/s) = s \psi_+(1/s)$ for every $s > 0$ implies that $\psi_{Q, P}(s) =
\psi_{Q^+,P^+}(s)$.    Reversing the role of $q$ and $p$ in the argument that led to equation~\eqref{g1}, we equate derivatives and use $\int
\alpha p d\lambda = 0$ to get
 \begin{equation}\label{g2}
  \int_{p \geq sq} \alpha p d\lambda = 0 ~~ \text{ and } ~~   \int_{p
   > sq} \alpha p d\lambda = 0 
\end{equation}
for every $s > 0$. We have therefore shown that both~\eqref{g1}
and~\eqref{g2} hold for every $s > 0$ whenever $\alpha$ is a good
function. We now show that for every decreasing sequence $s_1 >
\dots > s_{k+2}$ of positive real numbers, the following condition
must hold  
\begin{equation}\label{cruc}
  \min_{1 \leq j \leq k+3} \left(P(B_j) + Q(B_j) \right) = 0
\end{equation}
where $B_1 = \{p \geq q s_1\}$, $B_i = \{
q s_i \leq p < q s_{i-1}\}$ for $i = 2, \dots, k+2$, and $B_{k+3} = \{p <
q s_{k+2}\}$. The proof would then be completed by Lemma~\ref{ss}. 

We prove~\eqref{cruc} via contradiction. Suppose that the
condition~\eqref{cruc} does not hold for some $s_1 > \dots >
s_{k+2}$. Let $\alpha = \sum_{j=1}^{k+3} \alpha_j I_{B_j}$ where
$\alpha_1, \dots, \alpha_{k+3}$ are real numbers in $(-1, 1)$ and
$I_{B_j}$ denotes the indicator function of $B_j$. We claim that for
this $\alpha$, the conditions~\eqref{g1} and~\eqref{g2} cannot hold
unless $\alpha_1 = \dots = \alpha_{k+3} = 0$. To see this, note
that~\eqref{g1} and~\eqref{g2} for $s = s_1$ give $\alpha_1
(P(B_1) + Q(B_1)) = 0$. But since $P(B_1) + Q(B_1)$ is
strictly positive (we are assuming that~\eqref{cruc} does not hold),
it follows that $\alpha_1=0$. We now use~\eqref{g1} and~\eqref{g2} for
$s = s_2$ to obtain $\alpha_2 = 0$. Continuing this argument, we get 
that~\eqref{g1} and~\eqref{g2} cannot hold unless $\alpha_1 = \dots =
\alpha_{k+3} = 0$. As a result, it follows that $\alpha =
\sum_{j=1}^{k+3} \alpha_j I_{B_j}$ is not a good function for every
non-zero vector $(\alpha_1, \dots, \alpha_{k+3})$ in $\R^{k+3}$.  

On the other hand, as can be easily seen by writing down the
conditions~\eqref{intzeropq} and~\eqref{cond1}, for $\alpha =
\sum_{j=1}^{k+3} \alpha_j I_{B_j}$ to be a good function, $\max_j |\alpha_j|$ needs to 
be sufficiently small and the following equalities need to be
satisfied:  
\begin{equation*}
  \sum_{j=1}^{k+3} \alpha_j P(B_j) = 0 =   \sum_{j=1}^{k+3} \alpha_j Q(B_j)
\end{equation*}
and
\begin{equation*}
  \sum_{j=1}^{k+3} \alpha_j \int_{B_j} r_i d\lambda = 0 \qt{for $i =
    1, \dots, k$}.
\end{equation*}
If~\eqref{cruc} is not satisfied, then the above represent $k+2$
linear equalities for the $k+3$ variables $\alpha_1, \dots,
\alpha_{k+3}$. Therefore, a solution exists where $\alpha_1, \dots,
\alpha_{k+3}$ are non-zero (and where $\max_j |\alpha_j|$ is small)
for which $\alpha = \sum_{j=1}^{k+3} \alpha_j I_{B_j}$ becomes a good
function. Since this is a contradiction, we have
established~\eqref{cruc}. 

By Lemma~\ref{ss}, it follows that $\psi$ can be written as $\psi_{P',
Q'}$ for two probability measures $P'$ and $Q'$ on $\{1, \dots,
k+2\}$. This proves the first part of the theorem. The case of $\cap_i
\C_2(f_i, D_i)$ is very similar. In the above argument, the only place
where we used the fact that the constraints in $\cap_i \C_1(f_i, D_i)$
are of the $\leq$ form is in asserting~\eqref{j1}. In the
case of $\cap_i \C_2(f_i, D_i)$, the statement~\eqref{j1} still holds
under the assumption that each divergence $D_{f_i}$ is finite. The
rest of the proof proceeds exactly as before. 
\end{proof}

Below, we provide the proof of Lemma~\ref{ss} which was used in the
above proof. 
\begin{proof}[Proof of Lemma~\ref{ss}]
  Let $\eta$ denote the probability measure $(P + Q)/2$. Suppose 
  \begin{equation*}
    N := \left\{x \in (0, 1): x = \eta \{p \geq q s \} \text{ for some
      } s \in (0, \infty)\right\}. 
  \end{equation*}
  We claim that $N$ is a finite set having cardinality at most
  $l-1$. To see this, suppose, if possible, that there exist points
  $0 < x_1 < \dots < x_l < 1$ in $N$. Then, we can
  write $x_i = \eta \{ p \geq q s_i\}$ for some $s_1 > \dots > s_l >
  0$. But then $\eta (B_1) = x_1$, $\eta(B_i) = x_{i} - x_{i-1} > 0$
  for $i = 2, \dots, l$ and $\eta(B_{l+1}) = 1 - x_l > 0$ which
  contradicts the condition given in the lemma. Let us therefore
  assume that the cardinality of $N$ equals $k \leq l-1$ and let $N =
  \{x_1, \dots, x_k\}$ where $0 < x_1 < \dots < x_k < 1$. Let 
  \begin{equation*}
    s_i^* := \sup \left\{s > 0 : \eta \{p \geq q s\} = x_i  \right\}
  \end{equation*}
for $i = 1, \dots, k$. Also let 
\begin{equation*}
  s_{k+1}^* := \sup \left\{s > 0: \eta\{p \geq qs \} = 1 \right\}
\end{equation*}
if there exists $s > 0$ with $\eta \{ p \geq qs \} = 1$. If there
exists no such $s > 0$, we define $s_{k+1}^* = 0$. It is easy to see
that $s_1^* \in (0, \infty]$ and $s_{k+1}^* \in [0, \infty)$ while
$s_2^*, \dots, s_k^* \in (0, \infty)$. Let us first consider the case
when $s_1^* < \infty$ and $s_{k+1}^* > 0$. In this case, for each $i =
1, \dots, k+1$, there exists a sequence $\{t_n(i)\}$ with $0 < t_n(i)
\uparrow s_{i}^*$ such that $\eta \{p \geq q t_n(i) \} = x_i$ (we take
$x_{k+1} = 1$). Because the sets $\{p \geq q t_n(i) \}$ decrease to
$\{p \geq q s_i^* \}$ as $n \rightarrow \infty$, it follows that $\eta
\{p \geq q s_i^*\} = x_i$  for each $i = 1, \dots, k+1$. Also it is easy
to see that 
\begin{equation*}
  \eta \{p > q s_i^* \}  = \lim_{s \downarrow s_i^*} \eta \{p \geq q s
  \} = x_{i-1}
\end{equation*}
for each $i = 1, \dots, k+1$ where we take $x_0 = 0$. It follows
therefore that $\eta\{p = q s_i^*\} = x_i - x_{i-1}$ for $1 \leq i
\leq k+1$. Because $\sum_{i=1}^{k+1} (x_i - x_{i-1}) = x_{k+1} - x_0 =
1$, it follows that 
\begin{equation}\label{mai}
  \sum_{i=1}^{k+1} \eta \{p = qs_i^* \} = 1. 
\end{equation}
It can be checked that the above statement is also true in the case
when $s_1^* = \infty$ and/or $s_{k+1}^* = 0$ provided we interpret 
\begin{equation*}
   \{ p = q \cdot \infty \} =  \{q = 0 \}~ \text{ and }~  \{p = q
  \cdot 0\} =  \{ p = 0 \}. 
\end{equation*}
The equality~\eqref{mai} is the same as
\begin{equation}\label{remai}
  \sum_{i=1}^{k+1} P \{p = qs_i^* \} = 1 ~ \text{ and } ~
  \sum_{i=1}^{k+1} Q \{p = q s_i^* \} = 1. 
\end{equation}
Let $p_i = P \{ p = q s_i^* \}$ and $q_i = Q\{ p = q s_i^* \}$ for
$i = 1, \dots, k+1$ so that $P' = (p_1, \dots, p_{k+1})$ and $Q' =
(q_1, \dots, q_{k+1})$ are probability measures on $\{1, \dots,
k+1\}$. For each $i = 1, \dots, k+1$, we have
\begin{equation*}
  p_i = P \{ p = q s_i^*\} = \int_{p = qs_i^*} p d\lambda = s_i^* \int_{p =
    q s_i^*} q d\lambda =  s_i^* q_i
\end{equation*}
where the above statement is to be interpreted as $q_1 = 0$ if $s_1^*
= \infty$ and as $p_{k+1} = 0$ if $s_{k+1}^* = 0$. Also
\begin{equation*}
  \int_{p = qs_i^*} \min(p, qs) d\lambda = \min(s_i^*, s) Q\{p =
  qs_i^*\} = \min(p_i, q_i s)
\end{equation*}
for every $s \geq 0$ and $i = 1, 2, \dots, k+1$. Therefore, 
\begin{align*}
  \psi_{P, Q}(s) &= \int \min(p, qs) d\lambda \\
&= \sum_{i=1}^{k+1} \int_{p = q s_i^*} \min(p, qs) d\lambda =
\psi_{P', Q'}(s). 
\end{align*}
The proof is complete because $k+1 \leq l$. 
\end{proof}
\subsection{Completion of the Proof}\label{fp}
We shall prove~\eqref{main.1}. The proof of~\eqref{main.2} is entirely
analogous. Theorem~\ref{charex} states that every function in $\cap_i
\C_1(f_i, D_i)$ that is extreme equals $\psi_{P, Q}$ for some $P, Q
\in \Ps_{m+2}$. Therefore, by Lemma~\ref{redex}, we get that $A(D_1,
\dots, D_m)$ equals 
\begin{equation*}
\sup \left\{I_f(\psi_{P, Q}): \psi_{P, Q} \in \cap_{i=1}^m \C_1(f_i,
  D_i) \text{ and } P, Q \in \Ps_{m+2} \right\}.  
\end{equation*}
Because $I_{f_i}(\psi_{P, Q})$ equals $D_{f_i}(P||Q)$, the constraint 
$\psi \in \cap_i \C_1(f_i, D_i)$ is equivalent to $D_{f_i}(P||Q) \leq
D_i$ for all $i = 1, \dots, m$. The proof is therefore complete. 

\section{Remarks and Extensions}\label{remex}
\subsection{Stronger Version}\label{stv}
The proof of Theorem~\ref{main} actually yields a smaller expression
for $A(D_1, \dots, D_m)$ than $A_{m+2}(D_1, \dots, D_m)$ and a larger
expression for $B(D_1, \dots, D_m)$ than $B_{m+2}(D_1, \dots,
D_m)$. For each subset $J$ of $\{1, \dots, m\}$, let $A^J(D_1, \dots,
D_m)$ denote the supremum of $D_f(P||Q)$ over all probability measures
$P, Q \in \Ps_{k+2}$ (where $k$ is the cardinality of $J$) for which
$D_{f_i}(P||Q) = D_i$ for $i \in J$ and $D_{f_i}(P||Q) < D_i$ for $i
\notin J$. It is clear that  
\begin{equation*}
  A^J(D_1, \dots, D_m) \leq A_{m+2}(D_1, \dots, D_m)
\end{equation*}
for each $J \subseteq \{1, \dots, m\}$. The following is therefore a
stronger version of Theorem~\ref{main}: 
\begin{equation}\label{sva}
  A(D_1, \dots, D_m) = \max_{J \subseteq \{1, \dots, m \}} A^J(D_1,
  \dots, D_m) 
\end{equation}
An analogous statement also holds for $B(D_1, \dots, D_m)$. Let us now
show that our proof of Theorem~\ref{main} given in Section~\ref{fp}
results in~\eqref{sva}. By Theorem~\ref{charex}, every function $\psi$ in
$\cap_i \C_1(f_i, D_i)$ that is extreme equals $\psi_{P, Q}$ for some
$P, Q \in \Ps_{k+2}$ where $k$ is the number of indices $i$ for which
$I_{f_i}(\psi) = D_{f_i}(P||Q) = D_i$. Therefore, if $J$ denotes these
indices, then 
\begin{align*}
  I_f(\psi) = D_f(P||Q) &\leq A^J(D_1, \dots, D_m) \\
 &\leq \max_{J \subseteq \{1, \dots, m \}}
  A^J(D_1, \dots, D_m)
\end{align*}
for every $\psi \in ext(\cap_i \C_1(f_i, D_i))$. The
equality~\eqref{sva} therefore follows from Lemma~\ref{redex}.  

\subsection{Joint Ranges}\label{hash}
Recall that the joint range of divergences $D_{f_1}, \dots, D_{f_m}$
is denoted by $\jr(f_1, \dots, f_m)$ and is defined as the set of all
vectors in $\R^m$ that equal $(D_{f_1}(P||Q), \dots, D_{f_m}(P||Q))$
for some $P$ and $Q$. The quantities $A(D_1, \dots, D_m)$ and $B(D_1,
\dots, D_m)$ can easily be calculated from knowledge of $\jr(f, f_1,
\dots, f_m)$. It therefore makes sense to try to prove
Theorem~\ref{main} by trying to determine the joint range $\jr(f, f_1,
\dots, f_m)$. We argue here that this approach is not good enough to
prove Theorem~\ref{main}; it results in the weaker
identities~\eqref{ref1} and~\eqref{ref2}. 

In the following theorem, we characterize the joint range $\jr(f_1,
\dots, f_m)$ for every arbitrary set of $m$ divergences. We show that
it suffices to restrict attention to pairs of probability measures in
$\Ps_{m+2}$. For each $k \geq 1$, let 
\begin{equation*}
  \jr_k(f_1, \dots, f_m) := \left\{(D_{f_1}(P||Q), \dots,
    D_{f_m}(P||Q)) : P, Q \in \Ps_k \right\}. 
\end{equation*}
\begin{theorem}\label{hvgen}
  For every $m \geq 1$ and divergences $D_{f_1}, \dots, D_{f_m}$, we
  have 
  \begin{equation*}
    \jr(f_1, \dots, f_m) = \jr_{m+2}(f_1, \dots, f_m). 
  \end{equation*}
\end{theorem}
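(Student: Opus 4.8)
The inclusion $\jr_{m+2}(f_1,\dots,f_m)\subseteq\jr(f_1,\dots,f_m)$ is immediate, so the plan is to establish the reverse inclusion by a support‑reduction argument. Fix $P,Q$ and set $\lambda=P+Q$, with densities $p,q$ satisfying $p+q=1$ $\lambda$-a.e. Pushing $\lambda$ forward under $x\mapsto p(x)$ produces a nonnegative measure $\mu$ on $[0,1]$ of total mass $2$ with $\int t\,d\mu(t)=\int p\,d\lambda=1$. Defining $g_i(t):=(1-t)f_i\!\left(t/(1-t)\right)$ for $t\in[0,1)$ and $g_i(1):=f_i'(\infty)$ (so that $g_i(0)=f_i(0)$ and $g_i$ is continuous into $(-\infty,+\infty]$), a change of variables that treats the singular part $\{q=0\}=\{t=1\}$ separately turns the definition of the divergence into
\begin{equation*}
D_{f_i}(P||Q)=\int_{[0,1]}g_i\,d\mu\qquad(i=1,\dots,m).
\end{equation*}
Thus each joint–range vector is the image of some such $\mu$ under the $m$ linear functionals $\mu\mapsto\int g_i\,d\mu$.

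Conversely, any finitely supported $\nu=\sum_j c_j\delta_{t_j}$ on $[0,1]$ with $\sum_j c_j=2$ and $\sum_j c_j t_j=1$ is realizable: on the point set $\{j\}$ put $P'(\{j\})=c_jt_j$ and $Q'(\{j\})=c_j(1-t_j)$. Then $\sum_j P'(\{j\})=1$ and $\sum_j Q'(\{j\})=2-1=1$, so $P',Q'$ are probability measures, the law of $dP'/d(P'+Q')$ under $P'+Q'$ is exactly $\nu$, and hence $D_{f_i}(P'||Q')=\int g_i\,d\nu$ for every $i$. Consequently, proving $\jr=\jr_{m+2}$ reduces to the statement that $\mu$ can be replaced by a finitely supported measure with at most $m+2$ atoms without changing any of the $m+2$ linear functionals $\mu\mapsto\int g_i\,d\mu$ $(i=1,\dots,m)$, $\mu\mapsto\int 1\,d\mu$, and $\mu\mapsto\int t\,d\mu$. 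The last two functionals must be retained to preserve the total mass $2$ and the moment $\int t\,d\mu=1$, which together guarantee realizability; this is exactly where the count $m+2$ comes from.

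This support reduction is a Carathéodory/Richter‑type fact: if $h_1,\dots,h_N$ are $\mu$-integrable real functions, then there is a measure supported on at most $N$ points matching $\int h_j\,d\mu$ for all $j$ (equivalently, $\int H\,d\mu$ lies in the convex cone generated by the curve $H=(h_1,\dots,h_N)$, and a point of a cone in $\R^N$ is a nonnegative combination of at most $N$ of its generators). I would apply this with $N=m+2$ and $(h_1,\dots,h_{m+2})=(g_1,\dots,g_m,\mathbf 1,\mathrm{id})$. Every element of $\jr$ has finite coordinates by definition; and since $f_i$ lies above its supporting line at $1$, one checks $g_i(t)\ge f_i'(1)(2t-1)$, which is bounded below on $[0,1]$, so $D_{f_i}(P||Q)=\int g_i\,d\mu<\infty$ forces $g_i\in L^1(\mu)$ and $g_i<\infty$ $\mu$-a.e. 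The reduction therefore yields a $\nu$ with at most $m+2$ atoms, and the realizability construction produces $P',Q'\in\Ps_{m+2}$ with the same divergence vector.

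I expect the change‑of‑variables identity, the realizability of discrete $\mu$, and the reconstruction of $P',Q'$ to be routine. The one genuine obstacle is the endpoint bookkeeping at $t\in\{0,1\}$, where $g_i$ may equal $+\infty$ (when $f_i(0)$ or $f_i'(\infty)$ is infinite), since the Carathéodory/Richter step needs real‑valued integrands. The clean way around this is the observation just made: each $g_i$ is bounded below, so finiteness of the target divergences confines all mass relevant to the integrals to the full‑measure set on which $G=(g_1,\dots,g_m,\mathbf 1,\mathrm{id})$ is real‑valued, and the reduction applies there verbatim.
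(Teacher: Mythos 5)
Your proof is correct and arrives at the same count $m+2$ as the paper, but by a noticeably different route. The paper parametrizes by the likelihood ratio $s=p/q$ and integrates the connected curve $s\mapsto(s,f_1(s),\dots,f_m(s))$ in $\R^{m+1}$ against $dQ$ on $\{q>0\}$; it then invokes the refinement of Carath\'eodory's theorem for \emph{connected} sets (a point of the convex hull is a convex combination of $m+1$, not $m+2$, points) and spends one extra atom on the singular piece $P\{q=0\}$, giving $m+1+1=m+2$ atoms. You instead symmetrize via $t=p/(p+q)$, which folds the set $\{q=0\}$ into the endpoint $t=1$ of a single finite measure $\mu$ of mass $2$ on $[0,1]$, and you apply the cone version of Carath\'eodory (through a Richter-type moment reduction) in $\R^{m+2}$ to the curve $(g_1,\dots,g_m,\mathbf 1,\mathrm{id})$. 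Your version buys three things: you do not need the connectedness refinement, since plain cone Carath\'eodory in $\R^{m+2}$ already yields $m+2$ generators; the singular part needs no special casing; and your explicit handling of the points where $g_i=+\infty$ (via the lower bound coming from the supporting line of $f_i$ at $1$, which makes each $g_i$ bounded below, together with the finiteness of the coordinates of any joint-range vector) patches an edge case the paper passes over silently, namely that the point $s=0$ of its curve $S$ has an infinite coordinate whenever some $f_i(0)=\infty$. What the paper's version buys is that it quotes only the (cited) Carath\'eodory theorem for connected sets, whereas your one asserted-but-unproved step is the membership $\int G\,d\mu\in\mathrm{cone}\bigl(G(T)\bigr)$; since that cone need not be closed, a naive separation argument does not suffice, so if you keep this route you should either cite the Richter--Rosenbloom theorem explicitly or include its short induction-on-dimension proof.
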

For the special case $m = 2$, this theorem has
already been proved by~\cite{HarremoesVajda}. The short proof given
below uses the Caratheodory theorem and was communicated to us by an
anonymous referee. In contrast, the proof given
in~\cite{HarremoesVajda} for $m = 2$ is much more elaborate. The
counterexamples in~\cite{HarremoesVajda} show the tightness of this
theorem. After the proof, we describe an attempt to prove
Theorem~\ref{main} via Theorem~\ref{hvgen}. 
 \begin{proof}
  We just need to prove that $\jr(f_1, \dots, f_m) \subseteq
  \jr_{m+2}(f_1, \dots, f_m)$. Let $u \in \jr(f_1, \dots, f_m)$. Then
  $u = (D_{f_1}(P||Q), \dots, D_{f_m}(P||Q))$ for some pair of
  probability measures $P$ and $Q$. If $p$ and $q$ denote the
  densities of $P$ and $Q$ with respect to a common measure $\lambda$,
  then  
  \begin{equation}\label{nz1}
 u = \int_{\{ q > 0\}} \left(f_1 \left(\frac{p}{q} \right), \dots, f_m
 \left(\frac{p}{q} \right)\right) dQ +  P 
    \{q = 0 \} \left(f_1'(\infty), \dots,
 f_m'(\infty) \right). 
  \end{equation}
  Let $S \subseteq \R^{m+1}$ be defined by $S := \{(s, f_1(s), \dots,
  f_m(s)): s \geq 0 \}$. Then clearly the vector  
  \begin{equation*}
    \int_{\{ q > 0\}} \left(\frac{p}{q}, f_1 \left(\frac{p}{q}
      \right), \dots, f_m \left(\frac{p}{q} \right)\right) dQ 
  \end{equation*}
  lies in the convex hull of $S$. Because $S$ is a connected subset of
  $\R^{m+1}$, we can use Caratheodory's theorem (see, for
  example,~\cite{BaranyKarasev}) to assert that any point in its
  convex hull can be   written as a convex combination of at most
  $m+1$ points in $S$. As a result, we can write 
  \begin{equation}\label{nz2}
    \int_{\{ q > 0\}} \left(\frac{p}{q}, f_1 \left(\frac{p}{q}
      \right), \dots, f_m \left(\frac{p}{q} \right)\right) dQ  =
    \sum_{i=1}^{m+1} \alpha_i \left(s_i, f_1(s_i), \dots, f_m(s_i)
    \right) 
  \end{equation}
  for some $\alpha_1, \dots, \alpha_{m+1} \geq 0$ with $\sum_{i}
  \alpha_i = 1$ and $s_1, \dots, s_{m+1} \geq 0$. One consequence of
  this representation is that 
  \begin{equation}\label{ru}
    \sum_{i=1}^{m+1} \alpha_i s_i = \int_{q > 0}\left( \frac{p}{q}
    \right) dQ = P \{q > 0 \}. 
  \end{equation}
  We now define two probability measures $P'$ and $Q'$ in $\Ps_{m+2}$
  as follows: $P'\{i+1\} = \alpha_i s_i$ for $1 \leq i \leq m+1$ and
  $P'\{1\} = P\{q=0\}$; and $Q'\{i+1\} = \alpha_i$ for $1 \leq i \leq
  m+1$ and $Q'\{1\} = 0$. The fact that $\sum_{i=1}^{m+2} P'\{i\} =
  1$ follows from~\eqref{ru}. The equalities~\eqref{nz1}
  and~\eqref{nz2} together clearly imply that $u = (D_{f_1}(P'||Q'),
  \dots, D_{f_m}(P'||Q'))$. Thus $u \in \jr_{m+2}(f_1, \dots, f_m)$
  and this completes the proof. 
\end{proof}
Clearly $A(D_1, \dots, D_m)$ and $B(D_1, \dots, D_m)$ can be written
as functions of the joint range $\jr(f, f_1, \dots,
f_m)$. Theorem~\ref{hvgen} immediately therefore implies
\begin{equation}\label{ref1}
A(D_1, \dots, D_m) = A_{m+3}(D_1, \dots, D_m)  
\end{equation}
and 
\begin{equation}\label{ref2}
  B(D_1, \dots, D_m) = B_{m+3}(D_1, \dots, D_m). 
\end{equation}
These results are clearly weaker than those given by
Theorem~\ref{main}. Strictly speaking, one can deduce a slightly
stronger conclusion than~\eqref{ref1} and~\eqref{ref2} from
Theorem~\ref{hvgen}. A probability measure on $\{1, \dots, m+3\}$ is
determined by $m+2$ real numbers. Therefore, a pair of probability
measures in $\Ps_{m+3}$ are determined by $2m + 4$ real numbers. The
inequalities~\eqref{ref1} and~\eqref{ref2} therefore reduce the
optimization problems for $A(D_1, \dots, D_m)$ and $B(D_1, \dots,
D_m)$ into optimization problems over $2m + 4$ variables. A closer
inspection at the proof of Theorem~\ref{hvgen} shows that one actually
gets a reduction to $2m + 3$ variables. This is because the
probability measure $Q'$ in the proof satisfies $Q'\{1\} =
0$. Therefore, by an argument based solely on the joint 
range of $D_f, D_{f_1}, \dots, D_{f_m}$, one can
reduce the optimization problems for $A(D_1, \dots, D_m)$ and $B(D_1,
\dots, D_m)$ into optimization problems over $2m + 3$
variables. Because of the tightness of Theorem~\ref{hvgen}, this is
the best reduction that one can hope for the quantities $A(D_1, \dots,
D_m)$ and $B(D_1, \dots, D_m)$ via an argument based on the joint
range alone. On the other hand, Theorem~\ref{main} achieves a
reduction to $2m+2$ variables. 

\subsection{Tightness}\label{tightness}
The conclusion of Theorem~\ref{main} is tight in the sense
that, in general, one cannot reduce the optimization problems to pairs
of probability measures on spaces of cardinality strictly smaller than
$m+2$. We shall demonstrate this fact in this section by means of an
example. We also explain this fact numerically in
Example~\ref{numtight}.   

Consider the problem of maximizing an $f$-divergence subject to
a upper bound on the total variation distance. In other words, let
\begin{equation*}
  A(V) := \sup\{D_f(P||Q) : V(P, Q) \leq V \}
\end{equation*}
where $D_f$ is an arbitrary $f$-divergence. In this case,
Theorem~\ref{main} asserts that $A(V)$ equals $A_3(V)$ where, as before,  
\begin{equation*}
  A_k(V) := \sup \{ D_f(P||Q): P, Q \in \Ps_k, V(P, Q) \leq V \}. 
\end{equation*}
We shall show below that when $D_f$ is a finite divergence and when
$f$ is strictly convex on $(0, \infty)$, the quantity $A_3(V)$ is
strictly larger than $A_2(V)$ for all $V \in (0, 1)$.  

The quantity $A_3(V) = A(V)$ can be determined precisely. The easiest
way is to use Lemma~\ref{lv}. Because  
\begin{equation*}
  V(P, Q) = D_{u_1}(P||Q) = 1 - \psi_{P, Q}(1), 
\end{equation*}
the constraint $V(P, Q) \leq V$ is equivalent to $\psi_{P, Q}(1) \geq
1 - V$. Therefore, by Lemma~\ref{lv}, we get 
\begin{equation*}
  A(V) = \sup \{I_f(\psi) : \psi \in \C \text{ and } \psi(1) \geq 1-V
  \}.  
\end{equation*}
It is obvious that the supremum above is achieved for $\psi(s) = (1-V)
\min(1, s)$ which equals $\psi_{P', Q'}$ for $P' = (1-V, V, 0)$ and
$Q' = (1-V, 0, V)$. Thus
\begin{equation*}
  A(V) = D_f(P'||Q') = V \left(f(0) + f'(\infty) \right). 
\end{equation*}
In other words, by Remark~\ref{remfin}, the quantity $A(V)$ equals $V$
times the maximum possible value of the divergence $D_f$. 

Let us now consider the quantity $A_2(V)$. By compactness and the form
of the constraint, it follows that there exist two probability
measures $P^*$ and $Q^*$ in $\Ps_2$
with $V(P^*, Q^*) = V$ and $D_f(P^*||Q^*) = A_2(V)$. We can then,
without loss of generality, parametrize $P^*$ and $Q^*$ by $P^* =
(\rho, 1-\rho)$ and $Q^* = (\rho + V, 1 - \rho - V)$ for some $0 \leq
\rho \leq 1-V$. Consider now the probability measures 
\begin{equation*}
  \tilde{P} = \left(\frac{\rho}{2}, \frac{\rho}{2}, 1 - \rho \right)
   \text{  and  }  \tilde{Q} = \left(\frac{\rho}{2} +
    \frac{V}{4}, \frac{\rho}{2} + \frac{3V}{4}, 1 - \rho - V \right)
\end{equation*}
in $\Ps_3$. If $V \in (0, 1)$, by strict convexity of the function
$f$, it is easy to see that 
\begin{equation*}
  D_f(\tilde{P}||\tilde{Q}) > D_f(P^*||Q^*) = A_2(V). 
\end{equation*}
On the other hand, it is easy to see that $V(\tilde{P}, \tilde{Q})$
equals $V$ and hence $A_3(V) > D_f(\tilde{P}||\tilde{Q})$. Therefore,
$A_3(V) > A_2(V)$. Thus, Theorem~\ref{main} is tight in
general. However, in some special cases, one can obtain stronger
conclusions, see Sections~\ref{secprim} and~\ref{secchi}. 

\subsection{Finiteness assumption for $B(D_1, \dots, D_m)$}
In order to prove~\eqref{main.2}, we required that all the
divergences $D_{f_1}, \dots, D_{f_m}$ are finite. The reason is mainly
technical and the finiteness assumption was crucially used in the
proof of Lemma~\ref{redex}. The set $\cap_i \C_2(f_i, D_i)$ will not
be closed (in $C[0, \infty)$ equipped with the metric $\rho$) if some
of the divergences $D_{f_i}$ were non-finite (closedness of $\cap_i
\C_2(f_i, D_i)$ was critical in the application of Choquet's theorem in
Lemma~\ref{redex}). To illustrate this non-closedness, let us consider
$m = 1$ and the set $\C_2(f_1, D_1)$ for some non-finite divergence
$D_{f_1}$ and $D_1 > 0$. By~\eqref{finchar},  because $D_{f_1}$ is
non-finite, we have 
\begin{equation*}
  \int_0^{\infty} \min(1, s) d\nu_{f_1}(s) = \infty. 
\end{equation*}
The function $\psi_0(s) = \min(1, s)$ clearly does not belong to
$\C_2(f_1, D_1)$ because $I_{f_1}(\psi_0) = 0$. But we shall show that
$\psi_0$ belongs to the closure of $\C_2(f_1, D_1)$. Indeed, if
\begin{equation*}
  \psi_n(s) := \left(1 - \frac{1}{n} \right) \min(1, s) \qt{for $s
  \geq 0$}, 
\end{equation*}
then clearly $\psi_n$ converges to $\psi$ in the metric
$\rho$. Moreover, for each $n$, $\psi_n \in \C$ and 
\begin{equation*}
  I_{f_1}(\psi_n) = \frac{1}{n}   \int_0^{\infty} \min(1, s)
  d\nu_{f_1}(s) = \infty.  
\end{equation*}
Thus $\psi_n \in \C_2(f_1, D_1)$ for each $n \geq 1$ which implies
that $\psi_0$ belongs to the closure of $\C_2(f_1, D_1)$. Therefore,
$\C_2(f_1, D_1)$ is not closed. 

The quantity $B(D_1, \dots, D_m)$ behaves strangely when some of the
divergences $D_{f_i}$ are non-finite and when $D_f$ is finite. Indeed,
in this case, one can simply drop the constraints corresponding to the
non-finite divergences and reduce the problem to the case when all
divergences are finite. This is the content of the next lemma. 
\begin{lemma}\label{yu}
  Let $D_f, D_{f_1}, \dots, D_{f_m}$ be finite divergences and let
  $D_{f_{m+1}}, \dots, D_{f_{m+l}}$ be non-finite divergences. Then 
  \begin{equation*}
    B(D_1, \dots, D_{m+l}) = B(D_1, \dots, D_m)
  \end{equation*}
\end{lemma}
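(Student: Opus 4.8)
The plan is to prove the two inequalities $B(D_1, \dots, D_{m+l}) \geq B(D_1, \dots, D_m)$ and $B(D_1, \dots, D_{m+l}) \leq B(D_1, \dots, D_m)$ separately. The first is immediate: imposing the additional constraints $D_{f_j}(P||Q) \geq D_j$ for $j = m+1, \dots, m+l$ only shrinks the feasible set, so the infimum defining $B(D_1, \dots, D_{m+l})$ is taken over a subset of the feasible set for $B(D_1, \dots, D_m)$ and is therefore at least as large. All the work is in the reverse inequality.

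For the reverse inequality, the idea is that a non-finite divergence can be forced to be infinite by an arbitrarily small perturbation that barely disturbs the finite divergences. Given any pair $(P, Q)$ feasible for the first $m$ (finite) constraints, I would enlarge the sample space by two fresh points $a$ and $b$ and set, for small $\epsilon > 0$, $P_\epsilon = (1-\epsilon)P + \epsilon \delta_a$ and $Q_\epsilon = (1-\epsilon)Q + \epsilon \delta_b$. Splitting the defining integral over the original space and the two new atoms, a direct computation gives, for every finite divergence $D_{f_i}$ (including $D_f$ itself),
\[
D_{f_i}(P_\epsilon||Q_\epsilon) = (1-\epsilon)\, D_{f_i}(P||Q) + \epsilon\bigl(f_i(0) + f_i'(\infty)\bigr),
\]
since the atom $b$ (where $p_\epsilon = 0 < q_\epsilon$) contributes $\epsilon f_i(0)$ and the atom $a$ (where $q_\epsilon = 0 < p_\epsilon$) contributes $\epsilon f_i'(\infty)$. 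In particular $D_f(P_\epsilon||Q_\epsilon) \to D_f(P||Q)$ as $\epsilon \downarrow 0$.

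Next I would verify feasibility of $(P_\epsilon, Q_\epsilon)$ for all $m+l$ constraints. For the finite constraints $i \le m$, recall from~\eqref{fdi} that $f_i(0) + f_i'(\infty) = \sup_{P,Q} D_{f_i}(P||Q) \ge D_{f_i}(P||Q) \ge D_i$; hence the displayed identity exhibits $D_{f_i}(P_\epsilon||Q_\epsilon)$ as a convex combination of two numbers each $\ge D_i$, so the constraint survives the perturbation. For a non-finite divergence $D_{f_j}$, the characterization~\eqref{finchar} gives $f_j(0) + f_j'(\infty) = \infty$, so at least one of $f_j(0)$, $f_j'(\infty)$ is infinite: if $f_j'(\infty) = \infty$ the atom $a$ already forces $D_{f_j}(P_\epsilon||Q_\epsilon) = \infty$, while if $f_j(0) = \infty$ the atom $b$ does. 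Either way $D_{f_j}(P_\epsilon||Q_\epsilon) = \infty \ge D_j$. Thus $(P_\epsilon, Q_\epsilon)$ is feasible for $B(D_1, \dots, D_{m+l})$ for every $\epsilon > 0$, whence $B(D_1, \dots, D_{m+l}) \le D_f(P_\epsilon||Q_\epsilon)$; letting $\epsilon \downarrow 0$ gives $B(D_1, \dots, D_{m+l}) \le D_f(P||Q)$, and taking the infimum over all $(P,Q)$ feasible for the first $m$ constraints yields the claim.

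The only real subtlety, and the step I would treat most carefully, is the monotonicity observation $f_i(0) + f_i'(\infty) \ge D_{f_i}(P||Q)$ used to preserve the finite constraints; without it the perturbation could conceivably push a finite divergence below its threshold $D_i$. Everything else is bookkeeping on the definition of $D_f$, and the argument degrades gracefully in the edge case where the feasible set for the first $m$ constraints is empty, since both sides are then $+\infty$.
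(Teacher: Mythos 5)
Your proof is correct. It establishes the same two inequalities as the paper's proof, and the easy direction (shrinking the feasible set) is identical; but for the substantive direction you take a genuinely different, more elementary route. The paper works entirely inside the testing representation of Section~\ref{trep}: starting from $\psi \in \cap_{i=1}^m \C_2(f_i,D_i)$ it perturbs to $\psi_n = \min[(1-1/n)\min(1,s), \psi]$, checks $I_{f_i}(\psi_n)\ge I_{f_i}(\psi)\ge D_i$ for the finite constraints, gets $I_{f_j}(\psi_n)=\infty$ for the non-finite ones because $\min(1,s)-\psi_n(s)\ge \tfrac1n\min(1,s)$ and $\int\min(1,s)\,d\nu_{f_j}=\infty$, and then uses dominated convergence with the finiteness of $D_f$. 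You instead work directly with measures, adjoining two fresh atoms and forming $P_\epsilon=(1-\epsilon)P+\epsilon\delta_a$, $Q_\epsilon=(1-\epsilon)Q+\epsilon\delta_b$; your exact identity $D_{f_i}(P_\epsilon\|Q_\epsilon)=(1-\epsilon)D_{f_i}(P\|Q)+\epsilon\bigl(f_i(0)+f_i'(\infty)\bigr)$ is the measure-level counterpart of the paper's computation (indeed $\psi_{P_\epsilon,Q_\epsilon}=(1-\epsilon)\psi_{P,Q}$, a slightly different but equally valid perturbation). The key observation you correctly flag --- that $f_i(0)+f_i'(\infty)\ge D_{f_i}(P\|Q)\ge D_i$ via~\eqref{fdi}, so the convex combination preserves the finite constraints --- plays exactly the role of the paper's monotonicity $I_{f_i}(\psi_n)\ge I_{f_i}(\psi)$. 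What your version buys is self-containedness: it needs only the definition of an $f$-divergence and Remark~\ref{remfin}, not Lemma~\ref{lv} or the functional $I_f$; what the paper's version buys is uniformity with the rest of Section~\ref{pruf}, where the $\psi$-calculus is already in place. Your handling of the edge cases (empty feasible set, and the dichotomy $f_j(0)=\infty$ or $f_j'(\infty)=\infty$ for non-finite $D_{f_j}$) is also sound.
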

\begin{proof}
  We shall work with~\eqref{s1b}. Because $\cap_{i=1}^{m+l} \C_2(f_i,
  D_i)$ is contained in $\cap_{i=1}^{m} \C_2(f_i, D_i)$, it follows
  that $B(D_1, \dots, D_{m+l})$ is larger than or equal to $B(D_1,
  \dots, D_m)$. To prove the other inequality, let $\psi \in
  \cap_{i=1}^{m} \C_2(f_i, D_i)$. For each $n \geq 1$, define
  \begin{equation*}
    \psi_n(s) = \min \left[\left( 1- \frac{1}{n} \right) \min(1, s),
      \psi(s) \right]  
  \end{equation*}
  It is easy to check that $\psi_n \in \C$. Note that for $1 \leq i
  \leq m$, 
  \begin{align*}
    I_{f_i}(\psi_n) &= \int_0^{\infty} \left(\min(1, s) - \psi_n(s)
    \right) d\nu_{f_i}(s) \\
&\geq \int_0^{\infty} \left(\min(1, s) - \psi(s)
    \right) d\nu_{f_i}(s) = I_{f_i}(\psi) \geq D_i. 
  \end{align*}
  Moreover, for $m < i \leq m+l$, we have
  \begin{align*}
    I_{f_i}(\psi_n) &= \int_0^{\infty} \left(\min(1, s) - \psi_n(s)
    \right) d\nu_{f_i}(s) \\
&\geq \frac{1}{n}\int_0^{\infty} \min(1, s)  d\nu_{f_i}(s) =
\infty \geq D_i.  
  \end{align*}
It therefore follows that $\psi_n \in \cap_{i=1}^{m+l} \C_2(f_i, D_i)$
for every $n \geq 1$. Consequently, 
\begin{equation*}
  I_{f}(\psi_n) \geq B(D_1, \dots, D_{m+l}) \qt{for every $n \geq
    1$}. 
\end{equation*}
Observe that $\psi_n(s)$ converges to $\psi(s)$ for every $s \geq
0$. Thus, because $D_f$ is a finite divergence, it follows by the
dominated convergence theorem that $I_f(\psi_n)$ converges to
$I_f(\psi)$ which results in 
\begin{equation*}
  I_f(\psi) \geq B(D_1, \dots, D_{m+l}). 
\end{equation*}
Finally, because $\psi \in \cap_{i=1}^m \C_2(f_i, D_i)$ is arbitrary,
we have proved that $B(D_1, \dots, D_m)$ is larger than or equal to
$B(D_1, \dots, D_{m+l})$ which completes the proof of the lemma. 
\end{proof}
\begin{remark}
  If $D_f$ is finite and if all the divergences $D_{f_1}, \dots,
  D_{f_m}$ are non-finite, then Lemma~\ref{yu} gives that 
  \begin{equation}\label{yu.ken}
B(D_1, \dots,   D_m) = 0    
  \end{equation}
 for all values of $D_1, \dots, D_m$. Here is a special instance of
 this result. Suppose that $D_f$ denotes the total variation 
 distance, $m = 1$ and that $D_{f_1}$ is the Kullback-Leibler
 divergence. Then~\eqref{yu.ken} shows that the smallest value of the
 total variation distance over all probability measures with
 Kullback-Leibler divergence at least 5 (say) equals 0. The same
 conclusion holds for multiple non-finite divergence constraints as
 well.  
\end{remark}

Theorem~\ref{main} gives a formula for $B(D_1, \dots, D_m)$ for
arbitrary $D_f$ and for finite $D_{f_1}, \dots, D_{f_m}$. In
Lemma~\ref{yu}, we showed that when $D_f$ is finite, then the case
when one of more of $D_{f_1}, \dots, D_{f_m}$ are non-finite can be
reduced to the case where all the constraint divergences are finite
which is handled by Theorem~\ref{main}. The case that we are unable to
resolve is $B(D_1, \dots, D_m)$ when $D_f$ is
non-finite and when one or more of $D_{f_1}, \dots, D_{f_m}$ are
non-finite. This case is neither covered by Theorem~\ref{main} nor by
Lemma~\ref{yu}.  

\subsection{Sufficiency of the extreme point characterization}
In Theorem~\ref{charex}, we gave a necessary condition for functions
in the classes $\cap_i \C_1(f_i, D_i)$ and $\cap_i \C_2(f_i, D_i)$ to
be 
extreme. As we have seen, this necessary condition was enough to prove
Theorem~\ref{main}. For the sake of completeness, in this section, we
investigate whether the condition in Theorem~\ref{charex} is
sufficient as well for extremity.  

Let $j \in \{1, 2\}$ and let $\psi$ be a function in $\cap_i \C_j(f_i,
D_i)$. Suppose $\psi$ satisfies the condition given in
Theorem~\ref{charex} i.e., let $\psi = \psi_{P, Q}$ for two
probability measures $P, Q \in \Ps_{k+2}$ where $k$ is the number of
indices where $I_{f_i}(\psi) = D_i$. Here, we explore the question of
extremity of $\psi$ in $\cap_i \C_j(f_i, D_i)$. 

Let $l \leq k+2$ be the size of the (finite) support set of the
measure $P+Q$ and let $P = \{p_1, \dots, p_{l}\}$ and $Q = \{q_1,
\dots, q_{l}\}$, then $\psi(s) = \sum_{i=1}^{l} \min \left(p_i, q_i s
\right)$. Because the size of the support set of $P+Q$ is $l$, it
follows that $\max(p_i, q_i) > 0$ for every $i$. It is easy to check
that $\psi$ is piecewise linear with knots at $p_i/q_i$ (this ratio
can equal 0 or $\infty$ as well). 

Suppose that $\psi = (\psi_1 + \psi_2)/2$ for two functions $\psi_1$
and $\psi_2$ in $\cap_i \C_j(f_i, D_i)$. Because $\psi_1$ and $\psi_2$
are both concave, it follows that they both have to be linear in the
regions where $\psi$ is linear. As a result, one can write
\begin{equation*}
  \psi_1(s) = \sum_{i=1}^l (1+\alpha_i) \min(p_i, q_i s)
\end{equation*}
and 
\begin{equation*}
\psi_2(s) = \sum_{i=1}^l (1 - \alpha_i) \min(p_i, q_i s)
\end{equation*}
for some $\alpha_1, \dots, \alpha_n \in [-1, 1]$ satisfying
\begin{equation}\label{mee}
\sum_{i=1}^l \alpha_i p_i =  \sum_{i=1}^l \alpha_i q_i = 0. 
\end{equation}
Now, whenever $I_{f_i}(\psi) = D_i$, because of the above, we must
have $I_{f_i}(\psi_1) = D_i$. This latter equality can be written as a
linear equality in $\alpha_1, \dots, \alpha_l$. Because $I_{f_i}(\psi)
= D_i$ for $k$ indices $i$, we obtain $k$ linear equations for
$\alpha_1, \dots, \alpha_l$. These, together with~\eqref{mee}, give
rise to $k+2$ linear equations for the $l \leq k+2$ variables
$\alpha_1, \dots, \alpha_l$. Under appropriate linear independence
conditions on the measures $\nu_{f_i}$, these would imply that
$\alpha_i = 0$ for every $1 \leq i \leq l$ which would further imply
that $\psi_1 = \psi = \psi_2$ and that $\psi$ is extreme. 

In the case when $m \leq 1$ however, no such explict linear
independence conditions are necessary and, moreover, one can also give
a geometric proof of the sufficiency characterization of the extreme
points. We do this below in two parts: Lemma~\ref{mos} deals with $m
= 0$ (i.e., extreme points of $\C$) and Lemma~\ref{m1s} deals with
the $m = 1$ case. 
\begin{lemma}\label{mos}
 For every $P, Q \in \Ps_2$, the function $\psi_{P, Q}$ is extreme in
 $\C$. 
\end{lemma}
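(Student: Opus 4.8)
The plan is to show directly that $\psi_{P,Q}$ is extreme by taking an arbitrary representation $\psi_{P,Q} = (\psi_1 + \psi_2)/2$ with $\psi_1, \psi_2 \in \C$ and proving $\psi_1 = \psi_2 = \psi_{P,Q}$, exploiting the very simple piecewise-linear geometry of $\psi_{P,Q}$ and its contact with the upper envelope $\min(1,s)$. First I would record the shape of $\psi = \psi_{P,Q}$. Writing $P = (p_1,p_2)$ and $Q = (q_1,q_2)$, we have $\psi(s) = \min(p_1, q_1 s) + \min(p_2, q_2 s)$, a nonnegative, concave, nondecreasing, piecewise-linear function with $\psi(0) = 0$ and at most two knots, located at $p_1/q_1$ and $p_2/q_2$. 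The degenerate case $P = Q$ is disposed of immediately: there $\psi(s) = \min(1,s)$, which is the maximal element of $\C$, so if it is the average of two functions in $\C$ (each bounded above by $\min(1,s)$) then both must equal it.

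The engine for the remaining cases rests on two elementary observations. First, on any interval where $\psi$ is affine, two concave functions whose average is $\psi$ must themselves be affine there (a concave function equal to an affine function minus another concave function is both concave and convex, hence affine). Second, on the \emph{contact set} $\{s : \psi(s) = \min(1,s)\}$, the sandwich $\psi_j(s) \le \min(1,s)$ together with $(\psi_1+\psi_2)/2 = \psi$ forces $\psi_1 = \psi_2 = \psi$. The key structural point is that a genuine middle linear piece of $\psi$ (two distinct finite knots $t_1 < t_2$ in $(0,\infty)$) exists only when both $P$ and $Q$ have full support and $P \neq Q$; in that situation one checks $t_1 \le 1 \le t_2$, so $\psi(s) = s = \min(1,s)$ on $[0,t_1]$ and $\psi(s) = 1 = \min(1,s)$ on $[t_2,\infty)$. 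Thus both outer pieces lie in the contact set, which pins $\psi_1$ at both endpoints $t_1, t_2$ of the middle interval; since $\psi_1$ is affine there and continuous, it equals the affine interpolant, namely $\psi$, and we conclude $\psi_1 = \psi$ everywhere.

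The complementary case is when $\psi$ has at most one knot (some $p_i$ or $q_i$ vanishes), so there is no middle piece. Here $\psi_1$ is affine on each of the at most two linear pieces, with $\psi_1(0) = 0$ forced by $\psi(0)=0$ and nonnegativity. On the terminal piece, if $\psi(\infty) = 1$ we again use contact with $\min(1,s)$, while if $\psi(\infty) < 1$ (i.e.\ $Q$ is not of full support) I would instead invoke monotonicity: $\psi$ is constant there, $\psi_1$ is nondecreasing with $\psi_1$ matching $\psi$ at the knot, and $\psi_1 + \psi_2 = 2\psi$ with both summands at least the common knot value, which forces $\psi_1 = \psi$ on that ray. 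Propagating back through the single breakpoint via continuity then yields $\psi_1 = \psi_2 = \psi$. The main obstacle is purely organizational: carefully partitioning the degenerate support configurations and confirming in each that the outer pieces are fully pinned (so no free slope remains). As a cross-check one can run the equivalent algebraic argument, writing $\psi_j = \sum_i (1 \pm \alpha_i)\min(p_i, q_i s)$ and reading off from the behavior as $s \to 0^+$ and $s \to \infty$ the two linear constraints $\sum_i \alpha_i p_i = \sum_i \alpha_i q_i = 0$, whose coefficient matrix is nonsingular precisely when $P \neq Q$, again giving $\alpha_1 = \alpha_2 = 0$.
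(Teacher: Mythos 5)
Your proof is correct and follows essentially the same route as the paper's: identify the interval where $\psi_{P,Q}$ falls strictly below $\min(1,s)$, note that $\psi_1=\psi_2=\psi_{P,Q}$ on the contact set by the sandwich $\psi_j\le\min(1,s)$, and then use concavity (plus monotonicity in the unbounded-interval case) against the linearity of $\psi_{P,Q}$ on the gap to force equality there too. Your treatment is somewhat more case-heavy than the paper's, which packages all the degenerate support configurations into a single interval $J$, but the underlying mechanism is identical.
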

\begin{proof}
  Fix two probability measures $P$ and $Q$ on $\{1, 2\}$ and let $J$
  denote the smallest open interval (possibly infinite) such
  that $\psi_{P, Q}(s) = \min(1, s)$ for $s \notin J$. By explicitly
  writing down the expression for $\psi$ in terms of $P\{1\}$ and
  $Q\{1\}$, it is easy to see that if $J$ is non-empty, then
  $\psi_{P, Q}$ is linear on $J$.  

  Suppose now that $\psi_{P, Q}$ equals the convex combination
  $(\psi_1 + \psi_2)/2$ for two functions $\psi_1$ and $\psi_2$ in
  $\C$. If $J$ is empty, then $\psi_{P, Q}$ equals the function
  $\min(1, s)$ for all $s$ and since all functions in $\C$ and bounded  
  from above by $\min(1, s)$, it follows that
  \begin{equation}\label{kd}
\psi_{P, Q}(s) = \psi_1(s) =   \psi_2(s) = \min(1, s)     
  \end{equation}
  for all $s \geq 0$. 

  Let us therefore assume that $J$ is non-empty. In this case,
  again it is obvious that~\eqref{kd} holds for $s \notin
  J$. Concavity of functions in $\C$ and linearity of $\psi$ in
  $J$ would then imply that $\psi_1 \geq \psi_{P, Q}$ and $\psi_2 \geq
  \psi_{P, Q}$. Since $\psi_{P, Q}$ is the average of 
  $\psi_1$ and $\psi_2$, this can happen only when $\psi_{P, Q} =
  \psi_1 = \psi_2$. The proof is complete. 
\end{proof}
\begin{lemma}\label{m1s}
  Let $j \in \{1, 2\}$ and consider the class $\C_j(f_1, D_1)$ for
  $D_1 > 0$. For
  every $P, Q \in \Ps_3$ with $D_{f_1}(P|| Q) = D_1$, the function
  $\psi_{P, Q}$ is  extreme in $\C_j(f_1, D_1)$. 
\end{lemma}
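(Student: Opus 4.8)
The plan is to show that the only way to write $\psi=\psi_{P,Q}$ as a midpoint $(\psi_1+\psi_2)/2$ with $\psi_1,\psi_2\in\C_j(f_1,D_1)$ is the trivial one, and the whole argument hinges on the constraint $I_{f_1}=D_1$ eliminating a single residual degree of freedom precisely because $D_1>0$. Writing $p_i,q_i$ for the masses of $P,Q$ on $\{1,\dots,3\}$ and $t_i=p_i/q_i$ for the ratios, the function $\psi(s)=\sum_i\min(p_i,q_is)$ is concave and piecewise linear with knots among the $t_i$. Exactly as in the preceding discussion of sufficiency, concavity of $\psi_1,\psi_2$ and the fact that they average to the piecewise-linear $\psi$ force $\delta:=\psi_1-\psi=\sum_i\alpha_i\min(p_i,q_is)$ with $\sum_i\alpha_ip_i=\sum_i\alpha_iq_i=0$, and $\psi_2=\psi-\delta$. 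Since $D_1=D_{f_1}(P\|Q)>0$ we have $P\neq Q$, so the vectors $(p_i)$ and $(q_i)$ are linearly independent and the space of admissible $\delta$ has dimension at most one. If it is zero then $\psi_1=\psi_2=\psi$ and we are done, so assume it is spanned by a single $\eta\neq0$ and write $\delta=c\,\eta$.

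First I would record the shape of $\eta$. Setting $g:=\min(1,\cdot)-\psi\ge0$, every function in $\C$ lies below $\min(1,\cdot)$, so $\psi\pm\delta\le\min(1,\cdot)$ gives $|\delta|\le g$; hence $\eta$ vanishes off the active interval $\{g>0\}=:(a,b)$ and at any finite touch-endpoint. Moreover $\eta(0)=\sum_i\alpha_i\min(p_i,0)=0$, and $\eta$ is affine on each linear piece of $\psi$. The key elementary observation is that, because $P,Q$ live on only three points, $\psi$ has at most one knot strictly inside $(a,b)$ (the extreme ratios furnish the endpoints of the touching regions and hence of $(a,b)$, leaving at most one in between). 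Consequently $\eta$ has at most two non-flat pieces and, being anchored at $\eta(0)=0$, is \emph{sign-definite}; normalising, I may take $\eta\ge0$ with $\eta>0$ throughout $(a,b)$, i.e. $\eta$ is a ``tent''/``ramp'' supported on $\overline{(a,b)}$.

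Next I would use that $I_{f_1}$ is affine in its argument. From $I_{f_1}(\psi')=\int_0^\infty(\min(1,s)-\psi'(s))\,d\nu_{f_1}(s)$ and $I_{f_1}(\psi)=D_1$, putting $\kappa:=\int_0^\infty\eta\,d\nu_{f_1}$ yields $I_{f_1}(\psi_1)=D_1-c\kappa$ and $I_{f_1}(\psi_2)=D_1+c\kappa$. Membership $\psi_1,\psi_2\in\C_j(f_1,D_1)$ imposes the \emph{same-direction} pair of inequalities — both $\le D_1$ when $j=1$, both $\ge D_1$ when $j=2$ — and in either case this forces $c\kappa=0$. It therefore remains only to prove $\kappa>0$, and this is the single place where $D_1>0$ is used: since $g$ vanishes outside $\overline{(a,b)}$ (and at the finite endpoints), we have $D_1=I_{f_1}(\psi)=\int_{(a,b)}g\,d\nu_{f_1}$, so $D_1>0$ forces $\nu_{f_1}\big((a,b)\big)>0$; as $\eta$ is continuous and strictly positive on $(a,b)$, it follows that $\kappa>0$. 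Hence $c=0$, i.e. $\psi_1=\psi_2=\psi$, and $\psi$ is extreme.

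The main obstacle — and essentially the only nonroutine point — is the structural claim that $\{g>0\}$ is an interval containing at most one interior knot, so that the residual perturbation $\eta$ is one-dimensional and sign-definite; I would verify this by going through the finitely many configurations of the ratios $t_1\le t_2\le t_3$, including the boundary cases where some ratio is $0$ or $\infty$ (a point with $q_i=0$ or $p_i=0$ contributes nothing to $\psi$, which merely slides an endpoint of $(a,b)$ to $\infty$ or $0$ and turns the ``tent'' into a ``ramp''), and the degenerate case of coincident ratios (where $(a,b)$ has no interior knot and $\eta\equiv0$, recovering extremity directly as in Lemma~\ref{mos}). Everything after the identification $\delta=c\,\eta$ is then the short linear-functional computation above, with $D_1>0$ entering solely through $\kappa>0$.
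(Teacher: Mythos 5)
Your proof is correct, and it reaches the same crux as the paper's proof, but by the algebraic route that the paper sets up in the surrounding discussion and then deliberately sidesteps for $m\le 1$. The paper's own argument is geometric: it lets $J$ be the interval on which $\psi<\min(1,\cdot)$, notes that $\psi$ has at most two linear segments in $J$ because $P,Q\in\Ps_3$, deduces $I_{f_1}(\psi_1)=I_{f_1}(\psi_2)=D_1$ from the one-sided constraint exactly as you do, and then argues pointwise at the single interior knot $a$: if $\psi_1(a)>\psi(a)$, concavity makes $\psi_1-\psi$ strictly positive on $J$, so $\int_J(\psi_1-\psi)\,d\nu_{f_1}=0$ forces $\nu_{f_1}(J)=0$ and hence $D_1=0$, a contradiction. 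You instead complete the linear-algebra program of the preceding subsection: the admissible perturbations $\delta=\psi_1-\psi$ form a space of dimension at most one spanned by a sign-definite tent $\eta$, and $\kappa=\int\eta\,d\nu_{f_1}>0$ kills the remaining degree of freedom. The contradiction mechanism is identical in both proofs --- $D_1>0$ forces $\nu_{f_1}$ to charge the active interval, on which the residual perturbation is strictly signed --- so the two are morally the same; what the paper's geometric version buys is that it never needs your structural claim that the perturbation is one-dimensional and sign-definite (your self-identified ``main obstacle''), since concavity at the knot $a$ delivers the sign directly without enumerating configurations of $t_1\le t_2\le t_3$. Your deferred case analysis does go through (a coordinate with $p_i=0$ or $q_i=0$ contributes the zero function to $\psi$, and $\sum_i\alpha_iq_i=0$ still forces $\eta$ to vanish on $[0,t_1]$ even when some $q_i=0$, so $\eta$ is always a ramp or tent), and the degenerate cases where $\{g>0\}$ is strictly smaller than $(t_1,t_3)$ give $\eta\equiv 0$ outright; so there is no gap, only a verification you have correctly flagged as routine. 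One small point worth making explicit: when $f_1'(\infty)=\infty$ and the active interval is unbounded, $\kappa$ may equal $+\infty$, and the step ``$c\kappa=0$'' should then be phrased as ``$c=0$, since otherwise one of $I_{f_1}(\psi_1),I_{f_1}(\psi_2)$ would be infinite (or $-\infty$), violating the constraint''; this is the same $\infty$-bookkeeping the paper's version also leaves implicit.
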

\begin{proof}
  Fix two probability measures $P$ and $Q$ in $\Ps_3$ with
  $D_{f_1}(P||Q) = D_1$ so that $I_{f_1}(\psi_{P, Q}) = D_1$. For
  notational convenience, let us denote $\psi_{P, Q}$ by $\psi$. As in 
  the proof of Lemma~\ref{mos}, let $J$ denote the smallest interval
  outside which $\psi(s)$ equals $\min(1, s)$. If $J$ is empty,
  then $\psi$ equals the function $\min(1, s)$ which is
  obviously extreme. So let us assume that $J$ is non-empty. In that
  case, because $P, Q \in \Ps_3$, it can be checked that $\psi$
  is piecewise linear with at most two segments in $J$. 

  Suppose that $\psi = (\psi_1 + \psi_2)/2$ for two functions
  $\psi_1, \psi_2 \in \C_j(f_1, D_1)$. Because, $I_{f_1}(\psi)
  = D_{f_1}(P||Q) = D_1$, it follows that
  \begin{equation}\label{mrk}
   I_{f_1}(\psi_1) = I_{f_1}(\psi_2) = I_{f_1}(\psi) =
   D_{f_1}(P||Q) = D_1.  
  \end{equation}
  If $\psi$ has exactly one segment in $J$, then, by concavity,
  the inequalities $\psi_1(s) \geq \psi(s)$ and $\psi_2(s) \geq
  \psi(s)$ hold for all $s$. Because $\psi_1$ and $\psi_2$
  average out to $\psi$, we must then have $\psi = \psi_1 = \psi_2$.   

  Now suppose that $\psi$ has exactly two segments in
  $J_{\psi}$. Let $a$ be the point in $J$ such that $\psi$ is linear
  on both $J \cap [0, a]$ and $J \cap [a, \infty)$. We 
  shall show that $\psi(a) = \psi_1(a) = \psi_2(a)$. Concavity of
  $\psi_1$ and $\psi_2$ and linearity of $\psi$ on $J \cap [0, a]$ and
  $J \cap [a, \infty)$ can then be used to show that $\psi = \psi_1 =
  \psi_2$. Suppose, if possible, that $\psi_1(a) > \psi(a)$. Using the
  concavity of $\psi_1$, it then follows that $\psi_1(s) > \psi(s)$
  for all $s \in J$. Because of~\eqref{mrk}, it follows that 
  \begin{equation*}
    \int_J \left(\psi_1(s) - \psi(s) \right) d\nu_{f_1}(s) =
    \int_0^{\infty} \left(\psi_1(s) - \psi(s) \right)d\nu_{f_1}(s) = 0 
  \end{equation*}
  This implies that $\nu_{f_1}(J) = 0$. But then 
  \begin{equation*}
    D_1 = I_{f_1}(\psi) = \int_J \left(\min(1, s) - \psi(s) \right)
    d\nu_{f_1}(s) = 0
  \end{equation*}
  which contradicts the fact that $D_1 > 0$. We have thus obtained
   $\psi_1(a) \leq \psi(a)$. Similarly, \makebox{$\psi_2(a) \leq \psi(a)$}
  and since $\psi(a)$ is an average of $\psi_1(a)$ and $\psi_2(a)$, it
  follows that $\psi(a) = \psi_1(a) = \psi_2(a)$. The proof is
  complete. 
\end{proof}

\section{Applications and Special Cases}\label{apsp}
\subsection{Primitive Divergences}\label{secprim}
In this section, we consider the case of the quantity $B(D_1,
\dots, D_m)$ where all the divergences $D_{f_1}, \dots, D_{f_m}$ are
primitive divergences (see Remark~\ref{prim}). In Theorem~\ref{myprim}
below, we show that, in this case, $B(D_1, \dots, D_m)$ actually
equals $B_{m+1}(D_1, \dots, D_m)$ as opposed to $B_{m+2}(D_1, \dots,
D_m)$. 

The problem of minimizing an $f$-divergence subject to constraints on
primitive divergences and the related problem of obtaining
inequalities between $f$-divergences and primitive divergences has
received much attention in the literature and has a long history. Let
us briefly mention some important works in this area. The most
well-known such inequality is Pinsker's  inequality which states that
$D_{KL}(P||Q) \geq 2 V^2(P, Q)$ where $D_{KL}$ is the Kullback-Leibler
divergence which corresponds to $f(x) = x \log x$ and $V$ is the total
variation distance. Pinsker~\cite{PinskerIneq} proved this inequality
with the constant 2 replaced by 1. The inequality with the constant 2
(which cannot be improved further) has been proved independently
almost at the same time by Csiszar~\cite{Csiszar66},
Kemperman~\cite{kemperman69} and Kullback~\cite{Kullback67IEEE}.  

Although Pinsker's inequality is very useful, it is not sharp
in the sense that 
\begin{equation*}
  \inf \left\{ D_{KL}(P||Q) : V(P, Q) \geq V \right\} > 2 V^2
\end{equation*}
for every $V \neq 0$. The problem of finding sharp inequalities
between $D_{KL}(P||Q)$ and $V(P, Q)$ was solved
in~\cite{RefinePinsker} where an implicit expression for the
infimum in the left hand side above was provided. 

The more general problem of finding the best lower bound for an
arbitrary $f$-divergence given a lower bound on total variation
distance was solved by Gilardoni in~\cite{Gilardoni06}. The problem of
finding lower bounds for $f$-divergences given constraints on a finite
number of primitive divergences was studied by~\cite{GenPin}. In
Remark~\ref{rwbash}, we explain how our theorem below gives an
equivalent but simpler solution compared to the solution
of~\cite{GenPin}. 
\begin{theorem}\label{myprim}
  Suppose that $D_f$ is an arbitrary divergence and that all
  divergences $D_{f_1}, \dots, D_{f_m}$ are primitive
  divergences. Then 
  \begin{equation*}
    B(D_1, \dots, D_m) = B_{m+1}(D_1, \dots, D_m). 
  \end{equation*}
\end{theorem}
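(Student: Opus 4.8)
The plan is to exploit the fact that a primitive divergence imposes a \emph{pointwise} constraint on $\psi$. Since $D_{u_{s_i}}(P\|Q)=\min(1,s_i)-\psi_{P,Q}(s_i)$ (Remark~\ref{prim}), the associated measure $\nu_{u_{s_i}}$ is the unit mass at $s_i$ and $I_{u_{s_i}}(\psi)=\min(1,s_i)-\psi(s_i)$. Hence the feasible set for $B$ is
\[
  \bigcap_{i=1}^m \C_2(u_{s_i},D_i)=\{\psi\in\C:\psi(s_i)\le c_i,\ i=1,\dots,m\},\qquad c_i:=\min(1,s_i)-D_i,
\]
that is, $\C$ cut out by $m$ affine half-space conditions sitting at the points $s_1,\dots,s_m$. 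Primitive divergences are finite (Remark~\ref{remfin}), so Lemma~\ref{redex} applies and reduces $B(D_1,\dots,D_m)$ to an infimum of $I_f$ over extreme functions; equivalently I would use the stratified identity~\eqref{sva}, fixing the set $J$ of active constraints with $|J|=k\le m$.

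The crux is that for the \emph{minimization} problem $I_f$ is monotone decreasing in $\psi$, so it pays to push $\psi$ upward toward the cap $\min(1,s)$. I would therefore prove that every feasible $\psi$ is dominated pointwise by a feasible $\psi'=\psi_{P',Q'}$ with $P',Q'$ supported on at most $k+1\le m+1$ points; since $\psi'\ge\psi$ forces $I_f(\psi')\le I_f(\psi)$, this gives $B_{m+1}\le B$, and $B\le B_{m+1}$ is trivial. To build $\psi'$, take the supporting lines $L_i$ of the concave function $\psi$ at the active points $t_i$ ($i\in J$) and set $\psi'=\min\!\big(\min(1,s),\ \min_{i\in J}L_i\big)$. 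As $\psi$ lies below both the cap and each $L_i$ one gets $\psi'\ge\psi$, $\psi'\in\C$, and $\psi'(t_i)\le L_i(t_i)=c_i$. The decisive structural point is the shape of this lower envelope: near $s=0$ the cap is the smallest piece (each $L_i(0)\ge\psi(0)=0$ by concavity), so $\psi'$ coincides with $\min(1,s)$ there and has \emph{unit} initial slope, whence $Q'\{p=0\}=0$ and there is no atom at ratio $0$; and the kink of $\min(1,s)$ at $s=1$ lies inside the region where $\psi'$ follows the cap, where locally $P'=Q'$, so it costs \emph{no} support point. Counting distinct slopes of $\psi'$ (one initial cap slope, at most $k$ line slopes, one terminal slope) and translating into support size via Lemma~\ref{der} and the argument of Lemma~\ref{ss} yields at most $k+1$ atoms for $P'+Q'$, i.e. $\psi'\in\Ps_{k+1}$.

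The hard part will be making this domination rigorous on two points. First, the envelope must respect the \emph{inactive} constraints $\psi'(s_j)\le c_j$ for $j\notin J$, which raising $\psi$ can violate; I would handle this by a continuation argument, sliding from $\psi$ toward $\psi'$ until a new constraint binds, enlarging $J$ and repeating — this terminates in at most $m$ steps, never increases $I_f$, and leaves a feasible envelope of the claimed form (the stratified identity~\eqref{sva} organizes this, since within the stratum $J$ the maximal feasible $\psi$ is exactly of envelope type). Second, and more delicate, is the case where $\psi$ \emph{already} has slope strictly below $1$ at the origin: its supporting lines then inherit this defect and the naive envelope keeps the extra ratio-$0$ atom, so one must first replace $\psi$ by a feasible dominating function agreeing with $\min(1,s)$ near $0$, which requires care to preserve concavity and is precisely where the proof uses that we minimize rather than maximize. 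This asymmetry is essential and provides a sanity check: in the maximization problem the optimizer is pushed \emph{away} from the cap, the kink at $s=1$ is no longer free, and $\Ps_{m+2}$ is genuinely necessary (Section~\ref{tightness}), so Theorem~\ref{myprim} has no counterpart for $A$.
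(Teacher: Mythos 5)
Your route is genuinely different from the paper's. The paper first invokes Theorem~\ref{main} to reduce to $P,Q\in\Ps_{m+2}$, orders the likelihood ratios $r_1\le\dots\le r_{m+2}$, and notes that the $m$ primitive functions $u_{s_1},\dots,u_{s_m}$ have only $m$ kinks among them while there are $m+1$ consecutive intervals $[r_j,r_{j+1}]$; by pigeonhole some interval contains no kink in its interior, so every $f_i$ is linear there. Merging the two corresponding atoms preserves each constraint $D_{f_i}(P^*||Q^*)=D_{f_i}(P||Q)\ge D_i$ exactly (linearity) while decreasing $D_f$ (data processing), giving $B_{m+2}\ge B_{m+1}$. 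Your domination argument in $\psi$-space via~\eqref{s1b} bypasses Theorem~\ref{main} entirely, which is a conceptual plus, and your continuation scheme for the inactive constraints is sound (the active set strictly grows, so it terminates in at most $m$ steps and $I_f$ never increases along the way). But it is considerably longer than the paper's three-line merge.

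The one concrete flaw is in your atom count. It is false that the cap is always the smallest piece of the envelope near $s=0$: if $\psi$ has initial slope $\sigma<1$ and an active point $s_i$ lies where $\psi$ is still linear through the origin, the unique supporting line is $L_i(s)=\sigma s$, which undercuts the cap near $0$, and $\psi'$ inherits a ratio-zero atom. Your proposed repair --- first dominating $\psi$ by a feasible function with unit slope at the origin --- is impossible in general: for $\psi(s)=\tfrac{1}{2}\min(1,s)$ with the constraint $\psi(1/2)\le 1/4$ active, any concave nondecreasing $\tilde\psi\ge\psi$ with $\tilde\psi(1/2)\le 1/4$ and unit initial slope would have slope at most $\tfrac12$ at $s=\tfrac12$ and hence fall strictly below $\psi$ just to the right of $\tfrac12$. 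Fortunately no repair is needed: when the slope-one piece $s\mapsto s$ of the cap is undercut at the origin it simply drops out of the lower envelope, so the envelope has one fewer linear piece, and one loses a kink exactly when one gains the ratio-zero atom (similarly for the constant piece of the cap versus a ratio-infinity atom). In all cases the atom count of $P'+Q'$ --- kinks in $(0,\infty)$, plus one if the initial slope is below one, plus one if the terminal value is below one --- is at most $k+1$. With that corrected count, and Lemmas~\ref{der} and~\ref{ss} to translate the envelope back into $P',Q'\in\Ps_{m+1}$, your proof goes through.
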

\begin{proof}
  Theorem~\ref{main} states that $B(D_1, \dots, D_m)$ equals
  $B_{m+2}(D_1, \dots, D_m)$. We shall show therefore that
  $B_{m+2}(D_1, \dots, D_m)$ equals $B_{m+1}(D_1, \dots, D_m)$. 

  It is obvious that 
  \begin{equation*}
B_{m+2}(D_1, \dots, D_m) \leq B_{m+1}(D_1,
  \dots, D_m)     
  \end{equation*}
because we have a minimization problem and the constraint set is
larger in the case of $B_{m+2}(D_1, \dots, D_m)$. It is therefore
enough to prove that 
\begin{equation*}
B_{m+2}(D_1, \dots, D_m) \geq B_{m+1}(D_1, \dots, D_m).  
\end{equation*}
Fix two probability measures $P = (p_1, \dots, p_{m+2})$ and $Q =
(q_1, \dots, q_{m+2})$ in $\Ps_{m+2}$ with $D_{f_i}(P||Q) \geq D_i$
for every $i = 1, \dots, m$. We show below that
\begin{equation*}
D_f(P||Q) \geq B_{m+1}(D_1, \dots, D_m)  
\end{equation*}
which will complete the proof.   

Without loss of generality, we assume that $p_i + q_i > 0$ for each
$i$ and that the likelihood ratios $r_i := p_i/q_i \in [0, \infty]$
satisfy $r_1 \leq \dots \leq r_{m+2}$. Because each divergence
$D_{f_i}$ is assumed to be primitive, the convex function $f_i$ is
piecewise linear with exactly two linear parts. As a result, there
exists some index $j \in \{1, \dots, m+1\}$ such that all the
functions $f_1, \dots, f_m$ are linear in the interval $[r_j,
r_{j+1}]$. 

Now consider the two probability measures $P^*$ and $Q^*$ in
$\Ps_{m+1}$ defined by 
\begin{equation*}
  P^* := (p_1, \dots, p_{j-1}, p_j+p_{j+1}, p_{j+2}, \dots, p_{m+2})
\end{equation*}
and 
\begin{equation*}
  Q^* := (q_1, \dots, q_{j-1}, q_j+q_{j+1}, q_{j+2}, \dots, q_{m+2})
\end{equation*}
Because of the linearity of $f_1, \dots, f_m$ on $[r_j, r_{j+1}]$, it
is easy to check that 
\begin{equation*}
  D_{f_i}(P^*||Q^*) = D_{f_i}(P||Q) \geq D_i \qt{for all $1 \leq i
    \leq m$}. 
\end{equation*}
As a result, we have 
\begin{equation*}
 D_f(P^*||Q^*) \geq B_{m+1}(D_1, \dots, D_m). 
\end{equation*}
On the other hand, by convexity or as a consequence of the data
processing inequality for $f$-divergences (see, for
example,~\cite[Lemma 4.1]{CsiszarShields}), it follows that 
\begin{equation*}
  D_f(P||Q) \geq D_{f}(P^*||Q^*) \geq B_{m+1}(D_1, \dots, D_m). 
\end{equation*}
The proof is complete. 
\end{proof}

\begin{remark}\label{rwbash}
Let $0 < s_1 < \dots < s_m < \infty$ and let $D_{f_i}$ be the
primitive divergence corresponding to $f_i = u_{s_i}$ (the
functions $u_{s_i}$ are defined in Remark~\ref{prim}). Then the
optimization problem corresponding to $B_{m+1}(D_1, \dots, D_m)$ can
be written as:  
\begin{equation}\label{rwb}
\begin{aligned}
& \underset{p,q \in [0,1]^{m+1}}{\textnormal{minimize}}
& & \sum_{j: q_j > 0} q_j f\left(\frac {p_j} {q_j} \right) +f'(\infty)
\sum_{j:q_j = 0} p_j \\ 
& \textnormal{subject to}
& & p_j\ge0, \; q_j\ge 0 \textnormal{ for all } j = 1, \dots, m+1 \\
& & & \sum p_j  = \sum q_j = 1\\
& & & \sum_{j} \min \left(p_j, q_j s_i \right)  \leq \min(1, s_i) - D_i
\end{aligned}
\end{equation}
for $i = 1, \dots, m$. According to Theorem~\ref{myprim}, the optimal
value of this problem equals $B(D_1, \dots, D_m)$. As we mentioned
before, the problem of determining $B(D_1, \dots, D_m)$ when the
divergences $D_{f_i}$ are all primitive divergences has been studied
by~\cite{GenPin}. Their main result~\cite[Theorem 6]{GenPin} gives a
characterization of $B(D_1, \dots, D_m)$ that is much more complicated
than~\eqref{rwb}. However, the two forms are essentially
equivalent. To understand the equivalence, observe that, by
Lemma~\ref{lv}, $D_f(P||Q)$ can be written as an integral functional
of $\psi_{P, Q}$. It is possible to precisely characterize the form of
the function $\psi_{P, Q}$ when $P, Q \in \Ps_{m+1}$. As a result, 
the optimization problem~\eqref{rwb} can be reformulated in
terms of such concave functions $\psi$. This, after some tedious
algebra, leads to the formula for $B(D_1, \dots, D_m)$ given
in~\cite[Theorem 6]{GenPin}. Our formula~\eqref{rwb} is much simpler
and, moreover, is conceptually easier to understand. 
\end{remark}

The special case of $m = 1$ in Theorem~\ref{myprim} asserts that in
order to determine $B(D)$ when $D_{f_1}$ is a primitive 
divergence, one only needs to consider probabilities on $\{1,
2\}$. This fact is well-known at least in the case when $D_{f_1}$ is
the total variation distance (see, for example,~\cite[Proposition
2.1]{Gilardoni06}). It is then possible to give a more direct
expression for $B(D)$ which is the content of the following lemma, whose
special case for $s = 1$ appears in~\cite[Proposition
2.1]{Gilardoni06}.   

\begin{lemma}
  Let $m = 1$ and consider the quantity $B(D)$ where $D_f$
  is an arbitrary $f$-divergence and $D_{f_1}$ is the primitive
  divergence corresponding to $f_1 = u_s$ for a fixed $s > 0$. Then,
  for every $0 \leq D \leq \min(1, s)$, the quantity $B(D)$ equals
  \begin{equation}\label{tus}
\inf_{0 \leq q \leq H/s} \left[ (1-q) f \left(\frac{H - qs}{1-q} \right) + q f
      \left(\frac{1 + qs - H}{q} \right) \right]
  \end{equation}
where $H := \min(1, s) - D$. 
\end{lemma}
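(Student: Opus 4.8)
The plan is to combine the dimension reduction of Theorem~\ref{myprim} with an explicit analysis of pairs of measures on $\{1,2\}$. Since $m=1$ and $D_{f_1}$ is primitive, Theorem~\ref{myprim} gives $B(D) = B_2(D)$, so it suffices to minimize $D_f(P\|Q)$ over $P,Q \in \Ps_2$. By Remark~\ref{prim}, the constraint $D_{u_s}(P\|Q) \ge D$ reads $\min(1,s) - \psi_{P,Q}(s) \ge D$, i.e.\ $\psi_{P,Q}(s) \le H$ with $H = \min(1,s) - D$. Writing $Q = (q_1,q_2)$, $P=(p_1,p_2)$ and recalling $\psi_{P,Q}(s) = \min(p_1,q_1 s) + \min(p_2,q_2 s)$, the problem becomes: minimize $q_1 f(p_1/q_1) + q_2 f(p_2/q_2)$ (with the standard $f'(\infty)$ convention when a coordinate of $Q$ vanishes) subject to $\psi_{P,Q}(s) \le H$.

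For the upper bound I would exhibit a one-parameter feasible family. For $q \in [0,H/s]$, set $Q = (1-q,\,q)$ and $P = (H-qs,\,1+qs-H)$; these are valid probability vectors because $H \le \min(1,s) \le s$ and $H \le 1$. A direct computation using $H \le s$ and $H \le 1$ shows $\min(H-qs,(1-q)s) = H-qs$ and $\min(1+qs-H,qs) = qs$, so $\psi_{P,Q}(s) = H$ and the constraint holds with equality. Evaluating $D_f$ on this pair gives exactly the bracketed expression in~\eqref{tus}, whence $B_2(D)$ is at most the infimum over $q \in [0,H/s]$. At the endpoints the bracket is read through the limiting convention $q\,f(a/q) \to a\,f'(\infty)$ as $q \downarrow 0$, which matches the definition of $D_f$ when a coordinate of $Q$ is zero.

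For the lower bound I would take an arbitrary feasible pair $P,Q \in \Ps_2$, order the likelihood ratios $r_1 = p_1/q_1 \le r_2 = p_2/q_2$, and first argue that feasibility forces $r_1 \le s \le r_2$: if both $r_i \ge s$ then $\psi_{P,Q}(s)$ attains its maximal value $\min(1,s) > H$ (for $D>0$), while if both $r_i \le s$ then $\psi_{P,Q}(s) = 1 > H$; both are infeasible, leaving the regime $\psi_{P,Q}(s) = p_1 + q_2 s$, so feasibility reads $p_1 \le H - q_2 s$. Holding $Q$ fixed and viewing $D_f$ as a function of $p_1$ (with $p_2 = 1-p_1$), convexity of $f$ makes this map convex, and its right derivative in $p_1$ equals $\partial^r f(r_1) - \partial^l f(r_2) \le 0$ throughout (the ordering $r_1 \le r_2$ persists because $r_1$ increases and $r_2$ decreases in $p_1$). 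Hence $D_f$ is non-increasing in $p_1$ and is minimized at the boundary $p_1 = H - q_2 s$; there one checks $r_1 \le s \le r_2$ still holds, and $D_f$ equals the bracket in~\eqref{tus} with $q = q_2 \in [0,H/s]$. Thus $D_f(P\|Q) \ge \inf_{0 \le q \le H/s}[\,\cdots\,]$, giving the matching lower bound on $B_2(D)$.

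The main obstacle is the careful treatment of the degenerate and boundary situations rather than the core monotonicity argument. These are the case $D=0$ (where $H = \min(1,s)$, the only feasible pair has $r_1 = r_2 = 1$, i.e.\ $P=Q$, and the value $0$ is attained only in the limit), and the vanishing-coordinate cases $q_2 = 0$ or $q_1 = 0$, where the $f'(\infty)$ terms enter and must be matched against the limiting values of the bracket. Once one verifies that the case split on the position of $r_1,r_2$ relative to $s$ is exhaustive and that the endpoint conventions are consistent on both sides, the upper and lower bounds coincide and yield~\eqref{tus}.
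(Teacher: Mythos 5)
Your proposal is correct and follows essentially the same route as the paper: reduce to $\Ps_2$ via Theorem~\ref{myprim}, observe that feasibility forces $s$ to lie between the two likelihood ratios so that the constraint reads $p_1 + q_2 s \leq H$, argue (the paper by a brief assertion, you by an explicit monotonicity-in-$p_1$ computation) that the constraint may be taken with equality, and then parametrize by $q_2$ to obtain the one-dimensional infimum. The only blemishes are cosmetic: your description of the $D=0$ case is off (every pair is feasible there, not only $P=Q$, though both sides still equal $0$), and the parenthetical about the ordering $r_1 \leq r_2$ persisting is worded backwards even though the conclusion is right since $r_1 \leq s \leq r_2$ holds at the boundary point $p_1 = H - q_2 s$.
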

\begin{proof}
  We shall now show that $B_2(D)$ equals~\eqref{tus}. Note that
  $B_2(0) = 0$ and~\eqref{tus} also equals 0 when $D = 0$. To see
  this, note that it is trivially zero (because $f(1) = 0$) when $s =
  1$ and when $s \neq 1$, then it is zero because the value at $q =
  (1-\min(1, s))/(1-s)$ equals 0. So we shall assume below that $D >
  0$. The optimization problem corresponding to $B_2(D)$ is: 
\begin{equation}\label{ks}
\begin{aligned}
& \underset{p,q \in [0,1]^{2}}{\textnormal{minimize}}
& & \sum_{j: q_j > 0} q_j f\left(\frac {p_j} {q_j} \right) +f'(\infty)
\sum_{j:q_j = 0} p_j \\ 
& \textnormal{subject to}
& & p_j\ge0, \; q_j\ge 0 \textnormal{ for } j = 1, 2 \\
& & &  p_1 + p_2  = q_1 + q_2 = 1\\
& & & \min(p_1, q_1 s) + \min(p_2, q_2 s) = H.
\end{aligned}
\end{equation}
Note that we have equality as opposed to $\leq$ in the last constraint
above. This is because of the fact that for every $(p_1, p_2)$ and
$(q_1, q_2)$ lying in the constraint set for which the last constraint
is not tight, we can get $(\tilde{p}_1, \tilde{p}_2)$ and
$(\tilde{q}_1, \tilde{q}_2)$ still lying in the constraint set with
the last constraint satisfied with an equality sign and for which the
objective function is reduced. 

We will now finish the proof by showing that the optimal value of the
optimization problem~\eqref{ks} is~\eqref{tus}. Let $(p_1, p_2)$ and
$(q_1, q_2)$ satisfy the constraint set with $p_1/q_1 \leq 1 \leq
p_2/q_2$. If $s \notin [p_1/q_1, p_2/q_2]$, then clearly $\min(p_1,
q_1 s) + \min(p_2, q_2 s) = \min(1, s)$ and such $(p_1, p_2)$ and
$(q_1, q_2)$ do not satisfy the constraint set because $D > 0$. So we
assume that $s \in [p_1/q_1, p_2/q_2]$. In this case, the final
constraint gives $p_1 = H - q_2 s$. We can therefore write each of
$p_1, p_2$ and $q_1$ in terms of $q_2$. Plugging these values in the
objective function leads to the function in~\eqref{tus} (with $q$
replaced by $q_2$). The fact that each of $p_1, p_2, q_1$ and $q_2$
need to lie between 0 and 1 gives the constraint $0 \leq q_2 \leq
H/s$. The proof is complete.   
\end{proof}

For completeness, let us note the special case of the above lemma in
the case of the total variation distance, which corresponds to $s =
1$. This result is due to Gilardoni~\cite[Proposition
2.1]{Gilardoni06}. 

\begin{corollary}[Gilardoni]\label{frd}
  Let $m = 1$ and consider the quantity $B(V)$ where $D_f$ is an
  arbitrary $f$-divergence and $D_{f_1}(P||Q)$ equals $V(P, Q)$, the
  total variation distance between $P$ and $Q$. Then, for every $0
  \leq V \leq 1$, 
  \begin{equation}\label{frd.1}
    B(V) := \inf \left\{T(q, V) : 0 \leq q \leq 1-V \right\}
  \end{equation}
where
\begin{equation*}
T(q, V) := (1-q) f \left(\frac{1-V-q}{1-q} \right) + q f \left(\frac{q+V}{q}
\right). 
\end{equation*}
Consequently, for every pair of probability  measures $P$ and $Q$, we
have the inequality
 \begin{equation}\label{frd.2}
D_f(P||Q) \geq \inf \left\{T(q, V(P, Q)) : 0 \leq q \leq 1 - V(P, Q)
\right\} 
 \end{equation}
Moreover, this represents the sharpest possible inequality between
$D_f$ and total variation distance.  
\end{corollary}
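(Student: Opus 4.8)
The plan is to obtain Corollary~\ref{frd} as the special case $s = 1$ of the preceding lemma, combined with the general sharpness property of $B$ recorded in the introduction. First I would recall from Remark~\ref{prim} that the total variation distance is exactly the primitive divergence $D_{u_1}$, i.e.\ it corresponds to $f_1 = u_s$ with $s = 1$; in particular it is a finite divergence, so the lemma applies. Consequently the quantity $B(V)$ of the corollary is precisely the quantity $B(D)$ of the lemma evaluated at $s = 1$ and $D = V$.

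With $s = 1$ one has $\min(1, s) = 1$, so the auxiliary quantity becomes $H = \min(1, s) - D = 1 - V$, and the admissible range $0 \le D \le \min(1, s)$ becomes $0 \le V \le 1$; the latter imposes nothing new, since the total variation distance always lies in $[0, 1]$. Next I would substitute $s = 1$ directly into the expression~\eqref{tus}. The upper limit $H/s$ of the infimum collapses to $1 - V$, the first argument $(H - qs)/(1-q)$ simplifies to $(1 - V - q)/(1-q)$, and the second argument $(1 + qs - H)/q$ simplifies to $(q + V)/q$. Hence~\eqref{tus} reduces to $\inf\{T(q, V) : 0 \le q \le 1 - V\}$, which is exactly the right-hand side of~\eqref{frd.1}. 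This establishes~\eqref{frd.1}; the only care needed is in handling the boundary values $q = 0$ and $q = 1 - V$, where the standard $f$-divergence conventions involving $f(0)$ and $f'(\infty)$ are to be used, precisely as in the lemma.

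For the inequality~\eqref{frd.2} I would invoke the general lower bound~\eqref{sj} in the case $m = 1$ with $D_{f_1}$ equal to the total variation distance: for every pair $P, Q$ we have $D_f(P||Q) \ge B(V(P, Q))$, and substituting the formula for $B(V)$ just derived yields~\eqref{frd.2}. Finally, the sharpness assertion is not a separate computation but a direct consequence of the structural fact stated in the introduction, namely that $B$ is the \emph{largest} function satisfying the monotonicity relation~\eqref{mb} for which~\eqref{sj} holds for all $P$ and $Q$; this is exactly the claim that~\eqref{frd.2} is the sharpest possible inequality between $D_f$ and $V$. The proof thus involves no genuine difficulty beyond the bookkeeping of the substitution $s = 1$; the one point that warrants a second look is verifying that the range constraint $0 \le q \le H/s$ of the lemma collapses correctly to $0 \le q \le 1 - V$ and that the boundary conventions are consistent.
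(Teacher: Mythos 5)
Your proposal is correct and is exactly the route the paper takes: the paper states Corollary~\ref{frd} as the special case $s=1$ of the preceding lemma, and your substitution ($H = 1-V$, range $0 \le q \le 1-V$, arguments simplifying to $(1-V-q)/(1-q)$ and $(q+V)/q$) together with the appeal to~\eqref{sj} and the extremality property of $B$ for sharpness is precisely what is intended. Nothing further is needed.
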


Although the expression~\eqref{frd.1} cannot be simplified further in
general, one can get much simpler expressions for $B(V)$ in certain
special cases. One such special case of interest corresponds to
\textit{symmetric} $f$-divergences. An $f$-divergence is said to be
symmetric if the underlying convex function $f$ satisfies the
identity: $f(x) = x f(1/x)$ for all $x \in (0, \infty)$. It is easy to
check that under this condition, one has $D_f(P||Q) = D_f(Q||P)$ for
all $P$ and $Q$. Examples of symmetric divergences include the total
variation distance, squared Hellinger distance, triangular
discrimination and the Jensen-Shannon divergence. The following result
is due to Gilardoni~\cite{Gilardoni06}. We include it here for
completeness and also because our proof is more direct than that 
in~\cite{Gilardoni06}. 
\begin{corollary}[Gilardoni]\label{symmrd}
  Let $m = 1$ and consider the quantity $B(V)$ where $D_f$ is a
  symmetric $f$-divergence and $D_{f_1}(P||Q)$ equals $V(P, Q)$, the
  total variation distance between $P$ and $Q$. Then, for every $0
  \leq V \leq 1$, 
  \begin{equation}\label{symmrd.1}
B(V) = (1-V) f \left(\frac{1+V}{1-V} \right). 
  \end{equation}
 Consequently, for every pair of probability
 measures $P$ and $Q$, we have 
 \begin{equation}\label{symmrd.2}
   D_f(P||Q) \geq (1-V(P, Q)) f \left(\frac{1+V(P, Q)}{1-V(P, Q)} \right). 
 \end{equation}
Moreover, this represents the sharpest possible inequality between the
symmetric divergence $D_f$ and total variation distance.  
\end{corollary}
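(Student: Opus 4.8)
The plan is to invoke Corollary~\ref{frd}, which has already reduced the computation of $B(V)$ to the one-dimensional problem $B(V) = \inf\{T(q,V) : 0 \le q \le 1-V\}$, and then to use the symmetry identity $f(x) = x f(1/x)$ to locate the minimizer explicitly. The natural candidate is $q^\star = (1-V)/2$: in the parametrization underlying $T$ (where $P = (q+V, 1-V-q)$ and $Q = (q, 1-q)$), this $q$ corresponds to the pair $\tilde P = \left(\frac{1+V}{2}, \frac{1-V}{2}\right)$ and $\tilde Q = \left(\frac{1-V}{2}, \frac{1+V}{2}\right)$, which are exchanged by coordinate swapping and satisfy $V(\tilde P, \tilde Q) = V$. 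A direct substitution gives $T(q^\star, V) = \frac{1+V}{2} f\left(\frac{1-V}{1+V}\right) + \frac{1-V}{2} f\left(\frac{1+V}{1-V}\right)$, and applying $f(1/x) = f(x)/x$ with $x = \frac{1+V}{1-V}$ collapses this to $(1-V) f\left(\frac{1+V}{1-V}\right)$, the value claimed in~\eqref{symmrd.1}.

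First I would establish that $T(\cdot, V)$ is symmetric about $q^\star$, i.e. $T(q, V) = T(1-V-q, V)$ for all $q \in [0, 1-V]$. Setting $\tilde q = 1-V-q$ and expanding, the two terms of $T(\tilde q, V)$ become $(V+q) f\left(\frac{q}{V+q}\right)$ and $(1-V-q) f\left(\frac{1-q}{1-V-q}\right)$; rewriting each via $f(x) = x f(1/x)$ turns them, respectively, into $q f\left(\frac{q+V}{q}\right)$ and $(1-q) f\left(\frac{1-V-q}{1-q}\right)$, which are exactly the second and first terms of $T(q,V)$. This reflection symmetry is precisely the step where the hypothesis that $D_f$ is symmetric is used.

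Next I would observe that each summand of $T(\cdot, V)$ is the perspective $v\, f(u/v)$ of the convex function $f$ evaluated along an affine segment in $(u,v)$ — with $(u,v) = (1-V-q, 1-q)$ for the first term and $(u,v) = (q+V, q)$ for the second — and is therefore convex in $q$; hence $T(\cdot, V)$ is convex on $[0, 1-V]$. Combining convexity with the reflection symmetry then finishes the argument: for any $q$, midpoint convexity applied to the pair $q,\ 1-V-q$ (whose average is $q^\star$) gives $T(q^\star, V) \le \frac{1}{2}\big(T(q,V) + T(1-V-q, V)\big) = T(q,V)$, so $q^\star$ is a global minimizer and $B(V) = T(q^\star, V) = (1-V) f\left(\frac{1+V}{1-V}\right)$. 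The pointwise bound~\eqref{symmrd.2} and its sharpness are then immediate from the defining property of $B$ recorded around~\eqref{sj}. The main technical care is at the endpoint $q = 0$, where the second perspective degenerates ($v = q \downarrow 0$) and must be read through its closure, $\lim_{q \downarrow 0} q f\left(\frac{q+V}{q}\right) = V f'(\infty)$, so that convexity persists up to the boundary; the extreme case $V = 1$, where~\eqref{symmrd.1} formally reads $0 \cdot f(\infty)$, is handled by the same limiting convention together with the observation that symmetry forces $f'(\infty) = f(0)$.
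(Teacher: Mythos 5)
Your proposal is correct and follows essentially the same route as the paper: both reduce to Corollary~\ref{frd}, establish the reflection symmetry $T(q,V)=T(1-V-q,V)$ from $f(x)=xf(1/x)$, prove convexity of $q\mapsto T(q,V)$, and conclude that the midpoint $q^\star=(1-V)/2$ is the minimizer. Your justification of convexity via the perspective function (where the paper simply invokes one-sided derivatives) and your explicit treatment of the endpoint $q\downarrow 0$ and the case $V=1$ are welcome refinements but do not change the argument.
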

\begin{proof}
  We shall show that the right hand side of~\eqref{frd.1} equals the
  right hand side of~\eqref{symmrd.1} when $D_f$ is a symmetric
  divergence. Consider the quantity $T(q, V)$ defined in
  Corollary~\ref{frd}. Because $f(x) = xf(1/x)$, it can be easily
  checked that 
  \begin{equation*}
    T(q, V) = T(1-q-V, V) \qt{for all $q \in [0, 1-V]$}. 
  \end{equation*}
In other words, the function $q \mapsto T(q, V)$ is symmetric in the
interval $[0, 1-V]$ about the mid-point $(1-V)/2$. Moreover, as can be
checked by taking derivatives (one-sided derivatives if $f$ is not
differentiable), $q \mapsto T(q, V)$ is convex on $[0, 1-V]$ (this fact
does not require $f$ to be symmetric). These two facts clearly imply
that 
\begin{equation*}
 \inf_{0 \leq q \leq 1-V} T(q, V) = T \left(\frac{1-V}{2}, V \right)
  = (1-V) f \left(\frac{1+V}{1-V} \right)
\end{equation*}
which completes the proof. 
\end{proof}

\subsection{Chi-squared divergence}\label{secchi}
In this section, we describe another situation where the conclusion of
Theorem~\ref{main} can be further simplified. 
\begin{theorem}
  Let $m=1$ and consider the quantity $A(D)$ where $D_f$ is the
  chi-squared divergence, $\chi^2(P||Q)$ which corresponds to $f(x) :=
  x^2 - 1$. Also let the function $f_1$ be such that the function
  $h:(0, \infty) \rightarrow (0, \infty)$ defined by $h(x) :=
  (1+f_1(x))/x$ is a strictly increasing, strictly convex, twice
  differentiable bijective mapping. Then $A(D) = h^{-1}(D+1) - 1$,
  where $h^{-1}$ denotes the inverse function of $h$ on $(0, \infty)$. 
\end{theorem}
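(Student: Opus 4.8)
The plan is to establish the two matching bounds $A(D)\le h^{-1}(D+1)-1$ and $A(D)\ge h^{-1}(D+1)-1$. The first will be a \emph{pointwise} inequality valid for every pair $P,Q$, obtained by passing to the likelihood ratio and applying Jensen's inequality to the convex function $h$; the second will be witnessed by one explicit pair in $\Ps_2$. The whole argument runs on the identity $f_1(x)=x\,h(x)-1$, which is just a rearrangement of the definition $h(x)=(1+f_1(x))/x$, so the reduction of Theorem~\ref{main} will not even be needed here.

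For the upper bound I would first note that the constraint $D_{f_1}(P||Q)\le D$ forces $P\ll Q$. Indeed, since $h$ is a bijection of $(0,\infty)$ we have $h(\infty)=\infty$, hence $f_1'(\infty)=\lim_{x\uparrow\infty}f_1(x)/x=\lim_{x\uparrow\infty}\big(h(x)-1/x\big)=\infty$; so if $P\{q=0\}>0$ then $D_{f_1}(P||Q)=\infty$, violating feasibility. (This also rules out the a priori possibility $\chi^2(P||Q)=\infty$ on the feasible set.) I may therefore set $r:=dP/dQ$, use $dP=r\,dQ$ and $f(x)=x^2-1$, and record the two identities
\[
1+\chi^2(P||Q)=\E_Q[r^2]=\int r\,dP=\E_P[r],
\qquad
1+D_{f_1}(P||Q)=\E_Q[1+f_1(r)]=\E_Q[r\,h(r)]=\E_P[h(r)].
\]
Jensen's inequality for the convex $h$ gives $\E_P[h(r)]\ge h(\E_P[r])$, so combining the displays yields the clean pointwise bound $h\big(1+\chi^2(P||Q)\big)\le 1+D_{f_1}(P||Q)$. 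Since $h$ is increasing this rearranges to $\chi^2(P||Q)\le h^{-1}\big(1+D_{f_1}(P||Q)\big)-1\le h^{-1}(D+1)-1$ whenever $D_{f_1}(P||Q)\le D$, proving $A(D)\le h^{-1}(D+1)-1$.

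For the lower bound I would set $r^*:=h^{-1}(D+1)$, which is finite because $h$ is onto $(0,\infty)$, and satisfies $r^*\ge 1$ because $h(1)=1+f_1(1)=1$ and $D\ge 0$. Then I take $P=(1,0)$ and $Q=(1/r^*,\,1-1/r^*)$ in $\Ps_2$, so that the first atom has likelihood ratio $r^*$ and the second has ratio $0$. A direct computation gives $\chi^2(P||Q)=(1/r^*)(r^*)^2-1=r^*-1$. For the constraint, $f_1(0)=\lim_{x\downarrow 0}\big(x\,h(x)-1\big)=-1$ since $h(0+)=0$, so
\[
D_{f_1}(P||Q)=\tfrac{1}{r^*}f_1(r^*)+\big(1-\tfrac{1}{r^*}\big)f_1(0)=\big(h(r^*)-\tfrac{1}{r^*}\big)-\big(1-\tfrac{1}{r^*}\big)=h(r^*)-1=D.
\]
Thus the pair is feasible and attains $\chi^2(P||Q)=r^*-1=h^{-1}(D+1)-1$, matching the upper bound.

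The step I expect to be the main obstacle is recognizing \emph{how} the Jensen bound is saturated. Naively one would want $r$ degenerate under $P$, but that seems to force $r\equiv 1$ and hence $\chi^2=0$. The resolution is the configuration above, in which $P$ is a point mass at $r^*$ while $Q$ carries an extra atom at likelihood ratio $0$ that contributes nothing to $P$; this extra atom is precisely what decouples the relation $\E_P[h(r)]=D+1$ from the normalization of $Q$ and makes the value $h^{-1}(D+1)-1$ achievable. Beyond this conceptual point, the remaining care is purely bookkeeping: checking $f_1(0)=-1$, confirming $r^*\ge 1$ so that $Q$ is a genuine probability vector, and noting that $h$'s convexity and monotonicity (rather than the full strict-convexity/differentiability hypotheses) are all that the argument actually consumes.
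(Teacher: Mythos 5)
Your proof is correct, and it lands on the same witness as the paper: your pair $P=(1,0)$, $Q=(1/r^*,1-1/r^*)$ is exactly the paper's $P=(1,0,0)$, $Q=(1/a^*,1-1/a^*,0)$ with $a^*=h^{-1}(D+1)$, minus a padding coordinate. The convexity input is also essentially the same, but packaged differently in a way that is worth noting. The paper first invokes Theorem~\ref{main} to reduce to $P,Q\in\Ps_3$, then uses the supporting-line inequality $h(x)\ge h(a)+h'(a)(x-a)$, rewritten as a quadratic lower bound on $1+f_1(x)$, sums over the three coordinates, and only at the end optimizes over the tangent point to find $a^*=h^{-1}(D+1)$. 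You instead change measure to $P$ via the identities $1+\chi^2(P||Q)=\E_P[r]$ and $1+D_{f_1}(P||Q)=\E_P[h(r)]$ and apply Jensen, which is the paper's argument with the tangent point already set to its optimal value $\E_P[r]$. This buys two things: the upper bound becomes a genuine pointwise inequality $h\bigl(1+\chi^2(P||Q)\bigr)\le 1+D_{f_1}(P||Q)$ valid on arbitrary measurable spaces, so Theorem~\ref{main} is not needed at all; and it makes visible that only convexity and monotonicity of $h$ are consumed, not strict convexity or twice differentiability.

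One small repair is needed. Jensen in the form $\E_P[h(r)]\ge h(\E_P[r])$ presupposes $\E_P[r]<\infty$, and your parenthetical attributes the finiteness of $\chi^2(P||Q)$ on the feasible set to $P\ll Q$, which is not enough: absolute continuity does not prevent $\int p^2/q\,d\lambda=\infty$. What does the job is the supporting-line form of the same inequality: since $h(x)\ge h(a)+h'(a)(x-a)$ with $h'(a)>0$, the constraint $\E_P[h(r)]\le D+1<\infty$ forces $\E_P[r]<\infty$, after which Jensen applies. With that adjustment (and a word about the degenerate case $r^*=1$, where $Q=(1,0)$ and $D=0$), the argument is complete and somewhat cleaner than the paper's.
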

\begin{proof}
By Theorem~\ref{main}, $A(D)$ equals the optimal value of the problem: 
\begin{equation*}
\begin{aligned}
& \underset{p,q \in [0,1]^{3}}{\textnormal{maximize}}
& & \sum_{j: q_j > 0} \frac{p_j^2}{q_j} - 1 +\infty \cdot
\sum_{j:q_j = 0} p_j \\ 
& \textnormal{subject to}
& & p_j \ge 0, \; q_j \ge 0 \textnormal{ for all } j = 1, 2, 3\\
& & & \sum p_j  = \sum q_j = 1\\
& & & \sum_{j: q_j > 0} q_j f_1 \left(\frac {p_j} {q_j} \right) +
f_1'(\infty) \sum_{j:q_j = 0} p_j \le D
\end{aligned}
\end{equation*}
By convexity of $h$, we have
\begin{equation}\label{kuku}
  h(x) \geq h(a) + h'(a)(x-a) 
\end{equation}
for every $x > 0$ and $a > 0$. One consequence of this and the fact
that $h$ is strictly increasing is that
\begin{equation*}
  h(1) + h'(1) (x-1) \leq  h(x) \leq h(1)
\end{equation*}
for all $x \in (0, 1)$. This implies that $\lim_{x \downarrow 0} x
h(x) = 0$ and as a result 
\begin{equation*}
  f_1(0) = \lim_{x \downarrow 0} f_1(x) = \lim_{x \downarrow 0}
  \left(x h(x) - 1 \right) = -1
\end{equation*}
Further, because $h$ is strictly increasing, we have $h'(a) > 0$ and
thus   
\begin{equation*}
  f_1'(\infty) = \lim_{x \rightarrow \infty} h(x) = \infty
\end{equation*}
which implies that we only need to consider $P$ and $Q$ for which
$\sum_{j: q_j = 0} p_j = 0$. Writing~\eqref{kuku} in terms of
$f_1(x)$, we obtain 
\begin{equation*}
  1+f_1(x) \geq x \left(h(a) - ah'(a) \right) + x^2 h'(a). 
\end{equation*}
for every $x > 0$ and also at $x = 0$ (because $f_1(0) := \lim_{x
  \downarrow 0} f_1(x)$). Applying this inequality to $x = p_j/q_j$ for
$q_j > 0$ and then multiplying by $q_j$, we obtain 
\begin{equation*}
  q_j + q_j f_1(p_j/q_j)  \geq p_j \left(h(a) - ah'(a) \right) +
  \frac{p_j^2}{q_j} h'(a)
\end{equation*}
for each $j = 1, 2, 3$. As a result, we get
\begin{equation*}
h'(a) \sum_{j:q_j > 0} \frac{p_j^2}{q_j} \leq \sum_{j:q_j > 0} q_j f_1
\left(\frac{p_j}{q_j} \right) + 1 - h(a) + ah'(a)
\end{equation*}
Because $P$ and $Q$ satisfy the constraint, we have 
\begin{equation*}
  \sum_{j:q_j > 0} q_j f_1 \left(\frac{p_j}{q_j} \right) \leq D
\end{equation*}
and hence
\begin{equation*}
\sum_{j:q_j > 0} \frac{p_j^2}{q_j} - 1 \leq  \left[\frac{
    D + 1 - h(a) + ah'(a)}{h'(a)} \right] - 1. 
\end{equation*}
Because $a > 0$ is arbitrary, we get 
\begin{equation*}
  A(D) \leq \inf_{a > 0}\left[\frac{
    D + 1 - h(a) + ah'(a)}{h'(a)} \right] - 1. 
\end{equation*}
By elementary algebra, the above infimum is achieved at $a^* =
h^{-1}(D+1)$ and we then obtain $A(D) \leq h^{-1}(D+1) - 1$. To see
that $A(D)$ is exactly equal to $h^{-1}(D+1) - 1$, observe that the
probabilities $P = (1, 0, 0)$ and $Q = (1/a^*, 1-1/a^*, 0)$ satisy
$D_{f_1}(P||Q) = D$ and $\chi^2(P||Q) = h^{-1}(D+1) - 1$. 
\end{proof}
The function $f_1(x) = x^l - 1$ for $l > 2$ clearly satisfies the
conditions of the above theorem. We therefore obtain the following
result as a simple corollary. 
\begin{corollary}
  Let $m = 1$ and consider the quantity $A(D)$ where $D_f(P||Q) =
  \chi^2(P||Q)$ and $D_{f_1}$ is the power divergence,
  $D^{(l)}(P||Q)$, corresponding to $f_1(x) = x^l - 1$ for $l >
  2$. Then $A(D) = (1+D)^{1/(l-1)} - 1$. This yields the sharp
  inequality
  \begin{equation*}
    \chi^2(P||Q) + 1 \leq \left(1+D^{(l)}(P||Q) \right)^{1/(l-1)}
  \end{equation*}
  between the chi-squared divergence and power divergence for $l >
  2$. 
\end{corollary}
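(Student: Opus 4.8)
The plan is to deduce this corollary directly from the preceding theorem by verifying its hypotheses for the specific choice $f_1(x) = x^l - 1$. That theorem expresses $A(D)$ through the auxiliary function $h(x) := (1 + f_1(x))/x$, so the first step is simply to compute $h$ for this $f_1$. Substituting yields $h(x) = (1 + x^l - 1)/x = x^{l-1}$, a clean power function.

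Next I would check that $h(x) = x^{l-1}$ satisfies all four requirements imposed by the theorem, using the assumption $l > 2$. Since $l - 1 > 1 > 0$, the derivative $h'(x) = (l-1) x^{l-2}$ is strictly positive, so $h$ is strictly increasing; the second derivative $h''(x) = (l-1)(l-2) x^{l-3}$ is strictly positive precisely because $l > 2$, so $h$ is strictly convex; $h$ is manifestly smooth and hence twice differentiable; and because the exponent $l-1$ is positive, $x \mapsto x^{l-1}$ is a bijection of $(0, \infty)$ onto itself. This is the only place where the strict inequality $l > 2$ (rather than merely $l > 1$) is used, and I would flag that the factor $(l-2)$ in $h''$ is exactly what forces it.

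With the hypotheses confirmed, I would invert $h$ to obtain $h^{-1}(y) = y^{1/(l-1)}$ and apply the theorem's formula $A(D) = h^{-1}(D+1) - 1$ to conclude $A(D) = (1 + D)^{1/(l-1)} - 1$. Finally, to produce the advertised sharp inequality, I would specialize the general sharp inequality~\eqref{si} to $D_f = \chi^2$ and $D_{f_1} = D^{(l)}$, which gives $\chi^2(P||Q) \le A(D^{(l)}(P||Q)) = (1 + D^{(l)}(P||Q))^{1/(l-1)} - 1$; rearranging produces the stated form $\chi^2(P||Q) + 1 \le (1 + D^{(l)}(P||Q))^{1/(l-1)}$. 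There is no genuine obstacle here: the entire content lies in the hypothesis check of the second step, and sharpness is inherited directly from the sharpness of $A$ noted after~\eqref{si}, with the extremal pair $P = (1,0,0)$, $Q = (1/a^*, 1 - 1/a^*, 0)$ from the theorem's proof (where $a^* = (1+D)^{1/(l-1)}$) already certifying that the bound is attained.
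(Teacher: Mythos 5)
Your proposal is correct and follows exactly the route the paper intends: the paper's entire justification is the one-line remark that $f_1(x)=x^l-1$ for $l>2$ "clearly satisfies the conditions of the above theorem," and you have simply made that verification explicit (computing $h(x)=x^{l-1}$, checking strict monotonicity, strict convexity via the factor $l-2$, smoothness, and bijectivity, then inverting $h$ and invoking~\eqref{si} for sharpness). Nothing is missing and nothing differs in substance from the paper's argument.
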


\section{Numerical Computation}\label{nucom}
In this section we explore numerical methods for solving the
optimization problems~\eqref{finiteA} and~\eqref{finiteB} in order to
compute $A(D_1,\dots,D_m)$ and $B(D_1,\dots,D_m)$ respectively. In
Section~\ref{leca}, we consider the special case when $D_f$ is a
primitive divergence. This special case is motivated by the
statistical problem of obtaining lower bounds for the minimax risk and
we show that the quantity $A(D_1, \dots, D_m)$ can be computed
exactly via convex optimization for every $m \geq 1$ and every
arbitrary choice of $D_{f_1}, \dots, D_{f_m}$. In Section~\ref{heer},
we consider the special case $m = 1$ and demonstrate
that~\eqref{finiteA} and~\eqref{finiteB} can be solved for practically
any pair of $f$-divergences by a gridded search over the
low-dimensional parameter speace. We verify several known inequalities
and also improve on some existing inequalities that are not sharp. 

\subsection{Maximizing Primitive Divergences}\label{leca}
In this subsection we consider maximizing a primitive divergence
subject to upper bounds on arbitrary $f$-divergences.  While this
optimization problem is not a-priori convex, we reduce it to a
collection of convex problems. 

The optimization problem~\eqref{finiteA} where $D_f$ is a primitive
divergence is, of course, closely related to the problem of bounding
from above a primitive divergence subject to upper bounds on other
$f$-divergences. This latter problem arises in obtaining lower bounds
for the minimax risk in nonparametric statistical
estimation (see, for example,~\cite{GuntuFdiv, GuntuThesis, Yu97lecam, 
  Tsybakovbook}). For example, Le Cam's inequality, which is a 
popular technique for obtaining minimax lower bounds, says that the
minimax risk is bounded from below by a multiple of the $L_1$ affinity
between two probability measures $P$ and $Q$, where the $L_1$ affinity
is defined as $1-V(P,Q)$.  The $L_1$ affinity also appears in
Assouad's Lemma, another technique for  obtaining minimax lower
bounds.  Evaluating $V(P,Q)$ is hard because 
$P$ and $Q$ are typically product distributions of the form $P =
\otimes_{i=1}^n P_i$ (or mixtures of such distributions), so it is
difficult to express $V(P,Q)$ in terms of $V(P_i,Q_i)$ (which can be
easier to compute). 

Application of Le Cam's inequality in practice, therefore, requires
one to obtain a good upper bound on the  total variation, $V(P,Q)$.   
One typically first bounds $D_{f}(P||Q)$ for an $f$-divergence that
decouples for product distributions such as squared Hellinger,
chi-squared, or Kullback-Leibler divergence and then translates this
into a bound on $V(P,Q)$. It is common to use crude bounds like
Pinsker's inequality for this purpose and we believe there is room for
improvement by using tight bounds. Also, one typically uses only 
one $f$-divergence to bound $V(P, Q)$; but we shall argue here
that one gets better bounds (Figure~\ref{improvement}) when using
multiple divergences simultaneously. This is one of our motivations
for studying the case $m \geq 2$ as opposed to just $m = 1$. The
constants underlying minimax lower bounds might be improved by the use
of these better bounds addressing a common criticism  of minimax lower
bound techniques.  

Theorem~\ref{exact} below solves the problem of maximizing a primitive
divergence $D_{u_s}$ given constraints on $m$ other divergences
$D_{f_i}$ exactly via convex optimization.  This leads to a fast
algorithm with well-studied convergence properties. 

For each $m \geq 1$, let
\begin{equation*}
\mathcal{S}_m = \{\sigma \in \{-1,1\}^{m+2} : \sigma_i \leq \sigma_j
\textnormal{ for } i \leq j\}   
\end{equation*}
For each $\sigma \in \mathcal{S}_m$, let us consider the following
\textit{convex} optimization problem and denote its optimal value by
$V_{\sigma}(D_1, \dots, D_m)$. 
\begin{equation}\label{conopt}
\begin{aligned}
& \underset{p,q \in [0,1]^{m+2}}{\textnormal{maximize}}
& & \sum_{j=1}^{m+2} \sigma_j \left(p_j - s q_j \right)  \\ 
& \textnormal{subject to}
& & p_j \ge 0, \; q_j \ge 0 \textnormal{ for all } j = 1, \dots, m+2\\
& & & \sum p_j  = \sum q_j = 1\\
& & & \sum_{j: q_j > 0} q_j f_i \left(\frac {p_j} {q_j} \right) + f_i '
(\infty) \sum_{j:q_j = 0} p_j \le D_i
\end{aligned}
\end{equation}
for $i = 1, \dots, m$. Note that this problem is convex because the
objective function is linear and the constraint set is convex in $p_1,
\dots, p_{m+2}, q_1, \dots, q_{m+2}$. The fact that the constraint set
is convex is a consequence of the convexity of $D_{f_i}(P||Q)$
in $(P, Q)$ (see, for example,~\cite[Lemma 4.1]{CsiszarShields}). It
is also clear that this is a $2m+2$-dimensional optimization problem
because there are $2m+4$ variables in all which satisfy two linear
equality constraints. 

\begin{theorem}\label{exact}
Let $D_f$ denote the primitive $f$-divergence corresponding to $f =
u_s$ for some $s > 0$. Then 
\begin{equation}\label{convexA}
\begin{aligned}
A(D_1,\dots,D_m) = - \frac { \left | s - 1 \right | } 2 + \max_{\sigma
  \in \mathcal{S}_m} V_{\sigma}(D_1, \dots, D_m)
\end{aligned}
\end{equation}
Consequently, $A(D_1, \dots, D_m)$ can be computed by solving the
$|\mathcal{S}_m| = m+3$ convex optimization problems~\eqref{conopt}. 
\end{theorem}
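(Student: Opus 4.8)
The plan is to reduce the infinite-dimensional supremum to the finite problem~\eqref{finiteA} via Theorem~\ref{main}, to rewrite the primitive-divergence objective as a sum of absolute values, and then to recognise the resulting maximisation as a maximum of the $m+3$ convex programs~\eqref{conopt}. By the first part of Theorem~\ref{main} (which needs no finiteness hypothesis), $A(D_1,\dots,D_m)=A_{m+2}(D_1,\dots,D_m)$, so it is enough to optimise over $P=(p_1,\dots,p_{m+2})$ and $Q=(q_1,\dots,q_{m+2})$ in $\Ps_{m+2}$. For $D_f=D_{u_s}$, Remark~\ref{prim} gives $D_{u_s}(P||Q)=\min(1,s)-\psi_{P,Q}(s)=\min(1,s)-\sum_{j=1}^{m+2}\min(p_j,sq_j)$, an explicit function of $(p,q)$.

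The second step is an elementary identity. Using $\min(a,b)=\tfrac12(a+b-|a-b|)$ together with $\sum_j p_j=1$ and $\sum_j q_j=1$, I would obtain
\begin{equation*}
  D_{u_s}(P||Q)=-\frac{|s-1|}{2}+\frac12\sum_{j=1}^{m+2}\bigl|p_j-sq_j\bigr|,
\end{equation*}
where the constant uses $\min(1,s)-\tfrac12(1+s)=-\tfrac12|s-1|$. Hence, up to the additive constant $-|s-1|/2$, maximising $D_{u_s}$ over the feasible set of~\eqref{finiteA} is equivalent to maximising the $\ell_1$-type quantity $\sum_j|p_j-sq_j|$ over that same set.

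The third step turns this into the statement of the theorem. Writing $|p_j-sq_j|=\max_{\sigma_j\in\{-1,1\}}\sigma_j(p_j-sq_j)$ and summing gives $\sum_j|p_j-sq_j|=\max_{\sigma\in\{-1,1\}^{m+2}}\sum_j\sigma_j(p_j-sq_j)$; interchanging this finite maximum with the supremum over feasible $(p,q)$ (both are plain suprema of the same quantity) identifies the maximal $\ell_1$ quantity with $\max_{\sigma\in\{-1,1\}^{m+2}}V_\sigma$, where $V_\sigma$ is the optimal value of~\eqref{conopt}. It then remains to show the outer maximum is attained at a \emph{monotone} sign pattern $\sigma\in\mathcal{S}_m$. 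For this I would use that the constraints of~\eqref{conopt} ($\sum p_j=\sum q_j=1$ and each $D_{f_i}(P||Q)\le D_i$) are invariant under simultaneously permuting the coordinates of $p$ and $q$; hence one may relabel so that the likelihood ratios $r_j:=p_j/q_j$ are non-decreasing, whereupon $\operatorname{sign}(p_j-sq_j)=\operatorname{sign}(r_j-s)$ is non-decreasing in $j$, so the sign-matching $\sigma$ lies in $\mathcal{S}_m$. Concretely, if $(p,q)^*$ attains $\max_{(p,q)}\sum_j|p_j-sq_j|$, then after sorting its matching monotone $\sigma^*\in\mathcal{S}_m$ satisfies $V_{\sigma^*}\ge\sum_j|p_j^*-sq_j^*|$, so $\max_{\sigma\in\mathcal{S}_m}V_\sigma=\max_{\sigma\in\{-1,1\}^{m+2}}V_\sigma$. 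Combining steps two and three then expresses $A(D_1,\dots,D_m)$ through the constant $-|s-1|/2$ and $\max_{\sigma\in\mathcal{S}_m}V_\sigma$, which is the asserted identity~\eqref{convexA} (after the routine bookkeeping of the factor $\tfrac12$ relating $\sum_j|p_j-sq_j|$ to the objective of~\eqref{conopt}). Finally, $|\mathcal{S}_m|=m+3$ because a non-decreasing vector in $\{-1,1\}^{m+2}$ is determined by its number of $-1$'s (ranging over $\{0,1,\dots,m+2\}$), and each~\eqref{conopt} is convex as already noted.

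The step I expect to be the main obstacle is the reduction from all $2^{m+2}$ sign vectors to the $m+3$ monotone ones. The permutation argument must be carried out with care at the boundary coordinates: indices with $q_j=0$ correspond to $r_j=+\infty$ (and carry the $f_i'(\infty)$ terms, which vanish here since $u_s'(\infty)=0$), and indices with $p_j=q_j=0$ are vacuous; these must be placed appropriately in the ordering so that monotonicity of $\operatorname{sign}(r_j-s)$, and hence membership $\sigma\in\mathcal{S}_m$, is preserved. One must also verify that restricting each $V_\sigma$ to a fixed $\sigma$ while permuting $(p,q)$ is compatible with the interchange of maxima, i.e.\ that for the monotone $\sigma$ matching the sorted ratios the full value $\sum_j|p_j-sq_j|$ is actually realised within the corresponding instance of~\eqref{conopt}.
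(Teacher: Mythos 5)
Your proposal is correct and follows essentially the same route as the paper's proof: reduce to the finite problem~\eqref{finiteA} via Theorem~\ref{main}, expand $\min(p_j,sq_j)$ using $\min(a,b)=\tfrac{1}{2}(a+b-|a-b|)$ together with $\sum_j p_j=\sum_j q_j=1$, interchange the maximum over sign vectors with the maximum over feasible $(p,q)$, and use invariance of the constraint set under simultaneous permutation of the coordinates to restrict to the $m+3$ monotone sign patterns. The factor of $\tfrac{1}{2}$ that you defer to ``routine bookkeeping'' is in fact a genuine discrepancy with the displayed identity~\eqref{convexA} --- your (correct) computation yields $A=-\tfrac{1}{2}|s-1|+\tfrac{1}{2}\max_{\sigma}V_{\sigma}$, and the paper's own chain of equalities silently drops the same $\tfrac{1}{2}$ in its last step, as one can check at $s=1$ where $\max_{\sigma}V_{\sigma}=\max\sum_j|p_j-q_j|=2\max V(P,Q)$ --- so this is an issue with the statement as printed rather than with your argument.
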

\begin{proof}
Theorem \ref{main} asserts that $A(D_1, \dots, D_m)$ equals the
optimal value of the optimization problem~\eqref{finiteA}. Note that
the constraint sets of the problems~\eqref{finiteA} and~\eqref{conopt}
are the same. Let us denote this constraint set by $\bbC_m$ so that 
$$A(D_1,\dots,D_m) = \max_{P,Q\in \bbC_m} D_{u_s}(P||Q).$$  
The objective of~\eqref{finiteA} can be written as
\begin{equation*}
\begin{aligned}
D_{u_s} (P || Q) &= \min(1,s) - \sum_{j=1}^{m+2} \min(p_j,sq_j) \\
& = \min(1,s) - \frac 1 2 \sum_{j=1}^{m+2} p_j + sq_j - \left | p_j - sq_j \right | \\
& = - \frac { \left | s - 1 \right | } 2 + \max_{\sigma \in \{ -1,
  1\}^{m+2} }  \sum_{j=1}^{m+2} \sigma_j \left ( p_j - sq_j\right ). 
\end{aligned}
\end{equation*}
Because two maxima can always be interchanged, we have 
\begin{equation*}
\begin{aligned}
\underset{P,Q \in \bbC_m}{\max}  \left [\max_{\sigma \in
    \{-1,1\}^{m+2}}\sum_{j=1}^{m+2} \sigma_j(p_j-sq_j) \right ]  
 =  \max_{\sigma \in \{-1,1\}^{m+2}} \left [\max_{P,Q \in
     \mathbb{C}_m} \sum_{j=1}^{m+2}\sigma_j(p_j-sq_j)\right ].  
\end{aligned}
\end{equation*}
Note that the inner maximization in the right hand side above is precisely
the convex problem~\eqref{conopt}.

Because the optimal value of \eqref{conopt} is invariant to
permuting the indices of $\sigma$, we have the
reduction 
$$\max_{\sigma \in \{-1,1\}^{m+2}} \max_{P,Q \in \mathbb{C}_m}
\sigma^T(P-sQ) = \max_{\sigma \in \mathcal{S}_m} \max_{P,Q \in
  \mathbb{C}_m} \sigma^T(P-sQ).$$ 
This shows that we can restrict attention only to those
problems~\eqref{conopt} for $\sigma \in \mathcal{S}_m$. It is obvious
that $|\mathcal{S}_m| = m+3$. The proof is complete. 
\end{proof}

\begin{example}\label{hellinger}
Consider the special case of Theorem~\ref{exact} when $m = 1$, $s = 1$
and when $D_{f_1}$ is the squared Hellinger distance which corresponds
to $f_1(x) = (\sqrt{x} - 1)^2/2$. In other words, we consider the
problem of maximizing the total variation distance subject to an upper
bound on the Hellinger distance. The solution to this problem given by
Theorem~\ref{exact} is plotted in Figure~\ref{fig:simple}(a). Each red
dot shows $A(H) =: A_H^{TV}(H)$ computed by solving the four
4-dimensional convex optimization problems~\eqref{conopt} (each
corresponding to a $\sigma \in \mathcal{S}_1$). 

Note that the quantity $A_H^{TV}(H)$ can be obtained analytically in
a closed form. Indeed, since $f_1$ is a symmetric divergence, the sharp
inequality bounding the total variation distance by the squared
Hellinger distance is given by~\eqref{symmrd.2} with $f(x) =
(\sqrt{x}-1)^2$ (this inequality is usually attributed
to~\cite{LeCam:86book}) which implies that   
\begin{equation*}
  A^{TV}_H(H) = \sqrt{2H} \sqrt{1 - \frac{H}{2}}.
\end{equation*}
We have plotted this function analytically by the solid cyan line in
Figure~\ref{fig:simple}(a). It is clear that our numerical
optimization method given by Theorem~\ref{exact} agrees with the known
analytical bound. 
\end{example}

\begin{example}\label{kl}
For another simple application of Theorem~\ref{exact}, consider
maximizing the total variation subject to an upper bound on the
Kullback-Leibler divergence. In other words, we take $m = 1$, $s = 1$
and $f_1(x) = x\log x$ and plot the solution given by
Theorem~\ref{exact} in Figure~\ref{fig:simple}(b). Each black dot
shows $A(K) =: A^{TV}_{KL}(K)$ for a different value of $K$,
computed by solving the four 4-dimensional convex optimization
problems~\eqref{conopt}. The solid green line shows Pinsker's analytic
upper bound $\sqrt{2 K}$ which is not sharp for any $K>0$. 
\end{example}

\begin{figure}
	\centering
	\begin{subfigure}[b]{\textwidth}     
               \centering
              \quad \includegraphics[width=4.5in]{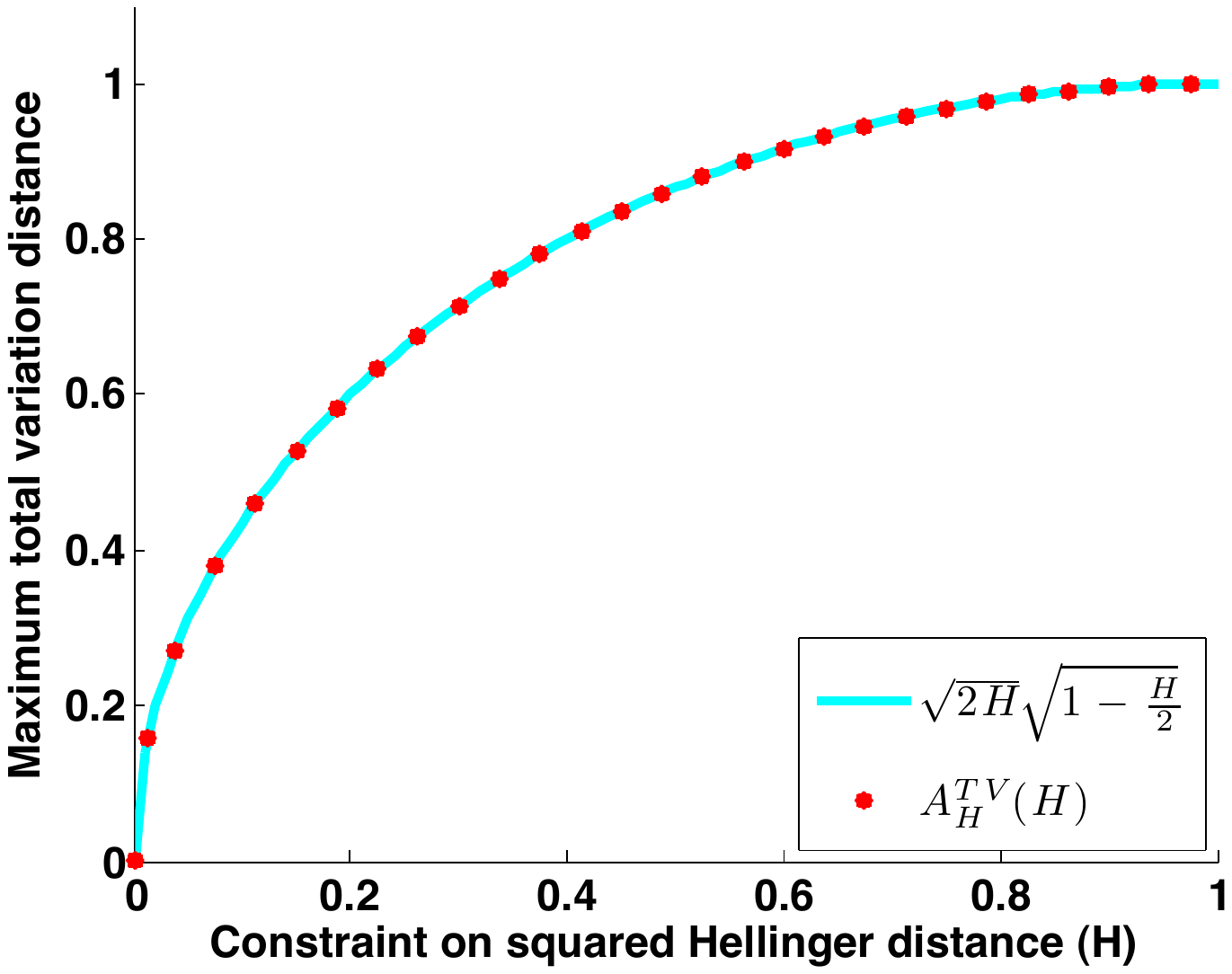}               
	\end{subfigure}\\
	\begin{subfigure}[b]{\textwidth}
		\centering
		\quad\includegraphics[width=4.5in]{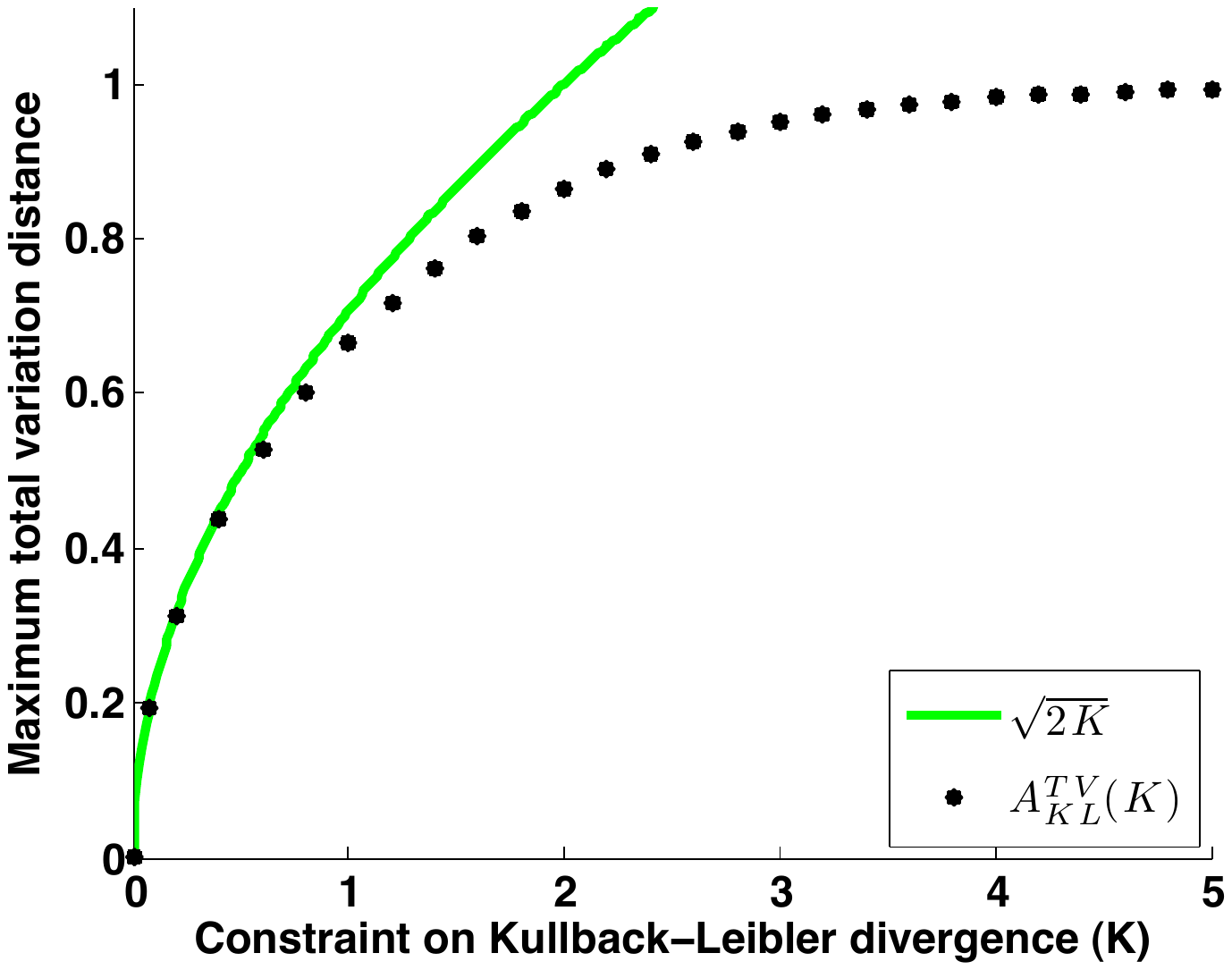}
	\end{subfigure}
        \caption{Two simple applications of Theorem~\ref{exact} discussed in
          examples \ref{hellinger} and \ref{kl}.  Here and in all subsequent plots we set the axis limits to the maximum value of the relevant $f$-divergence and to 5 in the case of the Kullback-Leibler divergence (which has no maximum value).} 
        \label{fig:simple}
\end{figure}

\begin{example}\label{tvhkl}
We now consider maximizing the total variation subject to constraints
on both the Hellinger distance and Kullback-Leibler divergence. In
other words, we take $m = 2$, $s = 1$, $f_1(x) = (\sqrt{x} - 1)^2/2$
and $f_2(x) = x \log x$. To the best of our knowledge, there does not
exist a closed form analytical solution to this problem. However,
numerical solution is straightforward by Theorem~\ref{exact} as shown
below. 

According to Theorem~\ref{exact}, for fixed $H,K \ge 0$ we can
compute $A(H,K) =: A^{TV}_{HKL}(H,K)$ by solving five
6-dimensional convex programs~\eqref{conopt}. Figure
\ref{twoConstraints} shows the function $A^{TV}_{HKL}(H,K)$ interpolated from 14884 
$(H,K)$ pairs.  We used CVX in MATLAB to solve the convex
programs.  The height of each point in the surface shows how large the
total variation can be when the squared Hellinger distance and
Kullback-Leibler divergence are bounded by $H$ and $K$
respectively.  As expected, the total variation is zero when either
$H = 0$ or $K = 0$, and it approaches 1 for large values of
$H$ and $K$.  Next, observe that the surface
$A_{HKL}^{TV}(H,K)$ is flat as $K$ varies for small $H$, and
vice-versa flat as $H$ varies for small $K$.  This is because only
one constraint is tight in these regions.  In other words, the surface
$A^{TV}_{HKL}(H,K)$ is approximately the point-wise minimum of the
two surfaces $A^{TV}_H(H)$ and $A^{TV}_{KL}(K)$, with a diagonal
ridge at the intersection of these two surfaces.  But, as can be seen
in Figure \ref{improvement}, our bound that simultaneously leverages
both single-coordinate bounds is strictly better than the simple
minimum of those two individual bounds for some $(H,K)$. In other
words, there exist $(H, K)$ such that 
\begin{equation}\label{inform}
\min \left(A^{TV}_H(H), A^{TV}_{KL}(K) \right) - 
A^{TV}_{HKL}(H,K) > 0
\end{equation}
The left hand side above is positive when both single-coordinate
bounds are informative, i.e. when both constraints in the optimization
problem~\eqref{finiteA} are active.  We will explain later (see Example
\ref{ex:hkl} and Figure \ref{fig:hkl}) that the location of
this ridge is predicted by an inequality between $D_H(P||Q)$ and
$D_{KL}(P||Q)$. 
\end{example}

\begin{figure}[h]
\centering
\includegraphics[width=4.5 in]{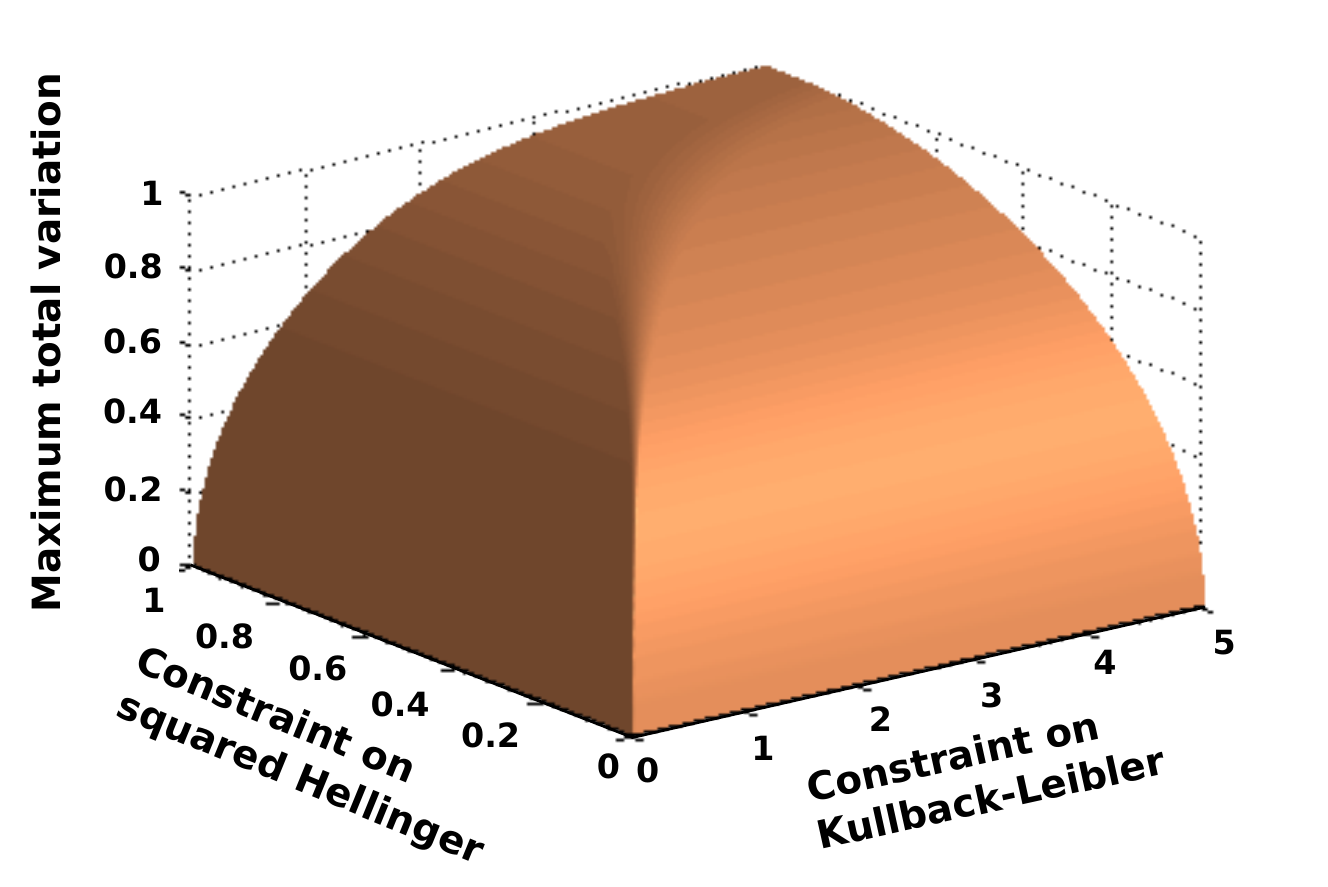}
\caption{The height of each point in the surface above shows
  $A^{TV}_{HKL}(H,K)$ for a different $(H,K)$ pair--the the
  least upper bound on total variation when squared Hellinger distance
  and Kullback-Leibler divergence are bounded by $H$ and $K$
  respectively (see example \ref{tvhkl}).} 
\label{twoConstraints}
\end{figure}

\begin{figure}[h]
\centering
\includegraphics[width=4.5 in]{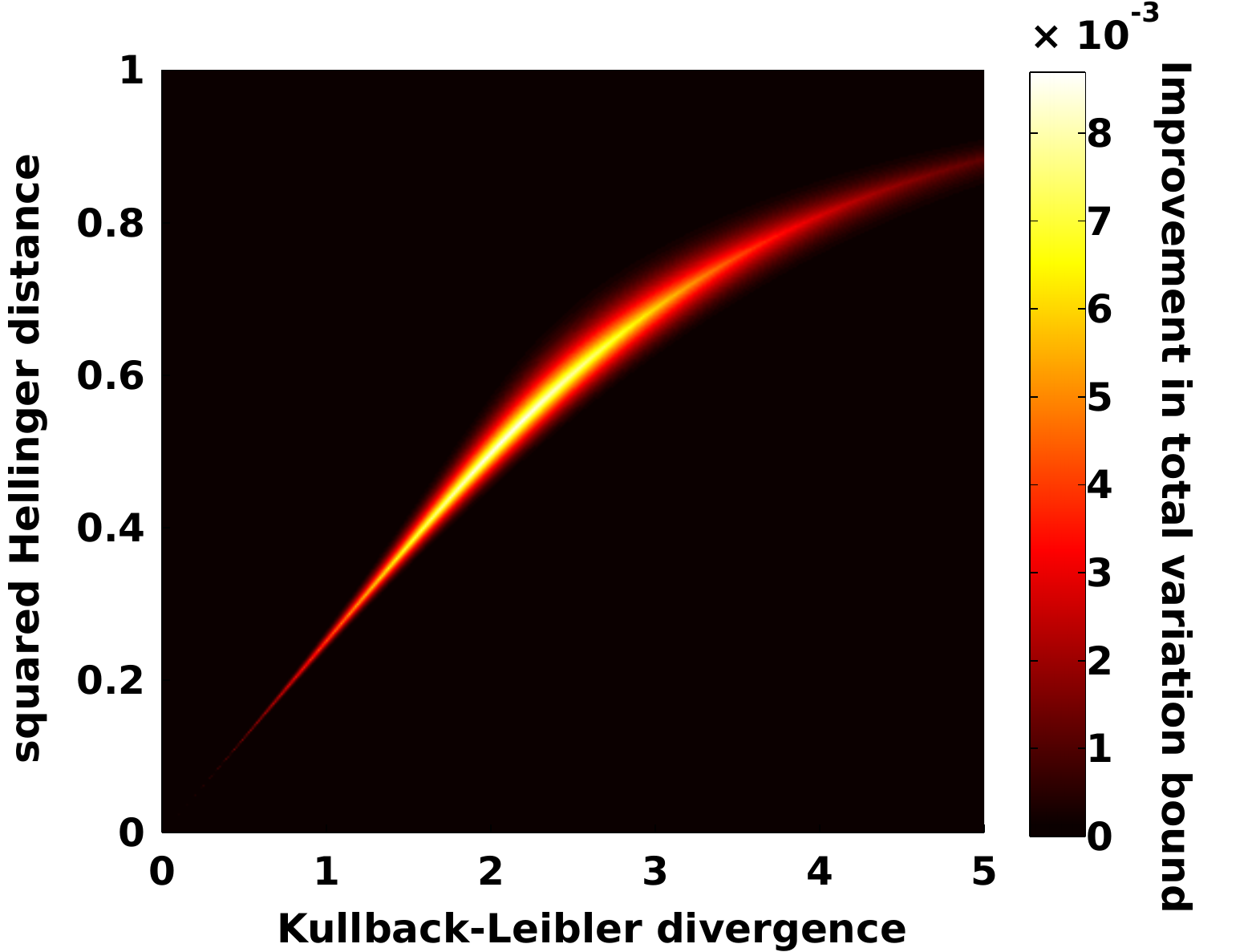}
\caption{Improvement over simple point-wise minimum of
    single-coordinate bounds. The color of the pixel at $(H,K)$
     represents the magnitude of the  left hand side of~\eqref{inform}. The
  bright region corresponds to $(H, K)$ for which the bound
  displayed in Figure~\ref{twoConstraints} is a strict improvement
  over the simple pointwise minimum of the two bounds shown in Figure
  \ref{fig:simple}.} 
\label{improvement}
\end{figure}

\subsection{The General Case}\label{heer}
Theorem~\ref{exact} requires $D_f$ to be a primitive divergence. We
do not know if, in general, the optimization problems~\eqref{finiteA}
and~\eqref{finiteB} can be solved by convex optimization
algorithms. However, if $m$ is not too large, heuristic optimization
techniques can be used. We demonstrate this in this subsection for $m
= 1$. 
\begin{example}\label{ex:hkl}
Consider the optimization problem~\eqref{finiteA} for $m=1$, $f(x) =
(\sqrt{x} - 1)^2/2$ and $f_1(x) = x \log x$. In other words, we
consider the problem of maximizing the squared Hellinger distance
subject to an upper bound on the Kullback-Leibler divergence. The
optimization problem~\eqref{finiteA} is clearly 4-dimensional (there
are six variables in all $p_1, p_2, p_3$ and $q_1, q_2, q_3$ but they
satisfy two linear constraints as they sum to one). Because the
variable space is only 4-dimensional, there was no trouble solving
this by gridding the parameter space. We plot the solution in
Figure~\ref{fig:hkl}(a) where each blue dot shows $A(K) =:
A^H_{KL}(K)$ for a different value of $K$. 

The quantity $A^H_{KL}(K)$ can be used to better understand the
inequality~\eqref{inform}. Indeed, when we overlay the curve $(K,
A_{KL}^H(K))$ on Figure~\ref{improvement} (see
Figure~\ref{fig:hkl}(b)), we see that the curve $(K, A_{KL}^H(K))$
(plotted by the blue line) lies above the region where the
inequality~\eqref{inform} holds. Only the constraint on $D_{KL}(P||Q)$ is
active in the optimization problem considered in Example~\ref{tvhkl}
when $H > A_{KL}^H(K)$.  For such $(H,K)$, therefore, the
inequality~\eqref{inform} does not hold. 

\begin{figure}
	\centering
	\begin{subfigure}[b]{\textwidth}
		\centering
		\quad \includegraphics[width =4.5 in ]{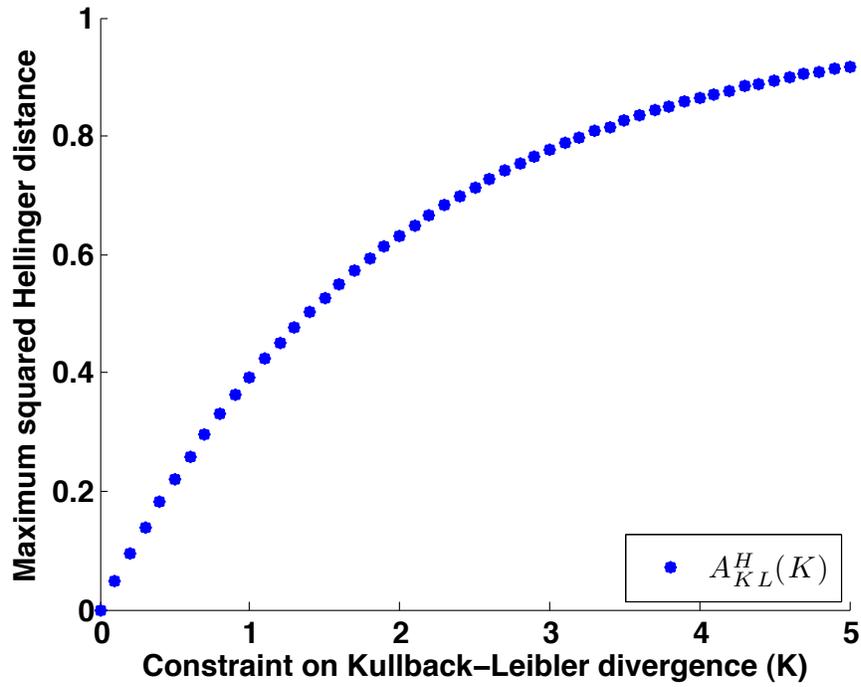}
	\end{subfigure}\\[1em]
	\begin{subfigure}[b]{\textwidth}   
		\centering  
		\quad \includegraphics[width=4.5 in]{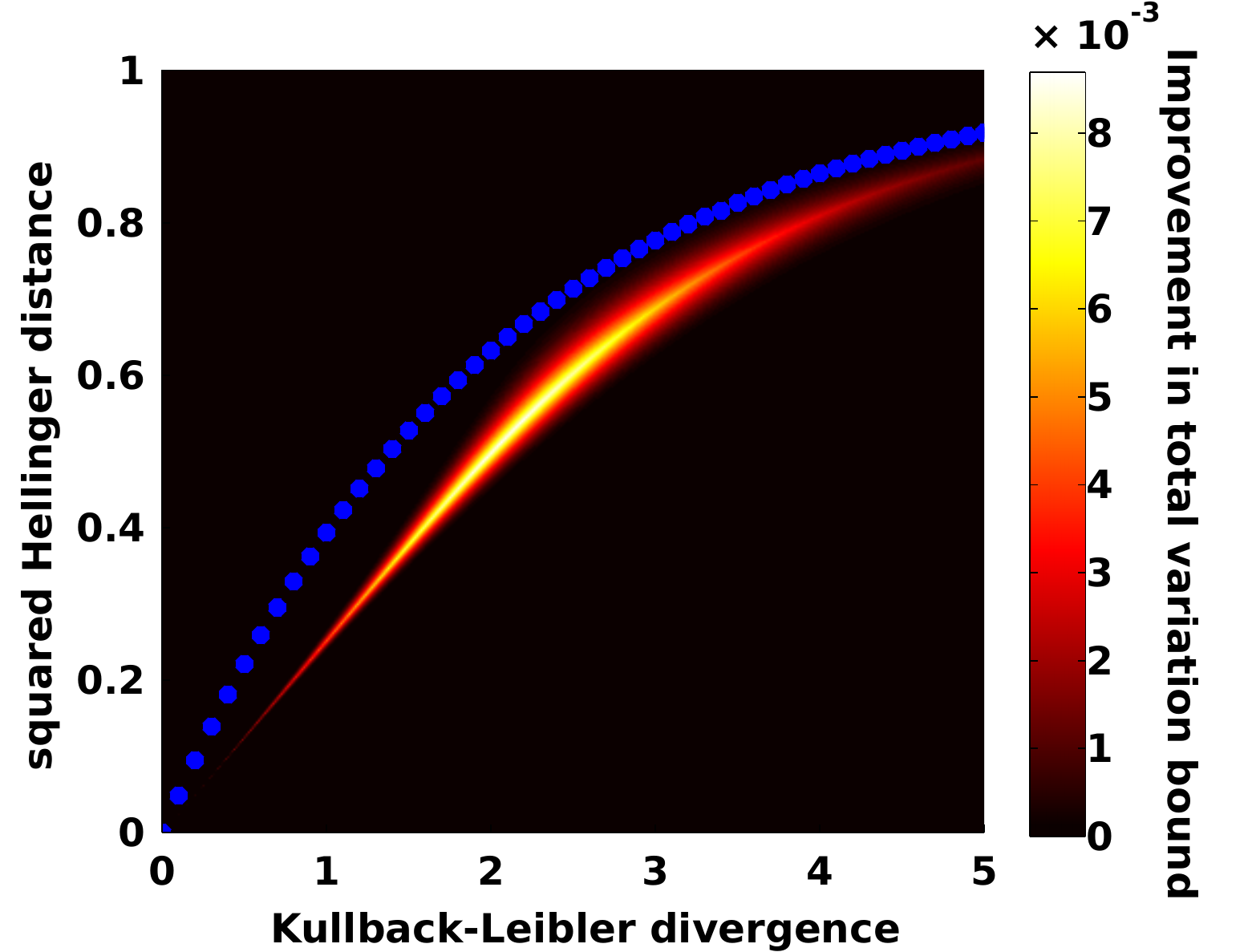}
	\end{subfigure}
       \caption{A sharp inequality between squared Hellinger
           distance and Kullback-Leibler divergence bounds the support
           of the ridge. The upper panel displays a sharp inequality
         between squared Hellinger and Kullback-Leibler divergence.
         The height of each blue dot represents the optimal value
         $A^H_{KL}(K)$ with a different constraint, $K$, on the
         Kullback-Leibler divergence. The 
         lower panel shows the same blue curve overlaid on Figure
         \ref{improvement}.  Observe that the region with positive
         improvement is bounded by the blue curve from the upper panel.} 
       \label{fig:hkl}
\end{figure}
\end{example}

\begin{example}\label{numtight}
Consider maximizing the squared Hellinger distance between $P$ and $Q$
with the total variation between $P$ and $Q$, $V(P,Q)$, bounded by
$V$. In other words, we consider the special case of the
problem~\eqref{finiteA} for $m = 1$, $f(x) = (\sqrt{x} - 1)^2/2$ and
$f_1(x) = |x-1|/2$. This is a special case of the problem we
considered in section \ref{tightness} where we proved that $A_2(V) <
A_3(V)$ for all $V \in (0, 1)$. Here we confirm this fact numerically. 

We compute both the quantities $A_2(V)$ and $A_3(V)$ by a gridded
search over pairs of probabilities satisfying the constraint in
$\Ps_2$ and $\Ps_3$ respectively. These functions are plotted in
Figure~\ref{htv}. Each red triangle in Figure \ref{htv} shows $A_3(V)$
for a different $V$. Each point in the dotted blue line shows $A_2(V)$
for a different $V$.  It is evident that the inequality $A_2(V) <
A_3(V)$ holds for all $V \in (0, 1)$. In other words, when we restrict the
constraint set to probability measures in $\Ps_2$, the maximum
Hellinger distance is strictly smaller for all $V \in (0,
1)$. Therefore, Theorem~\ref{main} is in general tight and cannot be
improved. 

Note also that the plot $A_3(V)$ agrees with the form $A_3(V) = A(V) =
V(f(0) + f'(\infty)) = V$ derived in Section~\ref{tightness}. This
gives rise to the sharp inequality $H^2(P, Q) \leq V(P, Q)$ which is
again attributed to~\cite{LeCam:86book}. 

\begin{figure}[h]
\centering
\includegraphics[width =4.5 in ]{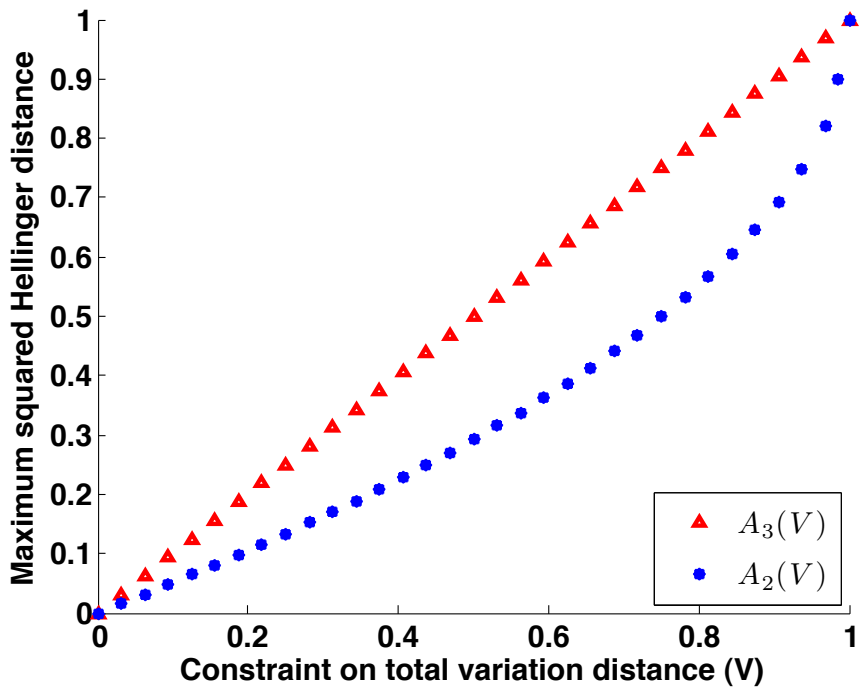}
\caption{Three point measures strictly improve on two point measures.
  Each red triangle
  shows $A_3(V)$ computed by a gridded search over pairs of
  probability measures in $\Ps_3$. Each blue dot shows $A_2(V)$
  computed by a gridded search over pairs of probability measures in
  $\Ps_2$. The simulation over three point measures is exactly a
 straight line with slope one--agreeing with Le Cam's bound $H^2 \le
 V$.  And $A_2(V) < A_3(V)$ for all $V\in (0,1)$.}  
\label{htv}
\end{figure}
\end{example}

\begin{example}
The capacitory discrimination between two probability measures $P$ and
$Q$ is defined by 
\begin{equation*}
C(P, Q) = D_{KL}\left(P||\frac{P+Q}{2}\right) +
D_{KL}\left(Q||\frac{P+Q}{2}\right). 
\end{equation*}
It is easy to check that $C(P, Q)$ is an $f$-divergence that
corresponds to the convex function: 
\begin{equation}\label{capd}
 x \log x - (x+1) \log(x+1) + 2\log 2. 
\end{equation}
The triangular discrimination $\Delta(P, Q)$  is another
$f$-divergence that corresponds to the convex function
\begin{equation}\label{trd}
 \frac{(x-1)^2}{x+1}.
\end{equation}
Tops{\o}e proved the following inequality between these two
$f$-divergences \cite{TopsoeCapDiv}: 
\begin{equation}\label{TopIneq}
\frac 1 2 \Delta(P, Q) \le C(P, Q) \le (\log 2) \Delta (P, Q).
\end{equation}
Let us investigate here the sharpness of these inequalities. Let 
\begin{equation*}
  A(D_1) := \sup \left\{C(P, Q) : \Delta(P, Q) \leq D_1 \right\}
\end{equation*}
and 
\begin{equation*}
  B(D_1) := \inf \left\{C(P, Q) : \Delta(P, Q) \geq D_1 \right\}. 
\end{equation*}
We solved the optimization problems~\eqref{finiteA} and~\eqref{finiteB}
for $m = 1$, $f(x)$ given by~\eqref{capd} and $f_1(x)$ given
by~\eqref{trd} by a gridded search. The resulting solutions for
$A(D_1)$ and $B(D_1)$ are plotted in Figure~\ref{ABtopsoe}, with red
triangles corresponding to $A(D_1)$ and blue dots corresponding to
$B(D_1)$. We have also plotted the bounds given by~\eqref{TopIneq} in
Figure~\ref{ABtopsoe} with the green line corresponding to $(\log 2)
D_1$ and the blue line to $D_1/2$. It is clear from the figure that
the upper bound in~\eqref{TopIneq} is sharp while the lower bound is
not sharp. The sharp lower bound is given by $B(D_1)$. We are unaware
of an analytic formula for $B(D_1)$, but we conjecture that $B_2(D_1) =
B_3(D_1)$ because this equality holds numerically.  It may be possible
to use this fact to find an analytic formula for $B(D_1)$. 

\begin{figure}[h]
\centering
\includegraphics[width = 4.5 in ]{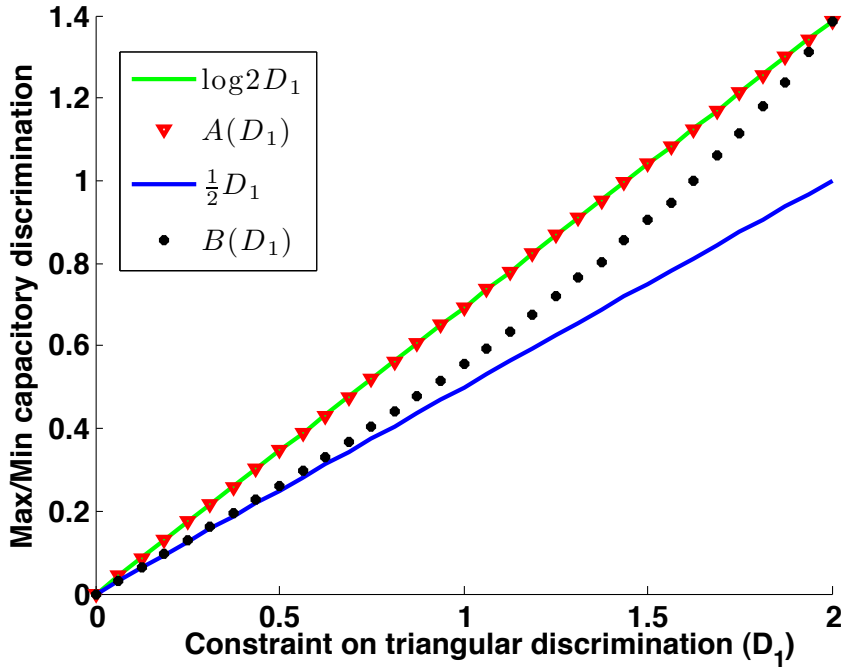}
\caption{The green line with slope $\log 2$ and the blue line with slope $\frac 1 2$ trace the bounds in \eqref{TopIneq}, while the red triangles and the black dots display $A(D_1)$ and $B(D_1)$ respectively.}
\label{ABtopsoe}
\end{figure}
\end{example}

\section*{Acknowledgment}
Adityanand Guntuboyina is extremely thankful to David Pollard for many
stimulating discussions. We are also grateful to an anonymous referee
for giving us Theorem 4.1 and its simple proof and for indicating that
it can be used to easily prove a weaker version of our main theorem.

\bibliographystyle{plainnat}
\bibliography{AG}

\end{document}